\def\re{\mathrm Re\,}
\def\im{\mathrm Im\,}
\def\mod#1{\vert #1\vert}
\def\hherm#1{{\langle\!\langle #1 \rangle\!\rangle}}
\def\B{\mathbb B}
\def\R{\mathbb R}
\def\C{\mathbb C}
\def\D{\mathbb D}
\def\Z{\mathbb Z}
\def\Q{\mathbb Q}
\def\P{\mathbb P}
\def\H{\mathbb H}
\def\F{\mathbb F}
\def\herm#1{\langle #1\rangle}
\newenvironment{proof}{\noindent\normalsize {\sc Proof}:}{{\hfill \rule{1mm}{3mm}}}
\newtheorem{theorem}{Theorem}[section]
\newtheorem{co}{Corollary}[section]
\newtheorem{prop}{Proposition}[section]
\newtheorem{lemma}{Lemma}[section]
\newtheorem{rmk}{Remark}[section]
\date{}
\title{On bisectors in quaternionic hyperbolic space}
\author{Igor A.R. Almeida \\ 
{\small Departamento de Matem\'{a}tica} \\ 
{\small Universidade Federal de Minas Gerais} \\ 
{\small Belo Horizonte - MG - Brazil} \\ \\
Jaime L.O. Chamorro  \\ 
{\small Departamento de Matem\'{a}tica} \\ 
{\small Universidade Federal de Baia} \\
{\small  Salvador - Ba - Brazil}\\ \\
Nikolay Gusevskii\thanks{Corresponding author.} \\ 
{\small Departamento de Matem\'{a}tica} \\
{\small Universidade Federal de Minas Gerais} \\ 
{\small Belo Horizonte - MG - Brazil} \\ {\small e-mail: nikolay@mat.ufmg.br}}
\date{}
\begin{document}
\maketitle


\begin{abstract}

In this paper, we study a problem related to geometry of bisectors in quaternionic hyperbolic geometry. We develop some of the basic theory of bisectors in  quaternionic hyperbolic space ${\rm H^{n}_{\Q}}$.
In particular, we show that quaternionic bisectors enjoy various decompositions  by totally geodesic submanifolds of ${\rm H^{n}_{\Q}}$.  In contrast to complex hyperbolic
geometry, where bisectors admit only two types of decomposition  (described by Mostow and Goldman), we show that in the quaternionic case  geometry of bisectors is more rich.
The main purpose of the paper is to describe an infinite family of different decompositions of bisectors  in ${\rm H^{n}_{\Q}}$ by totally geodesic submanifolds of ${\rm H^{n}_{\Q}}$ isometric to complex
hyperbolic space ${\rm H^{n}_{\C}}$ which we call the fan decompositions.  Also, we derive a formula for the orthogonal projection onto totally geodesic submanifolds in ${\rm H^{n}_{\Q}}$ isometric to  ${\rm H^{n}_{\C}}$. Using this, we introduce a new class of hypersurfaces in ${\rm H^{n}_{\Q}}$, which we call complex hyperbolic packs in ${\rm H^{n}_{\Q}}$. We hope that the complex hyperbolic packs will be useful for constructing fundamental polyhedra for discrete groups of isometries of quaternionic hyperbolic space.
\end{abstract}

\quad {\sl MSC:} 32H20; 20H10; 22E40; 57S30; 32G07; 32C16

\quad {\sl Keywords:} Quaternionic hyperbolic space; bisectors.

\section*{Introduction}

\medskip

It is well known that the rank one symmetric spaces of non-compact
type are real, complex and quaternionic hyperbolic spaces, or a Cayley hyperbolic plane. In order to construct discrete groups of isometries in these geometries, one needs an appropriate
notion of polyhedra, which becomes non-trivial in the absence totally geodesic hypersurfaces from which to form  a polyhedron's faces
in spaces with non-constant sectional curvature. This happens in complex and quaternionic hyperbolic geometries, and in geometry of the Cayley hyperbolic plane. Since the faces of
Dirichlet fundamental polyhedra are in bisectors, it is natural to use bisectors as the building blocks for polyhedra in  these geometries. This idea goes back to Giraud \cite{Gir} and was developed further by Mostow \cite{Mos} and Goldman  \cite{Gol}, (see also \cite{AGG}, \cite{FP}, \cite{GuP1}, \cite{GuP2}, \cite{DPP} for other examples of fundamental domains bounded by bisectors).

\medskip

Therefore, it is necessary to understand the geometric  structure of such hypersurfaces. In complex hyperbolic geometry, it was done by Mostow \cite{Mos} and Goldman  \cite{Gol}. They showed that bisectors in complex hyperbolic space ${\rm H^{n}_{\C}}$ admit a decomposition into complex hyperplanes and into totally real totally geodesic submanifolds (meridian decomposition). Goldman also proved that these decompositions are unique. 

\medskip

In this paper, we will show that some basic results about bisectors from complex hyperbolic
geometry carry over  to the quaternionic case. But, it will be shown that the geometry of bisectors in quaternionic hyperbolic space is more rich.
First, we prove an analogue
of the Mostow decomposition of bisectors in ${\rm H^{n}_{\Q}}$ and its generalization. Then we  show that any bisector in quaternionic hyperbolic space ${\rm H^{n}_{\Q}}$ is a union of totally geodesic submanifolds of ${\rm H^{n}_{\Q}}$
isometric to ${\rm H^{n}_{\C}}$ intersecting in a common point. In some sense, this is an analogue of the Goldman meridian decomposition. We call such decompositions the fan decompositions. We will show that such decompositions are not unique. The existence of fan decompositions implies, in particular, that any bisector in quaternionic hyperbolic space is star-like with respect to any point in its real spine.  Also, we derive a formula for the orthogonal projection onto totally geodesic submanifolds in ${\rm H^{n}_{\Q}}$ isometric to  ${\rm H^{n}_{\C}}$. Using this, we introduce a new class of hypersurfaces in ${\rm H^{n}_{\Q}}$, which we call complex hyperbolic packs in ${\rm H^{n}_{\Q}}$. The construction of complex hyperbolic packs in ${\rm H^{n}_{\Q}}$ in somewhat similar to the construction of packs introduced by Parker-Platis, see \cite{PP}, in complex hyperbolic geometry. We hope that the complex hyperbolic packs will be useful for constructing fundamental polyhedra for discrete groups of isometries of quaternionic hyperbolic space.

\medskip

The paper is organized as follows. In Section 1,  we summarize some basic results about geometry of quaternionic hyperbolic space.  In Section 2, we study geometry of bisectors in quaternionic hyperbolic space. In Section 3, we derive a formula for the orthogonal projection onto totally geodesic submanifolds of ${\rm H^{n}_{\Q}}$ isometric to  ${\rm H^{n}_{\C}}$ and introduce a new class of hypersurfaces in ${\rm H^{n}_{\Q}}$, which we call complex hyperbolic packs in ${\rm H^{n}_{\Q}}$.

\section{Preliminaries}

\noindent In this section, we recall some basic results  related to quarternions and geometry of projective and hyperbolic spaces.

\subsection{Quaternions}

\noindent First, we recall some basic facts about the quaternions we need. The quaternions $\Q$ are the $\R$-algebra generated by the symbols $i,j,k$ with the relations

$$ l^2=j^2=k^2=-1,  \ \ \  ij=-ji=k, \ \ \  jk=-kj=1, \ \ \  ki=-ik=j.$$

\medskip

So, $\Q$ is a skew field and a 4-dimensional division algebra over the reals.

\medskip

Let $a \in \Q$. We write $a=a_0 + a_1 i + a_2 j + a_3 k$, $a_i \in \R$,
then by definition

$$\bar{a}= a_0 - a_1 i - a_2 j - a_3 k, \ \ \ {\rm Re} \ a=a_0, \ \ \ {\rm Im} \ a=a_1 i + a_2 j + a_3 k.$$

\medskip

Note that, in contrast with the complex numbers, ${\rm Im} \ a$ is not a real number (if $a_i\neq 0$ for some $i=1,2,3$), and that conjugation obeys the rule

$$\overline{ab} = \bar{b}\bar{a}.$$

\medskip

Also, we define $\mod{a}=\sqrt{a\bar{a}}$. We have that if $a\neq 0$ then $a^{-1}=\bar{a}/\mod{a}^2.$

\medskip

In what follows, we will identify the reals  numbers $\R$ with ${\R} 1$ and the complex numbers $\C$ with the subfield of $\Q$ generated over $\R$ by $1$ and $i$.

\medskip

Two quaternions $a$ and $b$ are called {\sl similar} if there exists $\lambda \neq 0$ such that $a=\lambda b \lambda^{-1}.$
By replacing $\lambda$ by $\tau=\lambda/\mod{\lambda}$, we may always assume $\lambda$ to be unitary.

\medskip

The following proposition was proved in \cite{Bre}.

\begin{prop}
\label{qsim} 
Two quaternions $a$ and $b$ are similar if and only if ${\rm Re} \ a ={\rm Re} \ b$ and
$\mod{a} = \mod{b}$. Moreover, every similarity class contains a complex number, unique up to conjugation.
\end{prop}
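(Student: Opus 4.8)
The plan is to prove the two implications separately and then read off the ``moreover'' clause, using only two elementary facts about $\Q$: both ${\rm Re}$ and $\mod{\cdot}$ are invariant under conjugation, and the unit quaternions act transitively, by conjugation, on each sphere of purely imaginary quaternions of fixed modulus. \emph{Necessity} is quick. If $a=\lambda b\lambda^{-1}$ with $\lambda\neq 0$, then multiplicativity of the modulus (immediate from $\overline{xy}=\bar y\bar x$, since $\mod{xy}^2=xy\,\overline{xy}=xy\bar y\bar x=\mod{x}^2\mod{y}^2$) gives $\mod{a}=\mod{\lambda}\,\mod{b}\,\mod{\lambda^{-1}}=\mod{b}$; and since a direct computation with real and imaginary parts shows ${\rm Re}(xy)={\rm Re}(yx)$ for all $x,y\in\Q$, taking $x=\lambda$ and $y=b\lambda^{-1}$ yields ${\rm Re}\,a={\rm Re}(\lambda^{-1}\lambda b)={\rm Re}\,b$.

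For \emph{sufficiency}, assume ${\rm Re}\,a={\rm Re}\,b=:t$ and $\mod{a}=\mod{b}=:r$, and set $u={\rm Im}\,a$, $v={\rm Im}\,b$, $s=\sqrt{r^2-t^2}$, so that $\mod{u}=\mod{v}=s$ and $u^2=v^2=-s^2$. If $s=0$ then $a=b=t$ and there is nothing to prove, so suppose $s>0$. It is enough to produce $\lambda\neq 0$ with $\lambda v\lambda^{-1}=u$, since then $\lambda b\lambda^{-1}=\lambda(t+v)\lambda^{-1}=t+u=a$. If $u\neq -v$, I would take $\lambda=s^2-uv$: a one-line computation using $u^2=v^2=-s^2$ gives $\lambda v=s^2(u+v)=u\lambda$, and $\lambda\neq 0$ because $\lambda=0$ would make $uv=s^2$ a positive real, which forces $u$ and $v$ parallel and then $u=-v$. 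If $u=-v$, I would instead take for $\lambda$ any unit purely imaginary quaternion orthogonal to $v$; orthogonal purely imaginary quaternions anticommute and $\lambda^{-1}=-\lambda$, so $\lambda v\lambda^{-1}=-v=u$. (Alternatively, both cases follow at once from the classical surjection of the group of unit quaternions onto ${\rm SO}(3)$ realized by conjugation on the purely imaginary quaternions.)

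For the \emph{``moreover''} clause, applying sufficiency to $a$ and to the complex number $b=a_0+s\,i$, where $a=a_0+{\rm Im}\,a$ and $s=\mod{{\rm Im}\,a}$ — this $b$ has the same real part $a_0$ and the same modulus $\sqrt{a_0^2+s^2}=\mod{a}$ as $a$ — shows every $a\in\Q$ is similar to a complex number (here one uses that similarity is an equivalence relation: $\lambda=1$ for reflexivity, $\lambda\mapsto\lambda^{-1}$ for symmetry, composition for transitivity). If $z=x+yi$ and $z'=x'+y'i$ in $\C$ are similar, necessity gives $x=x'$ and $x^2+y^2=x'^2+y'^2$, hence $y'=\pm y$, i.e.\ $z'=z$ or $z'=\bar{z}$. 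The only genuine obstacle is producing the conjugating quaternion in the sufficiency step; I expect the identity $\lambda v=u\lambda$ for $\lambda=\mod{v}^2-uv$, supplemented by the anticommuting trick in the antipodal case, to be the cleanest route, with the rest being routine.
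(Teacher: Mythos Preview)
Your proof is correct. The paper itself does not give a proof of this proposition; it simply cites \cite{Bre}. Your self-contained argument --- the trace identity ${\rm Re}(xy)={\rm Re}(yx)$ and multiplicativity of $\mod{\cdot}$ for necessity, the explicit conjugator $\lambda=s^2-uv$ (with the anticommutation trick in the antipodal case $u=-v$) for sufficiency, and the complex representative $a_0+\mod{{\rm Im}\,a}\,i$ for the ``moreover'' clause --- is a standard and clean route, essentially the one found in most textbook treatments and in Brenner's original paper.
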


\begin{co} \label{qua1} Any quaternion $a$ is similar to a unique complex number $b=b_0 +b_1 i $ such that $b_1 \geq 0.$
\end{co}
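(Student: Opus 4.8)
The statement is a direct consequence of Proposition \ref{qsim}, and the plan is simply to extract existence and uniqueness from the two criteria given there (equality of real parts and of moduli). First I would handle existence: Proposition \ref{qsim} already tells us that the similarity class of $a$ contains a complex number $c = c_0 + c_1 i$ with $c_0, c_1 \in \R$. Since quaternionic conjugation fixes the real part and preserves the modulus, $\bar c = c_0 - c_1 i$ has ${\rm Re}\,\bar c = {\rm Re}\, c$ and $\mod{\bar c} = \mod{c}$, so by Proposition \ref{qsim} it is similar to $c$, hence to $a$. Exactly one of $c$ and $\bar c$ has non-negative coefficient of $i$ — and both do in the degenerate case $c_1 = 0$, where $c$ is real — so one takes $b$ to be that one. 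This is the only point where a word of care is needed.

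For uniqueness, suppose $b = b_0 + b_1 i$ and $b' = b_0' + b_1' i$ with $b_1, b_1' \geq 0$ are both similar to $a$. By Proposition \ref{qsim}, $b_0 = {\rm Re}\, a = b_0'$ and $\mod{b}^2 = \mod{a}^2 = \mod{b'}^2$, so $b_0^2 + b_1^2 = b_0'^2 + b_1'^2$ forces $b_1^2 = b_1'^2$; since $b_1, b_1' \geq 0$ this gives $b_1 = b_1'$ and hence $b = b'$. In fact, writing $a = a_0 + a_1 i + a_2 j + a_3 k$, the same computation yields the explicit representative $b = a_0 + \sqrt{a_1^2 + a_2^2 + a_3^2}\, i$, since $\mod{a}^2 - ({\rm Re}\, a)^2 = a_1^2 + a_2^2 + a_3^2 \geq 0$.

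There is no real obstacle here: all the substance is already contained in Proposition \ref{qsim}, and the corollary is a matter of pinning down the sign ambiguity in "unique up to conjugation" by the normalization $b_1 \geq 0$.
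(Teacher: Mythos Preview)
Your proof is correct and is exactly the intended argument: the paper states this as an immediate corollary of Proposition \ref{qsim} without giving a separate proof, and your derivation simply unpacks that inference (existence from the complex representative in the similarity class, uniqueness from the equality of real part and modulus together with the sign normalization $b_1 \geq 0$).
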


Also, this proposition implies that every quaternion is similar to its conjugate.

\medskip

\noindent {\bf Example:}  $j z j^{-1} = \bar{z}$ for all $z \in \C$.

\medskip

We say that $a\in \Q$ is {\sl purely imaginary} if ${\rm Re}(a)=0 $. Let us suppose that $a$ is purely imaginary and $|a|=1$. Then $a^2=-1$. This implies that the real span of $1$ and $a$ is a subfield of $\Q$ isomorphic
to the field of complex numbers. We denote this subfield by $\C(a)$. It is easy to prove that any subfield of $\Q$ containing real numbers and isomorphic to the field of complex numbers is of the form $\C(a)$
for some $a$ purely imaginary with $|a|=1$.

\medskip

The following was proved in \cite{CGr}.

\medskip

\begin{prop}\label{qua2}Let $a$ be as above. Then the centralizer of $a$ is $\C(a)$.
\end{prop}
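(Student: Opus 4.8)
The plan is to prove both inclusions for the centralizer $Z(a)=\{x\in\Q:\ xa=ax\}$. One direction is immediate: by construction $\mathbb{C}(a)=\R 1\oplus\R a$ is a commutative subfield of $\Q$, hence $\mathbb{C}(a)\subseteq Z(a)$. All the work is in the reverse inclusion.

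For that inclusion I would first put $a$ into a normal form by completing it to a quaternionic basis. Since $a$ is a unit imaginary quaternion, choose a unit imaginary quaternion $b$ orthogonal to $a$ (with respect to the standard Euclidean structure on $\Q\cong\R^4$, for which $\{1,i,j,k\}$ is orthonormal) and set $c:=ab$. Orthogonal imaginary quaternions anticommute, because $ab+ba=2\,{\rm Re}(ab)$ whenever $a,b$ are imaginary; moreover $c=ab$ is again a unit imaginary quaternion, orthogonal to both $a$ and $b$. From $a^2=-1$, $ba=-ab$, and $c=ab$ one reads off $ab=-ba=c$, $ac=a^2b=-b$, and $ca=a(ba)=-a^2b=b$. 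In particular $\{1,a,b,c\}$ is an $\R$-basis of $\Q$ obeying the same multiplication table as $\{1,i,j,k\}$.

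Now take an arbitrary $x=x_0+x_1a+x_2b+x_3c\in\Q$ with $x_\ell\in\R$. Using the relations above,
$$ax=-x_1+x_0a-x_3b+x_2c,\qquad xa=-x_1+x_0a+x_3b-x_2c,$$
so $ax=xa$ forces $x_2c-x_3b=-(x_2c-x_3b)$, i.e. $x_2=x_3=0$ since $b$ and $c$ are linearly independent over $\R$. Hence $x=x_0+x_1a\in\mathbb{C}(a)$, giving $Z(a)\subseteq\mathbb{C}(a)$ and completing the proof.

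The only point requiring genuine care is the normal-form step — checking that a unit imaginary quaternion can always be completed to a triple obeying the $i,j,k$-relations; once that is in place the centralizer condition collapses to a one-line linear computation. One could instead bypass the explicit basis by a structural argument: $\mathbb{C}(a)$ is a two-dimensional simple subalgebra of the central simple $\R$-algebra $\Q$, so the double centralizer theorem gives $\dim_\R Z(a)=(\dim_\R\Q)/(\dim_\R\mathbb{C}(a))=2$, and the inclusion $\mathbb{C}(a)\subseteq Z(a)$ then forces equality.
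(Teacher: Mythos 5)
Your proof is correct. Note that the paper does not prove this proposition at all; it simply cites Chen--Greenberg \cite{CGr}, so there is no internal argument to compare against. Your argument is a clean, self-contained substitute: completing the unit imaginary quaternion $a$ to a triple $\{a,b,c=ab\}$ satisfying the $i,j,k$ relations is legitimate (orthogonal imaginary quaternions anticommute since $ab+ba=2\,{\rm Re}(ab)$, and $c=ab$ is unit, imaginary and orthogonal to $a$ and $b$), and the centralizer condition then reduces to the stated coordinate computation forcing $x_2=x_3=0$. The alternative you sketch via the double centralizer theorem ($\dim_\R Z(a)=\dim_\R\Q/\dim_\R\C(a)=2$, combined with $\C(a)\subseteq Z(a)$) is also valid and arguably the quickest structural route; the explicit basis argument has the advantage of being elementary and of exhibiting the normal form of $a$, which is in the same spirit as the similarity statements (Proposition \ref{qsim} and Corollary \ref{qua1}) used elsewhere in the paper.
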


\medskip

More generally, for any $\lambda \in \Q$, let $\C(\lambda)$ also denote the real span of $1$ and $\lambda$.

\begin{prop} \label{qua3} Let $\lambda \in \Q \setminus \R$. Then the centralizer of $\lambda$ is $\C(\lambda)$.
\end{prop}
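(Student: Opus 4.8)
The plan is to reduce the general statement to Proposition \ref{qua2}, which already handles the case of an imaginary unit. Write $\lambda = \lambda_0 + {\rm Im}\,\lambda$ with $\lambda_0 = {\rm Re}\,\lambda \in \R$. Since $\lambda \notin \R$, the imaginary part ${\rm Im}\,\lambda$ is nonzero, so I set $r = \mod{{\rm Im}\,\lambda} > 0$ and $a = ({\rm Im}\,\lambda)/r$. Then $a$ is imaginary with $\mod{a} = 1$, hence $a^2 = -1$, and $\lambda = \lambda_0 + r\,a$.

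First I would observe that $\C(\lambda) = \C(a)$ as subsets of $\Q$: both are the real span of $1$ and $a$, because the relation $\lambda = \lambda_0\cdot 1 + r\,a$ with $r \neq 0$ shows that $\{1,\lambda\}$ and $\{1,a\}$ span the same $2$-dimensional real subspace, and this subspace is the subfield $\C(a)$ precisely because $a^2 = -1$. In particular this also confirms that $\C(\lambda)$ is a field isomorphic to $\C$ when $\lambda \notin \R$, consistent with the notation fixed just before the statement.

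Next, for an arbitrary $\mu \in \Q$ I would compute, using that $\lambda_0$ is central, $\mu\lambda - \lambda\mu = r(\mu a - a\mu)$. Since $r \neq 0$, this shows that $\mu$ commutes with $\lambda$ if and only if $\mu$ commutes with $a$. By Proposition \ref{qua2} the centralizer of $a$ is exactly $\C(a)$, and by the previous step $\C(a) = \C(\lambda)$; hence the centralizer of $\lambda$ is $\C(\lambda)$, as claimed.

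There is essentially no real obstacle here: the argument is a one-step reduction to Proposition \ref{qua2}. The only point needing a line of care is the identification $\C(\lambda) = \C(a)$, i.e., that normalizing the imaginary part of $\lambda$ does not change the real plane it spans together with $1$; this is immediate from $r \neq 0$.
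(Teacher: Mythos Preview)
Your argument is correct. The paper does not supply an explicit proof of this proposition; it is stated immediately after Proposition~\ref{qua2} as the evident generalization, with $\C(\lambda)$ redefined as the real span of $1$ and $\lambda$, and your reduction---normalizing the imaginary part of $\lambda$ to an imaginary unit $a$, observing $\C(\lambda)=\C(a)$, and invoking Proposition~\ref{qua2}---is exactly the intended one-line passage from the special case to the general one.
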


\subsection{Hyperbolic spaces}

\noindent In this section, we discuss two models for the hyperbolic spaces, its isometry group and totally geodesics submanifolds.

\subsubsection{Projective Model}

\noindent We denote by $\F$ one of the real division algebras $\R$, $\C$, or $\Q$.
Let us write $\F ^{n+1}$ for a right $\F$ - vector space of dimension $n+1$.
The $\F$- projective space $\P\F^{n}$ is the manifold of right $\F$-lines in
$\F^{n+1}$. Let $\pi$ denote a natural projection
from $\F^{n+1} \setminus \{0\}$ to the projective space $\P\F^{n}$.

\medskip

Let $\F^{n,1}$ denote a $(n+1)$-dimensional $\F$-vector space
equipped with a Hermitian form $\Psi= \herm{-,-}$ of signature $(n,1)$.
Then there exists a (right) basis in $\F ^{n+1}$ such that the Hermitian
product is given by
$\herm{v,w}=v^*J_{n+1}w,$ where $v^*$ is the Hermitian transpose of
$v$ and  $J_{n+1}=(a_{ij})$ is the $(n+1)\times (n+1)$-matrix with
$a_{ij}=0$ for all $i \neq j$, $a_{ii}=1$ for all $i=1, \ldots, n$,
and $a_{ii}=-1$ when $i=n+1$.

\medskip

That is,
$$\herm{v,w}=\bar{v}_1 w_1+\ldots +  \bar{v}_nw_n -\bar{v}_{n+1}w_{n+1},$$
where $v_i$ and $w_i$ are coordinates of $v$ and $w$ in this basis. We call such a basis in $\F^{n,1}$ an {\sl orthogonal basis} defined by
a Hermitian form $\Psi= \herm{-,-}$.

\medskip

Let $V_{-}, V_0, V_{+}$ be the subsets of $\F^{n,1} \setminus \{0\}$
consisting of vectors where $\herm{v,v}$ is negative, zero, or
positive respectively. Vectors in $V_0$ are called {\sl null} or
{\sl isotropic}, vectors in $V_{-}$ are called {\sl negative}, an
vectors in  $V_{+}$ are called {\sl positive}. Their projections to $\P\F^{n}$
 are called {\sl isotropic}, {\sl negative}, and {\sl
positive} points respectively.

\medskip

The projective model of {\sl  hyperbolic space} ${\rm H^{n}_{\F}}$ is
the set of negative points in  $\P\F^{n}$, that is,
${\rm H^{n}_{\F}}=\pi(V_{-}).$

\vspace{2mm}

We will consider ${\rm H^{n}_{\F}}$ equipped with the Bergman metric \cite{CGr}:

$$d(p,q)=\cosh^{-1}\{|\Psi(v,w)| [\Psi(v,v)\Psi(w,w)]^{-1/2}\},$$

where $p,q \in{\rm H^{n}_{\F}}$, and $\pi(v)=p, \pi(w)=q$.

\medskip

The boundary $\partial
{\rm H^{n}_{\F}}=\pi(V_{0})$ of ${\rm H^{n}_{\F}}$  is the sphere formed by
all isotropic  points.

\medskip

Let ${\rm U}(n,1; \F)$ be the unitary group
corresponding to this Hermitian form $\Phi$. If $g \in {\rm U}(n,1; \F)$, then
$g(V_{-})=V_{-}$ and $g(v\lambda)=(g(v))\lambda$, for all $\lambda \in
\F$. Therefore, ${\rm U}(n,1; \F)$ acts on $\P\F^{n}$ leaving ${\rm H^{n}_{\F}}$
invariant.

\medskip

The group ${\rm U}(n,1; \F)$ does not act effectively on ${\rm H^{n}_{\F}}$. The kernel of this
action is the center ${\rm Z}(n,1; \F)$. Thus, the projective group ${\rm PU}(n,1; \F)={\rm U}(n,1; \F)/{\rm Z}(n,1; \F)$ acts
effectively. The center $\Z(n,1, \F)$ in ${\rm U}(n,1; \F)$ is $\{\pm {\rm E}\}$ if $\F=\R$ or $\Q$, and is the circle group $\{\lambda {\rm E} : \mod \lambda =1\}$
if $\F=\C$. Here ${\rm E}$ is the identity  transformation of  $\F^{n,1}$.

\medskip

It is well-known, see for instance \cite{CGr}, that ${\rm PU}(n,1; \F)$
acts transitively on ${\rm H^{n}_{\F}}$ and doubly transitively on
$\partial{\rm H^{n}_{\F}}$.

\medskip

We remark that
\begin{itemize}
\item if $\F=\R$ then  ${\rm H^{n}_{\F}}$ is a real hyperbolic space ${\rm H^{n}_{\R}}$,

\item if $\F=\C$ then  ${\rm H^{n}_{\F}}$ is a complex hyperbolic space ${\rm H^{n}_{\C}}$,

\item if $\F=\Q$ then  ${\rm H^{n}_{\F}}$ is a quaternionic  hyperbolic space ${\rm H^{n}_{\Q}}$.
\end{itemize}

\medskip

It is easy to show \cite{CGr} that ${\rm H^{1}_{\Q}}$ is isometric to ${\rm H^{4}_{\R}}$.

\subsubsection{The ball model}\label{ball}

\noindent In this section, we consider the space $\F^{n,1}$ equipped by an orthogonal basis
$$e=\{e_1, \ldots, e_n, e_{n+1} \}.$$

For any $v \in \F^{n,1}$, we write $v=(z_1, \ldots, z_n, z_{n+1})$, where $z_i$, $i=1,\ldots, n+1$ are coordinates
of $v$ in this basis.

\medskip

If $v=(z_1, \ldots, z_n, z_{n+1}) \in V_{-}$, the condition $ \herm{v,v} < 0$ implies that $z_{n+1} \neq 0.$ Therefore, we may define a set of coordinates $w=(w_1, \ldots, w_n)$ in ${\rm H^{n}_{\F}}$ by $w_i(\pi(z))=z_i z_{n+1}^{-1}.$
In this way  ${\rm H^{n}_{\F}}$ becomes identified with the ball

$$\B=\B(\F)= \{w=(w_1, \ldots, w_n) \in \F^n : \Sigma_{i=1}^n |w_i|^2 < 1\}.$$

\medskip

With this identification the map $\pi: V_{-} \rightarrow {\rm H^{n}_{\F}}$ has the coordinate representation $\pi(z)=w$, where
$w_i= z_i z_{n+1}^{-1}$.

\medskip

Let $\F^{n}$ denote a $n$-dimensional $\F$-vector space
equipped with the standard positive definite Hermitian form.
Then there exists a (right) basis in $\F ^n$ such that the Hermitian
product $\hherm{z,w}$ of two vectors $z,w$ in $\F^n$  is given by

$$\hherm{z,w}= \overline{z}_1 w_1 + \ldots, + \overline{z}_n w_n,$$
where $z_i$, $w_i$, $i=1,\ldots, n$ are coordinates of $z$ and $w$ in this basis.

\medskip

Using this, we have that 

$$\B=\B(\F)= {w=(w_1, \ldots, w_n) \in \F^n :\hherm{w,w} <1 }.$$

\subsubsection{Totally geodesic submanifolds}\label{tgsm}

We will need the following result, see \cite{CGr}, which describes all totally
geodesic submanifolds of ${\rm H^{n}_{\F}}$.

\medskip

Let $F$ be a subfield of $\F$.  An $F$-{\sl unitary subspace} of $\F^{n,1}$ is an $F$-subspace of $\F^{n+1}$
in which the Hermitian form $\Phi$ is $F$-valued. An $F$-{\sl hyperbolic subspace} of $\F^{n,1}$ is an $F$-unitary subspace in which
the Hermitian form $\Phi$ is non-degenerate and indefinite.

\medskip

\begin{prop}\label{ptgsm}
Let $M$ be a totally geodesic submanifold of ${\rm H^{n}_{\F}}$. Then either

\vspace{2mm}

(a) $M$ is the intersection of the projectivization of an $F$-hyperbolic subspace of $\F^{n,1}$ with ${\rm H^{n}_{\F}}$ for some
subfield $F$ of $\F$, or

\vspace{2mm}

(b) $\F=\Q$, and $M$ is a 3-dimensional  totally geodesic submanifold of a totally geodesic quaternionic line  ${\rm H^{1}_{\Q}}$  in ${\rm H^{n}_{\Q}}$.
\end{prop}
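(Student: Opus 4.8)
\noindent The plan is to reduce the classification to linear algebra on a single tangent space, using \'E.~Cartan's theorem on totally geodesic submanifolds of a Riemannian symmetric space; it suffices to treat complete connected $M$, since any totally geodesic submanifold is an open piece of such a one. A complete connected totally geodesic submanifold through a point $p$ is determined by, and exists for, every \emph{Lie triple system} $V\subseteq T_p{\rm H^{n}_{\F}}$, i.e.\ every $\R$-subspace with $R_p(V,V)V\subseteq V$, where $R$ is the curvature tensor. Since ${\rm PU}(n,1;\F)$ acts transitively on ${\rm H^{n}_{\F}}$ I may take $p$ to be the origin $0$ of the ball model of \S\ref{ball}; there $T_0{\rm H^{n}_{\F}}$ is canonically $\F^n$, geodesics through $0$ are Euclidean diameters of $B$, and so $M$ is the trace $V\cap B$ of the $\R$-subspace $V=T_0M\subseteq\F^n$. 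Everything thus reduces to classifying the Lie triple systems $V\subseteq\F^n$ for the curvature tensor of ${\rm H^{n}_{\F}}$ at $0$ and then recognizing each of them as coming from an $F$-hyperbolic subspace (or, when $\F=\Q$, from a submanifold of a quaternionic line).

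I would then use the explicit form (see \cite{CGr}) of the curvature tensor at $0$, which in a suitable normalization reads
$$R(X,Y)Z=\herm{X,Z}Y-\herm{Y,Z}X+\sum_{a}\bigl(\herm{J_aX,Z}J_aY-\herm{J_aY,Z}J_aX+2\herm{J_aX,Y}J_aZ\bigr),$$
where $\herm{\cdot,\cdot}$ is the Euclidean form on $\F^n$ and the $J_a$ are right multiplication by the imaginary units of $\F$: no $J$-terms for $\F=\R$, one term $J_1$ (multiplication by $i$) for $\F=\C$, and three terms $J_1,J_2,J_3$ (multiplication by $i,j,k$) for $\F=\Q$. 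Setting $Y=X$ collapses this to $R(X,Y)X=\herm{X,X}Y-\herm{X,Y}X+3\sum_a\herm{J_aX,Y}J_aX$, which is the workhorse. For $\F=\R$ there are no $J$-terms, every subspace is a Lie triple system, and $V\cap B$ is the trace of the projectivization of the $\R$-hyperbolic subspace $V\oplus\R e_{n+1}\subseteq\R^{n,1}\subseteq\F^{n,1}$ --- case (a). For $\F=\C$ the collapsed identity, applied with $X\in V$, forces for each $X\in V$ either $J_1X\in V$ or $J_1X\perp V$; this yields an orthogonal splitting $V=V_0\oplus W$ with $V_0=V\cap J_1V$ a complex ($J_1$-invariant) subspace and $W$ totally real, in fact $J_1W\perp V$. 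The cross term then finishes it: for unit vectors $u\in V_0$ and $w\in W$ one computes $R(J_1u,w)u=-J_1w\in J_1W$, which lies in $V$ only if $w=0$; hence $W=0$ or $V_0=0$, so $V$ is complex or totally real, and accordingly $V\cap B$ is the trace of the projectivization of a $\C$- resp.\ $\R$-hyperbolic subspace of $\C^{n,1}$ --- case (a).

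The substantive case is $\F=\Q$, and this is where I expect the real effort to be, since the three operators $J_a$ do not commute ($J_aJ_b=\pm J_c$) and the "mixed" subspaces must be excluded. I would set $V_\Q=\{v\in V:J_av\in V\ \text{for }a=1,2,3\}$, the largest quaternionic subspace inside $V$, write $V=V_\Q\oplus V'$ orthogonally, and run the collapsed curvature identity again --- now testing it against $Y=v$, $Y=J_av$ and $Y=w$ for $v\in V_\Q$, $w\in V'$, and keeping track of the products $J_aJ_b$. The outcome to be extracted from this bookkeeping is that either $V=V_\Q$ is a $\Q$-subspace (case (a) with $F=\Q$), or $V_\Q=0$ and then $V$ is totally real (case (a), $F=\R$), or $V$ is a $\C(\mu)$-subspace for a single unit imaginary $\mu$ and hence complex under $\C(\mu)\cong\C$ (case (a), $F=\C$), or $V$ lies in the tangent space $\Q v$ of a totally geodesic quaternionic line ${\rm H^{1}_{\Q}}$ through $0$ with $\dim_\R V\le3$. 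In that last case ${\rm H^{1}_{\Q}}\cong{\rm H^{4}_{\R}}$ has constant curvature, so every subspace of $\Q v$ is a Lie triple system: for $\dim_\R V\le2$ one is back in case (a) (a point, a geodesic, or --- since every real $2$-plane in $\Q v$ is a complex line --- a complex line), whereas $\dim_\R V=3$ gives precisely the $3$-dimensional submanifolds of type (b). Conversely every subspace occurring in (a) is manifestly a Lie triple system, and every $3$-dimensional totally geodesic submanifold of a quaternionic line is totally geodesic in ${\rm H^{n}_{\Q}}$, so (a) and (b) together exhaust all cases. The delicate step is the curvature bookkeeping in the quaternionic case that simultaneously rules out mixed subspaces and singles out the genuinely new family (b); as an alternative that may shorten it, one can instead use Propositions \ref{qua2} and \ref{qua3} together with the structure of point stabilizers in ${\rm PU}(n,1;\Q)$ to reduce to the action of a suitable reductive subgroup on $V$, but the curvature argument appears to be the most self-contained.
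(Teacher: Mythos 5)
You should first note that the paper itself offers no proof of this proposition: it is quoted directly from Chen--Greenberg \cite{CGr}, so there is no in-paper argument to compare with, and your proposal attempts strictly more than the paper does. Your overall strategy is the standard and correct one: pass to complete connected submanifolds, use Cartan's correspondence between totally geodesic submanifolds through a point and Lie triple systems $V\subseteq T_0{\rm H^{n}_{\F}}\cong\F^n$, and exploit the explicit curvature tensor of the $\F$-hyperbolic space at the origin of the ball model. Your treatment of $\F=\R$ is trivially fine, and your treatment of $\F=\C$ is sound: the pointwise dichotomy ``$J_1X\in V$ or $J_1X\perp V$'', the splitting $V=V_0\oplus W$, and the cross-term computation $R(J_1u,w)u=-J_1w\notin V$ do force $V$ to be either complex or totally real, which is exactly case (a). Your converse direction (every subspace listed in (a) or (b) is a Lie triple system, using that a quaternionic line is isometric to ${\rm H^{4}_{\R}}$ and hence of constant curvature, and that every real $2$-plane in $\Q v$ is a $\C(\mu)$-line) is also correct.

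The gap is that the quaternionic case --- the only case in which the exceptional family (b) appears, and the only part of the statement whose content goes beyond the complex case --- is asserted rather than proved. You state the intended outcome of the ``bookkeeping'' (either $V$ is a $\Q$-subspace, or totally real, or a $\C(\mu)$-subspace, or $V\subseteq\Q v$ with $\dim_\R V\le 3$), but with three anticommuting structures $J_1,J_2,J_3$ the collapsed identity only gives, for each $X\in V$ separately and each $a$, that $\herm{J_aX,Y}J_aX\in V$ for all $Y\in V$; from this one must still exclude genuinely mixed configurations (for instance a $\C(\mu)$-invariant piece orthogonally summed with a totally real piece, or subspaces in which different vectors are preserved by different imaginary units $\mu$), and show that a vector admitting some but not all of its images $J_aX$ inside $V$ can only occur when $V$ is confined to a single quaternionic line $\Q v$ --- which is precisely where the $3$-dimensional family (b) must emerge and nowhere else. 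This case analysis is the heart of the theorem and is substantially longer than the complex computation you did carry out; as written it is a promissory note. Either execute it explicitly (keeping track of the relations $J_aJ_b=\pm J_c$ in the cross terms, as you indicate), or do what the paper does and simply invoke the classification from \cite{CGr} (or the reflective/totally geodesic classification literature such as \cite{Leu}), in which case the rest of your write-up becomes an unnecessary partial reproof.
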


\medskip

From the last proposition follows that

\begin{itemize}
\item in the real hyperbolic space ${\rm H^{n}_{\R}}$ any totally geodesic submanifold  is isometric to ${\rm H^{k}_{\R}}, \ k=1, \ldots , n,$

\item in the  complex hyperbolic space ${\rm H^{n}_{\C}} $ any totally geodesic submanifod  is isometric to ${\rm H^{k}_{\C}},$ or to
${\rm H^{k}_{\R}},$ $\ k=1, \ldots , n,$

\item in the  quaternionic  hyperbolic space ${\rm H^{n}_{\Q}}$ any totally geodesic submanifold  is isometric to ${\rm H^{k}_{\Q}},$
 or to ${\rm H^{k}_{\C}},$ or to ${\rm H^{k}_{\R}},$
$k=1, \ldots , n,$ or to a 3-dimensional  totally geodesic submanifold of a totally geodesic quaternionic line  ${\rm H^{1}_{\Q}}$.
\end{itemize}

\medskip

In what follows we will use the following terminology:

\begin{itemize}
\item A totally geodesic submanifold of ${\rm H^{n}_{\Q}}$ isometric to ${\rm H^{1}_{\Q}}$ is called a {\sl quaternionic geodesic}.
\item A totally geodesic submanifold of ${\rm H^{n}_{\Q}}$ isometric to ${\rm H^{1}_{\C}}$ is called a {\sl  complex geodesic}.
\item A totally geodesic submanifold of ${\rm H^{n}_{\Q}}$ isometric to  ${\rm H^{k}_{\R}}$ is called a {\sl $\R^k$-plane.} 
\item A totally geodesic submanifold of ${\rm H^{n}_{\Q}}$ isometric to  ${\rm H^{2}_{\R}}$ is called a {\sl real plane}.

\end{itemize}

\medskip

It is clear that two distinct points in ${\rm H^{n}_{\Q}} \cup \partial
{\rm H^{n}_{\Q}}$  span a unique quaternionic geodesic. We also remark that any $2$-dimensional totally geodesic submanifold of a totally geodesic
quaternionic line  ${\rm H^{1}_{\Q}}$ is isometric to ${\rm H^{1}_{\C}}$.

\begin{prop}
Let $V$ be a subspace of $\F^{n,1}$. Then each linear isometry of $V$ into $\F^{n,1}$ can be extended to an element of ${\rm U}(n,1; \F)$.
\end{prop}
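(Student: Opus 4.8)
The plan is to prove a Witt-type extension theorem for the Hermitian form $\Phi=\herm{-,-}$ of signature $(n,1)$. Throughout, a \emph{linear isometry} $\phi\colon V\to\F^{n,1}$ means an injective (right) $\F$-linear map with $\herm{\phi(v),\phi(w)}=\herm{v,w}$ for all $v,w\in V$; such a $\phi$ carries the radical $R=V\cap V^{\perp}$ of $\Phi|_V$ isomorphically onto the radical of $\Phi|_{\phi(V)}$ and preserves the signature of every subspace. Two facts will be used: (i) since $\Phi$ has a single negative direction, every totally isotropic subspace of $\F^{n,1}$ has dimension $\le 1$; and (ii) a non-degenerate $\F$-Hermitian form is determined up to isometry by its dimension and its signature. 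For $\F=\Q$ fact (ii) still makes sense because $\herm{v,v}\in\R$ for all $v$ (a self-conjugate quaternion is real), so the form diagonalizes over $\Q$ with entries $\pm1$ and Sylvester's law of inertia applies exactly as in the real and complex cases.

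First I would reduce to the case that $\Phi|_V$ is non-degenerate. By (i), $\dim R\in\{0,1\}$. If $\dim R=1$, write $R=\F r$ with $r$ isotropic and pick any vector-space complement $V_1$ of $R$ in $V$; a short computation shows $\Phi|_{V_1}$ is automatically non-degenerate, since the radical of $\Phi|_{V_1}$ lies in $R\cap V_1=0$. The orthogonal complement $V_1^{\perp}$ of $V_1$ in $\F^{n,1}$ is non-degenerate and contains the isotropic vector $r$, so $r$ has a hyperbolic partner $s\in V_1^{\perp}$ with $\herm{s,s}=0$ and $\herm{r,s}=1$ (take any $s_0$ with $\herm{r,s_0}=1$ and put $s=s_0-r\cdot\tfrac12\herm{s_0,s_0}$, using $\herm{s_0,s_0}\in\R$). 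One checks $s\notin V$, so $\tilde V:=V\oplus\F s$ is non-degenerate, being the orthogonal sum of the hyperbolic plane spanned by $r,s$ and the non-degenerate space $V_1$. Performing the same construction on the image side yields $s'\in\phi(V_1)^{\perp}\setminus\phi(V)$ with $\herm{s',s'}=0$ and $\herm{\phi(r),s'}=1$; extending $\phi$ to $\tilde\phi\colon\tilde V\to\F^{n,1}$ by $\tilde\phi(s):=s'$ gives an injective linear isometry of the non-degenerate subspace $\tilde V\supseteq V$. Hence it suffices to treat non-degenerate $V$.

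Now suppose $\Phi|_V$ is non-degenerate. Then $\F^{n,1}=V\oplus V^{\perp}$ and $\F^{n,1}=\phi(V)\oplus\phi(V)^{\perp}$ are orthogonal direct sums, with $V^{\perp}$ and $\phi(V)^{\perp}$ non-degenerate of the same dimension $n+1-\dim V$. Since $\phi$ is an isometry of $V$ onto $\phi(V)$, the forms $\Phi|_V$ and $\Phi|_{\phi(V)}$ have equal signature, so by additivity of signature across an orthogonal decomposition the forms $\Phi|_{V^{\perp}}$ and $\Phi|_{\phi(V)^{\perp}}$ have equal signature as well; by (ii) there is a linear isometry $\psi\colon V^{\perp}\to\phi(V)^{\perp}$. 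Then $g:=\phi\oplus\psi$ is an $\F$-linear bijection of $\F^{n,1}$ preserving $\Phi$, i.e.\ $g\in{\rm U}(n,1;\F)$, and $g|_V=\phi$.

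The main obstacle is bookkeeping rather than a deep point: in the degenerate case one must make sure there is enough room to place the hyperbolic partners $s$ and $s'$ outside $V$ and $\phi(V)$ respectively, and that the extended map remains injective — this is precisely where fact (i) (the Witt index being exactly $1$, which bounds $\dim R$ by $1$) is needed. The only genuinely substantive input is fact (ii), the classification of non-degenerate Hermitian forms by signature; over $\Q$ this rests on the observation that such forms are real-valued on the diagonal and therefore obey Sylvester's law of inertia, so no new phenomenon arises from non-commutativity.
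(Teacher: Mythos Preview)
Your proof is correct; the paper itself gives no argument for this proposition but simply cites it as a special case of Witt's theorem (referring to Scharlau, \emph{Quadratic and Hermitian Forms}). You have supplied a clean self-contained proof of precisely that extension theorem in the signature-$(n,1)$ setting, so your approach and the paper's coincide, with your version filling in the details the paper leaves to the reference.
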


\medskip

This is a particular case of the Witt theorem, see \cite{Sch}.

\medskip

\begin{co}
Let $S \subset {\rm H^{n}_{\F}}$ be a totally geodesic submanifold. Then each linear isometry of $S$ into ${\rm H^{n}_{\F}}$ can be extended to an
element of the isometry group of ${\rm H^{n}_{\F}}$.
\end{co}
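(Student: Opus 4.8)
The plan is to deduce the corollary from the preceding proposition (the Witt extension theorem for subspaces of $\F^{n,1}$) by passing from the totally geodesic submanifold $S$ to the subspace of $\F^{n,1}$ that carries it, supplied by Proposition \ref{ptgsm}. I split into the two alternatives of that proposition. Observe first that a linear isometry $f$ of $S$ into ${\rm H^{n}_{\F}}$ carries $S$ onto a totally geodesic submanifold $S'$ isometric to $S$, so $S$ and $S'$ fall under the same alternative (a) or (b) of Proposition \ref{ptgsm}: the isometry type fixes not only which of ${\rm H^{k}_{\R}}$, ${\rm H^{k}_{\C}}$, ${\rm H^{k}_{\Q}}$ occurs but, for a $3$-dimensional $S$ with $\F=\Q$, also whether one is in case (a) or (b), since the sectional curvature is normalised differently in the two.

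Suppose we are in case (a): $S=\pi(W)\cap{\rm H^{n}_{\F}}$ for an $F$-hyperbolic subspace $W\subseteq\F^{n,1}$, $F$ a subfield of $\F$, and likewise $S'=\pi(W')\cap{\rm H^{n}_{\F}}$. A linear isometry of $S$ lifts, by the meaning of ``linear'', to a linear isometry $\tilde f\colon W\to W'$ of the carrying subspaces; were one instead to start from a general isometry, one would first compose with the ambient isometry of ${\rm H^{n}_{\F}}$ induced by a conjugation of $\F$ on $\F^{n,1}$, using $\bar z=jzj^{-1}$ (and Propositions \ref{qsim}, \ref{qua2}, \ref{qua3}) to straighten a possibly $F$-semilinear lift into an $F$-linear one preserving the Hermitian form. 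If $F$ is a proper subfield one then extends scalars, passing to the $\F$-span of $W$ and the $\F$-linear extension of $\tilde f$, still an isometry for $\Phi$. The preceding proposition now extends $\tilde f$ to $g\in{\rm U}(n,1;\F)$, and the image of $g$ in $\isom{{\rm H^{n}_{\F}}}$, composed back with any conjugation isometry used above, restricts to $f$ on $S$.

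Now suppose we are in case (b): $\F=\Q$ and $S$ is a $3$-dimensional totally geodesic submanifold of a quaternionic geodesic $L=\pi(W)\cap{\rm H^{n}_{\Q}}$, with $W$ a right $\Q$-subspace of $\Q^{n,1}$ of signature $(1,1)$. Then $L$ is isometric to ${\rm H^{4}_{\R}}$, $S$ is a totally geodesic hypersurface of $L$, and by the observation above $f(S)$ is of the same type, hence lies in a quaternionic geodesic $L'$. First I would extend $f|_{S}$ to an isometry $L\to L'$: by the already established case (a) with $F=\F=\R$, applied inside the real hyperbolic space $L\cong{\rm H^{4}_{\R}}$, such an extension exists, and since $S$ has codimension one in $L$ any two extensions differ by the reflection of $L$ in $S$, which reverses orientation; hence exactly one of them, say $\phi^{+}$, preserves orientation. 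As the orientation-preserving isometries of a quaternionic geodesic are precisely those induced by $\Q$-linear isometries of the carrying $(1,1)$-subspace, $\phi^{+}$ is induced by a $\Q$-linear isometry $W\to W'$, which the preceding proposition extends to $g\in{\rm U}(n,1;\Q)$; the image of $g$ in $\isom{{\rm H^{n}_{\Q}}}$ restricts to $f$ on $S$.

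The step I expect to require real work is the passage, in case (a) with $F$ a proper subfield, from the $F$-linear picture to an honest $\F$-linear isometry of a subspace of $\F^{n,1}$: one must check that $\tilde f$ extends $\F$-linearly to the $\F$-span of $W$ (which a priori is only $F$-spanned), that $\Phi$ restricts there to a nondegenerate Hermitian form of the correct signature, and that the conjugation twist straightens the semilinear part compatibly with right multiplication --- a compatibility argument resting on Propositions \ref{qsim}, \ref{qua2}, \ref{qua3}. In case (b) the only subtlety is the orientation argument isolating $\phi^{+}$; beyond these points everything is an immediate appeal to the Witt theorem stated above.
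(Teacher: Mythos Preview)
The paper gives no explicit proof of this corollary; it is simply stated as an immediate consequence of the preceding Witt-theorem proposition. Your proposal follows exactly that intended route and correctly supplies the details (the case split along Proposition \ref{ptgsm}, the scalar extension in case (a), and the orientation argument in case (b) identifying the $\Q$-linear extension via $\mathrm{PSp}(1,1)\cong \mathrm{SO}_0(4,1)$) that the paper leaves entirely to the reader.
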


\medskip

An interesting class of totally geodesic submanifolds of the quaternionic hyperbolic space ${\rm H^{n}_{\Q}}$ are submanifods which we call
totally geodesic submanifolds of complex type, or simply, submanifolds of complex type. Their construction is the following.
Let $\C^{n+1}(a) \subset\ \Q^{n+1} $ be the subset of vectors in $\Q^{n+1} $ with coordinates in $\C(a)$, where $a$ is a purely imaginary quaternion,
$\mod a =1$. Then $\C^{n+1}(a)$ is a vector space over the field $\C(a)$. The projectivization of $\C^{n+1}(a)$, denoted by ${\rm M}^n(\C(a))$, is a
projective submanifold of $\P\Q^{n}$ of real dimension $2n$. We call this submanifold  ${\rm M}^n(\C(a))$ a {\sl  projective submanifold of complex type of maximal dimension}.
It is clear that  the space $\C^{n+1}(a)$ is indefinite. The intersection
${\rm M}^n(\C(a))$  with ${\rm H^{n}_{\Q}}$ is a totally geodesic submanifold of  ${\rm H^{n}_{\Q}}$, called  a {\sl totally geodesic submanifold of complex type of maximal dimension}. It was proven in \cite{CGr} that all these submanifolds are isometric, and, moreover, they are globally equivalent
with respect to the isometry group of  ${\rm H^{n}_{\Q}}$, that is, for any two such submanifolds ${\rm M}$ and ${\rm N}$ there exists an element
$g \in  {\rm PU}(n,1; \Q)$ such that ${\rm M=g(N)}$. In particular, all of them are globally equivalent with respect to ${\rm PU}(n,1; \Q)$ to the
{\sl canonical totally geodesic complex  submanifold} ${\rm H^{n}_{\C}}$  defined by $\C^{n+1} \subset \Q^{n+1}$. This corresponds to the canonical
subfield of complex numbers $\C=\C(i) \subset  \Q$ in the above.

\medskip

If $V^{k+1} \subseteq  \C^{n+1}(a)$ is a subspace of complex dimension $k+1$, then its projectivization $W$ is called a {\sl projective submanifold of complex type of complex dimension k}. When $V^{k+1} \subseteq \C^{n+1}$, then its projectivization $W$ is called a {\sl canonical projective submanifold of complex type of complex dimension $k$}. In this case, we will denote $W$ as $\P\C^{k}$.

\medskip

If  $V^{k+1} \subseteq  \C^{n+1}(a)$  is indefinite, then the intersection
of its projectivization with ${\rm H^{n}_{\Q}}$ is a totally geodesic submanifold of ${\rm H^{n}_{\Q}}$. We call this  submanifold of ${\rm H^{n}_{\Q}}$ a {\sl totally geodesic submanifold of complex type of complex dimension $k$}. When $V^{k+1} \subseteq  \C^{n+1}$, we call this totally geodesic submanifold a {\sl canonical totally geodesic  submanifold of complex type of complex dimension $k$}, or a {\sl canonical complex hyperbolic submanifold of dimension $k$} of ${\rm H^{n}_{\Q}}$.  In this case, we will denote this submanifold as ${\rm H^{k}_{\C}}$.

\subsubsection{Stabilizators of totally geodesic submanifolds}\label{stab}

\noindent

Let $M$ be a totally geodesic submanifold in ${\rm H^{n}_{\F}}$. Let $I(M)$ denote the subgroup of ${\rm PU}(n,1; \F)$  which leaves $M$ invariant.

\medskip

If $F$ is a subfield of $\F$, we let  ${\mathcal{N}}^{+}(F,\F)=\{\lambda\in\F^{+};\lambda F\lambda^{-1}=F\}$, where $\F^{+}$ denotes the subgroup in $\F$ of elements with norm one.

\medskip

The following propositions can be found in \cite{CGr}, p.74.

\begin{prop} 
Let $M$ be a totally geodesic submanifold, such that $M$ is the intersection of the projectivization of an $F$-hyperbolic subspace of $\F^{n,1}$ with ${\rm H^{n}_{\F}}$ for some
subfield $F$ of $\F$. Then the elements $g\in I(M)$ are of the form:

$$g=\left(\begin{array}{rr}A\lambda&0\\0&B \end{array}\right),$$
where $A\in U(m,1;F)$, $ \lambda\in{\mathcal{N}}^{+}(F,\F),$ and $B\in U(n-m;\F).$
\end{prop}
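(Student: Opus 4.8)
The plan is to strip off the block structure, reduce by transitivity to the case where the induced transformation of $M$ fixes a point, and then read everything off in the ball model.

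Write $\dim_F V=m+1$, so that $\Phi|_V$ has signature $(m,1)$; let $W\subset\F^{n,1}$ be the $\F$-span of $V$ and let $W^{\perp}$ be its $\Phi$-orthogonal complement, which is positive definite (since the ambient form has index $1$, which is already attained on $V$). First I would show that every $g\in I(M)$ preserves $W$ and $W^{\perp}$: because $V$ is $F$-hyperbolic, the negative vectors it contains form a nonempty open cone and hence $F$-span $V$, so they $\F$-span $W$; for such a $v$ we have $\pi(v)\in M$, hence $\pi(g(v))=g(\pi(v))\in M$, i.e. $g(v)=u\mu$ with $u\in V$, $\mu\in\F\setminus\{0\}$, whence $g(v)\in W$. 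As $g$ is $\F$-linear and invertible this gives $g(W)=W$, and then $g(W^{\perp})=W^{\perp}$ because $g$ is unitary. Choosing an orthogonal $F$-basis of $V$ (which we may take so that $\Phi|_V$ has the standard Gram matrix) and completing it by an orthogonal basis of $W^{\perp}$, the matrix of $g$ becomes $\mathrm{diag}(C,B)$ with $B\in{\rm U}(n-m;\F)$ (the positive definite block) and $C\in{\rm U}(m,1;\F)$; moreover in this basis $M$ is the canonical $F$-submanifold ${\rm H^{m}_{F}}\subset{\rm H^{m}_{\F}}=\pi(W)\cap{\rm H^{n}_{\F}}$, and $C({\rm H^{m}_{F}})={\rm H^{m}_{F}}$. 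It now suffices to show that such a $C$ equals $A\lambda$ with $A\in{\rm U}(m,1;F)$ and $\lambda\in{\mathcal{N}}^{+}(F,\F)$.

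Next I would normalise. Since ${\rm U}(m,1;F)$ acts transitively on ${\rm H^{m}_{F}}$ and its elements (acting as $\mathrm{diag}(\cdot,I)$ on $W\oplus W^{\perp}$) lie in $I(M)$, after precomposing $C$ with a suitable element of ${\rm U}(m,1;F)$ I may assume $C$ fixes the centre $o=\pi(e_{m+1})$ of the ball, where $e_{m+1}$ is the negative basis vector of $W$. Then $C(e_{m+1})=e_{m+1}c$ with $\mod c=1$ by unitarity, $C$ preserves $e_{m+1}^{\perp}=\mathrm{span}(e_1,\dots,e_m)$, and hence $C=\mathrm{diag}(D,c)$ with $D\in{\rm U}(m;\F)$. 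Now I would exploit $C({\rm H^{m}_{F}})={\rm H^{m}_{F}}$ in the ball coordinates $w_i=z_iz_{m+1}^{-1}$, in which $C$ acts by $w\mapsto(Dw)c^{-1}$; this transformation preserves $\sum\mod{w_i}^2$ automatically. Evaluating the condition $C(w)\in F^m$ at the standard basis vectors $w=e_j$ gives $D_{ij}c^{-1}\in F$ for all $i,j$, so $D=Gc$ with $G$ having entries in $F$ (and $G$ invertible, as $L_c$ is); evaluating it at $w=e_jf$ with $f\in F$ gives $g_{ij}\,(cfc^{-1})\in F$ for all $i$, hence $cfc^{-1}\in F$, i.e. $cFc^{-1}=F$ (equality by the same argument for $C^{-1}$), so $c\in{\mathcal{N}}^{+}(F,\F)$. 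Since $\mod c=1$, left multiplication $L_c$ of the coordinates by $c$ is unitary, so $G=D\,L_c^{-1}$ is unitary with entries in $F$, i.e. $G\in{\rm U}(m;F)$; thus $C=\mathrm{diag}(Gc,c)=A\lambda$ with $A=\mathrm{diag}(G,1)\in{\rm U}(m,1;F)$ and $\lambda=c\in{\mathcal{N}}^{+}(F,\F)$. Undoing the precomposition only multiplies $A$ on one side by an element of ${\rm U}(m,1;F)$, and reassembling with $B$ gives the stated form of $g$.

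The only genuinely delicate point, and the main obstacle, is the non-commutative bookkeeping: throughout one must carefully distinguish left from right scalar multiplication on the right $\F$-vector space, and the normalizer condition $cFc^{-1}=F$ appears precisely because, in the formula for $C$ acting on the $F$-rational points, the scalar $c$ is forced to sit \emph{between} the $F$-valued matrix entries and the $F$-valued coordinates; a careless ``commutative'' computation would instead (wrongly) leave $\lambda$ unconstrained. The spanning statement and the transitivity reduction in the first two paragraphs are routine.
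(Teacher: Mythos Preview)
Your argument is correct. The paper does not supply a proof of this proposition at all; it simply states that the result ``can be found in \cite{CGr}, pp.~74'' (Chen--Greenberg). So there is nothing to compare against in the paper itself, and your self-contained argument is a genuine addition.

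A couple of very minor remarks on the write-up. When you ``evaluate at $w=e_j$'' and ``$w=e_jf$'', strictly speaking these lie on the boundary sphere rather than in the open ball; either invoke continuity of $C$ to pass to the closure, or take $w=te_j$ and $w=te_jf$ for small real $t>0$, which gives the same conclusions. In the step ``$g_{ij}(cfc^{-1})\in F$ for all $i$, hence $cfc^{-1}\in F$'', the implicit justification is that for each fixed $j$ at least one $g_{ij}$ is nonzero (invertibility of $G$), so you can divide; it is worth saying this. Finally, the statement is an ``are of the form'' assertion, i.e.\ a characterisation; your proof gives the nontrivial direction, and the converse (that every $\mathrm{diag}(A\lambda,B)$ with $A\in{\rm U}(m,1;F)$, $\lambda\in\mathcal{N}^{+}(F,\F)$, $B\in{\rm U}(n-m;\F)$ lies in $I(M)$) follows immediately from $\lambda F\lambda^{-1}=F$, since then $\lambda v=(\lambda v\lambda^{-1})\lambda$ shows $A\lambda$ maps $F^{m+1}$ into $F^{m+1}\cdot\lambda$, hence preserves its projectivisation. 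You may want to add a sentence to that effect.
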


\medskip

\begin{prop}
\label{stab1} 
Let $M$ be a 3-dimensional  totally geodesic submanifold of a totally geodesic quaternionic line  ${\rm H^{1}_{\Q}}$  in ${\rm H^{n}_{\Q}}$. Then the elements $g\in I(M)$ are of the form:

$$g=\left(\begin{array}{rr}A&0\\0&B \end{array}\right), \text{ where } A=\left(\begin{array}{rr}a&b\\ -\varepsilon b & \varepsilon a \end{array}\right)\in U(1,1;\Q),$$ \\
$\varepsilon = \pm 1$ and $B\in U(n-1;\Q)$.
\end{prop}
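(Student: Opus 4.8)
The plan is to normalize the pair consisting of $M$ and the quaternionic geodesic containing it, reduce to a computation inside that single quaternionic geodesic, and then solve the resulting quaternionic equations. First I would observe that, since $M$ is $3$-dimensional, the quaternionic geodesic $L$ it spans is the \emph{unique} quaternionic geodesic containing $M$: any two distinct points of $M$ already span $L$, and any quaternionic geodesic through them coincides with $L$. Because each isometry between totally geodesic submanifolds of ${\rm H^{n}_{\Q}}$ extends to an isometry of ${\rm H^{n}_{\Q}}$ (the corollary above), and because ${\rm H^{1}_{\Q}}$ is isometric to ${\rm H^{4}_{\R}}$ with all of its totally geodesic ${\rm H^{3}_{\R}}$'s mutually isometric, I may choose the orthogonal basis $\{e_1,\dots,e_{n+1}\}$ so that $L=\pi(\Q e_1\oplus\Q e_{n+1})\cap{\rm H^{n}_{\Q}}$ and, in the ball coordinate $w=z_1z_{n+1}^{-1}$ on $L$ (which makes $L$ the unit ball of $\Q=\R^4$), $M$ is the slice $M_0=\{\,w\in\Q:{\rm Re}\,w=0,\ \mod{w}<1\,\}$ of this ball by the imaginary hyperplane.

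Next I would pass to a $2\times2$ block. Let $g\in I(M_0)$ with lift $\tilde g\in{\rm U}(n,1;\Q)$. Since $g$ preserves $M_0$ it preserves $L$, so $\tilde g$ preserves $\Q e_1\oplus\Q e_{n+1}$ and, being unitary, also its orthogonal complement $\Q e_2\oplus\dots\oplus\Q e_n$; after reordering coordinates $\tilde g=\left(\begin{smallmatrix}A&0\\0&B\end{smallmatrix}\right)$ with $A\in{\rm U}(1,1;\Q)$ acting on $L$ and $B\in{\rm U}(n-1;\Q)$ arbitrary. Writing $A=\left(\begin{smallmatrix}a&b\\c&d\end{smallmatrix}\right)$, the induced transformation of the ball is $w\mapsto\phi_A(w)=(aw+b)(cw+d)^{-1}$, and the proposition reduces to the claim that $\phi_A(M_0)=M_0$ if and only if $A=\left(\begin{smallmatrix}a&b\\-\varepsilon b&\varepsilon a\end{smallmatrix}\right)$ for some $\varepsilon=\pm1$.

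For the ``if'' direction I would take such an $A\in{\rm U}(1,1;\Q)$; unitarity then forces $\mod{a}^2-\mod{b}^2=1$ and ${\rm Re}(\bar a b)=0$, and for imaginary $w$ one has $\overline{aw+b}=\bar b-w\bar a$, $\overline{a-bw}=\bar a+w\bar b$. Clearing denominators in $\phi_A(w)+\overline{\phi_A(w)}=0$ collapses it to $2\,{\rm Re}(\bar a b)(1+w^2)=0$, which holds, so $\phi_A$ preserves $M_0$. For the ``only if'' direction, assume $\phi_A(M_0)=M_0$. From $\phi_A(0)=bd^{-1}\in{\rm Im}\,\Q$ together with the unitarity relation $a\bar c=b\bar d$ I get ${\rm Re}(\bar a c)=0$, and from $\phi_A^{-1}(0)=-\bar c\,\bar d^{-1}\in{\rm Im}\,\Q$ (using $A^{-1}=\left(\begin{smallmatrix}\bar a&-\bar c\\-\bar b&\bar d\end{smallmatrix}\right)$) together with $\bar a b=\bar c d$ I get ${\rm Re}(\bar a b)=0$. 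If $b=0$ then $c=0$ and $\mod{a}=\mod{d}=1$, so $\phi_A(w)=aw d^{-1}$ is an isometry of $\R^4$ fixing $0$ and preserving the hyperplane ${\rm Im}\,\Q$; hence it preserves the orthogonal line $\R\cdot1$, which forces $ad^{-1}\in\R$, i.e. $d=\varepsilon a$ with $\varepsilon=\pm1$, and $A$ has the asserted form.

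The case $b\neq0$ is where the work is, and I expect this to be the main obstacle. Here unitarity gives $\mod{c}=\mod{b}\neq0$ and, on conjugating $a\bar c=b\bar d$, $d=c\bar a b/\mod{b}^2$. Expanding $\phi_A(w)+\overline{\phi_A(w)}=0$ for imaginary $w$ and clearing denominators yields
$$(\bar d a+\bar b c)\,w-w\,(\bar c b+\bar a d)+2\,{\rm Re}(\bar a c)\,\mod{w}^2+2\,{\rm Re}(\bar d b)=0$$
for all imaginary $w$ with $\mod{w}<1$; scaling $w\mapsto tw$ and comparing powers of $t$ shows the only new condition is $(\bar d a+\bar b c)\,w=w\,(\bar c b+\bar a d)$ for all imaginary $w$. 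Writing $x=\bar a c$, $y=\bar a b$ (both purely imaginary, with $\bar c a=-x$, $\bar b a=-y$) and substituting $d=c\bar a b/\mod{b}^2$, the cross terms collapse --- using $\mod{a}^2-\mod{b}^2=1$ --- and the identity reduces to $xy=yx$. By Proposition~\ref{qua2}, two commuting purely imaginary quaternions are $\R$-proportional, so $\bar a c=t\,\bar a b$ for some $t\in\R$, whence $c=tb$; comparing norms gives $t=-\varepsilon$ with $\varepsilon=\pm1$, i.e. $c=-\varepsilon b$. Then $d=c\bar a b/\mod{b}^2=-\varepsilon\,b\bar a b/\mod{b}^2$, and since ${\rm Re}(\bar a b)=0$ forces $b\bar a b=-\mod{b}^2 a$, we obtain $d=\varepsilon a$. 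This shows $A$ has the stated form; combined with the ``if'' direction and the freedom in $B$, and passing to ${\rm PU}(n,1;\Q)$, the proposition follows. The one genuinely delicate ingredient throughout is the non-commutative bookkeeping in this last step.
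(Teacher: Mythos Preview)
The paper does not supply its own proof of this proposition; it is quoted from Chen--Greenberg \cite{CGr}, p.~74, without argument. Your direct proof is correct and self-contained, so there is nothing in the paper to compare it against beyond noting that your normalization $M_0=\{w\in\Q:{\rm Re}\,w=0,\ \mod w<1\}$ inside the ball model of ${\rm H^1_\Q}\cong{\rm H^4_\R}$ is compatible with the matrix form asserted there.

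A couple of remarks on the execution. In the case $b\neq0$ your computation can be streamlined: with $x=\bar a c$ and $y=\bar a b$ purely imaginary one has $b=ay/\mod a^2$, $c=ax/\mod a^2$, $d=axy/\mod y^2$, and then directly
\[
\bar d\,a+\bar b\,c=\frac{yx}{\mod y^2},\qquad \bar c\,b+\bar a\,d=\frac{xy}{\mod y^2},
\]
so the condition $(\bar d a+\bar b c)\,w=w\,(\bar c b+\bar a d)$ for all imaginary $w$ reads $(yx)\,w=w\,\overline{yx}$, forcing $yx\in\R$ and hence $xy=yx$. From there Proposition~\ref{qua2} gives $x\in\R\cdot y$, and the remainder of your argument (comparing norms to get $c=-\varepsilon b$, and using $b\bar a=-a\bar b$ to deduce $d=\varepsilon a$) goes through exactly as written. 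The step you flagged as the ``main obstacle'' is therefore already fully handled by your own bookkeeping.
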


\subsubsection{A little more about the isometry group of the quaternionic hyperbolic space}\label{lmaig}

\noindent Let us consider the complex hyperbolic space ${\rm H^{n}_{\C}} $. It has a natural complex structure related to its isometry group, and the isometry group of ${\rm H^{n}_{\Q}}$
is generated by the holomorphic isometry group, which is the projective group ${\rm PU}(n,1; \C)$, and the anti-holomorphic isometry $\sigma$ induced by complex
conjugation in $\C ^{n+1}$. This anti-holomorphic isometry corresponds to the unique non-trivial automorphism of the field of complex numbers. Below we consider a similar isometry
of quaternionic hyperbolic space ${\rm H^{n}_{\Q}}$.

\medskip
We recall that if $f: \Q \rightarrow \Q$ is an automorphism of $\Q$, then $f$ is an inner automorphism of $\Q$, that is,
$f(q)= a q a^{-1}$ for some $a \in \Q, a \neq 0$.

\medskip

It follows from the fundamental theorem of projective geometry, see \cite{Art}, that each projective map $L: \P\Q^{n}\rightarrow \P\Q^{n}$ is induced by a semilinear or linear map $\tilde{L}: \Q^{n+1}\rightarrow \Q^{n+1}$.

\medskip

It is easy to see that if a projective map

$$L: \P\Q^{n}\rightarrow \P\Q^{n}$$

is induced by a semilinear map

$$\tilde{L}: \Q^{n+1}\rightarrow \Q^{n+1}, \tilde{L}(v)=a v a^{-1},v \in \Q^{n+1}, a\in \Q \setminus \R,$$

then it is  also induced by a linear map $v \mapsto av$.

\medskip

Therefore, the projective group of
$\P\Q^{n}$  is the projectivization of the linear group of  $\Q^{n+1}$.

\medskip

This implies that if

$$L: {\rm H^{n}_{\Q}} \rightarrow {\rm H^{n}_{\Q}} $$

is an isometry, then $L$ is induced  by a linear isometry

$$\tilde{L}: \Q^{n,1} \rightarrow \Q^{n,1}.$$

\medskip

This explains why the group of all isometries of ${\rm H^{n}_{\Q}}$
is the projectivization of the linear group  ${\rm U}(n,1; \Q)$, that is, ${\rm PU}(n,1; \Q).$

\medskip

Next we consider a curious map, which is an isometry of the quaternionic hyperbolic space, that has no analogue in geometries over
commutative fields. Let $\tilde{L}_a : v \mapsto av$, $v \in \Q^{n+1}$,  $a\in\Q$, $a$ is not real. The projectivization of this linear map
defines a non-trivial map $L_a:\P\Q^{n}\rightarrow \P\Q^{n}$. We remark that in projective spaces over commutative fields this map $L_a$ is identity.
It easy to see that $\tilde{L}_a \in {\rm U}(n,1; \Q)$ if and only if $|a|=1$, so $L_a$ is in ${\rm PU}(n,1; \Q)$ if and only if $|a|=1$.

\begin{prop}
\label{fixp}
 Let $a \in \Q$ be such that $|a|=1$. Then the fixed point set $S_a$ of $L_a$  is a totally geodesic submanifold of complex type of maximal dimension in ${\rm H^{n}_{\Q}}.$
This submanifold is globally equivalent to the canonical complex hyperbolic submanifold ${\rm H^{n}_{\C}}$ of ${\rm H^{n}_{\Q}}$.
\end{prop}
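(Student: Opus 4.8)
The plan is to identify the fixed-point set of $L_a$ explicitly in the orthogonal basis $e=\{e_1,\dots,e_{n+1}\}$ and recognize it as a submanifold of complex type of maximal dimension, after which the second assertion follows from the homogeneity result quoted in Section 1.

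First I would reduce to a normal form for the quaternion $a$. By Corollary \ref{qua1}, $a$ is similar to a unique complex number $b=b_0+b_1 i$ with $b_1\geq 0$; write $a=\tau b\tau^{-1}$ with $|\tau|=1$. The key observation is that $\tilde{L}_a$ and $\tilde{L}_b$ are conjugate in ${\rm U}(n,1;\Q)$ by the scalar map $v\mapsto \tau^{-1}v$ (which lies in ${\rm U}(n,1;\Q)$ since $|\tau|=1$ and scalar right-multiplication is trivial projectively, while left-multiplication by a unit quaternion preserves $\herm{-,-}$). Hence $L_a$ is conjugate in ${\rm PU}(n,1;\Q)$ to $L_b$, and it suffices to compute the fixed set of $L_b$ and then note that conjugation by an isometry carries a submanifold of complex type of maximal dimension to another such submanifold (by the globally-equivalent statement in Section 1). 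So assume from now on $a=b\in\C$, $a\notin\R$.

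Next I would compute the fixed set of $L_b$ directly. A point $\pi(v)\in\P\Q^n$ is fixed by $L_b$ iff $bv=v\mu$ for some $\mu\in\Q^{*}$; writing $v=(z_1,\dots,z_{n+1})$ this means $bz_i=z_i\mu$ for all $i$, i.e. each nonzero coordinate $z_i$ satisfies $z_i^{-1}bz_i=\mu$, so all nonzero coordinates conjugate $b$ to the same $\mu$. Since $b\notin\R$, Proposition \ref{qua3} says the set of $\lambda\in\Q$ with $\lambda b=b\lambda$ is exactly $\C(b)=\C$; more generally the set of $z$ with $z^{-1}bz$ equal to a fixed value is a single right-coset structure — concretely, $z_i^{-1}bz_i = z_j^{-1}bz_j$ forces $z_i z_j^{-1}$ to centralize $b$, hence $z_i z_j^{-1}\in\C$. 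Therefore the fixed set consists of all $\pi(v)$ whose coordinates lie in a single right-translate $\C(b)\cdot w$ of the subfield; projectively this is exactly the projectivization of $\C^{n+1}(b)=\C^{n+1}$, i.e. ${\rm M}^n(\C(b))$. (One must check the edge case where some coordinates vanish: this only enlarges the allowed configurations consistently and still lands inside ${\rm M}^n(\C(b))$.) Intersecting with ${\rm H^{n}_{\Q}}$ and using that $\C^{n+1}(b)$ is indefinite, the fixed set of $L_b$ in ${\rm H^{n}_{\Q}}$ is precisely a totally geodesic submanifold of complex type of maximal dimension, and by construction it is the canonical one ${\rm H^{n}_{\C}}$ attached to $\C=\C(i)$ after a further unit-scalar conjugation if $b\neq i$ in imaginary part — or one simply invokes the "all such submanifolds are globally equivalent" statement to conclude.

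Finally, combining the two reductions: the fixed set of $L_a$ for general non-real $a$ is the $g$-image of ${\rm H^{n}_{\C}}$ for a suitable $g\in{\rm PU}(n,1;\Q)$, hence is a totally geodesic submanifold of complex type of maximal dimension globally equivalent to the canonical ${\rm H^{n}_{\C}}$, which is the claim. I expect the main obstacle to be the careful bookkeeping in the coordinate computation of $\mathrm{Fix}(L_b)$ — in particular handling vectors with some zero coordinates and verifying that the non-commutativity does not produce extra fixed points beyond $\P\C^{n}$ — but Propositions \ref{qua2} and \ref{qua3} on centralizers are exactly the tools that close this gap.
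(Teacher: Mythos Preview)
Your argument is correct and rests on the same key ingredient as the paper's proof, namely Proposition~\ref{qua3} on centralizers. The paper's proof is a single line (``follows from Proposition~\ref{qua3}''), and what you have written is essentially a fleshed-out version of that line.

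One remark: your initial reduction step---conjugating $a$ to a complex number $b$ via Corollary~\ref{qua1}---is unnecessary. Proposition~\ref{qua3} applies directly to any non-real $a$, so the computation you carry out for $L_b$ works verbatim for $L_a$: from $az_i=z_i\mu$ one gets $z_iz_j^{-1}\in\C(a)$, hence after right-multiplying $v$ by a nonzero coordinate the representative lies in $\C^{n+1}(a)$, and conversely every $v\in\C^{n+1}(a)$ satisfies $av=va$. This identifies the fixed set as ${\rm M}^n(\C(a))\cap{\rm H^{n}_{\Q}}$ in one step, already a totally geodesic submanifold of complex type of maximal dimension, and the global equivalence to ${\rm H^{n}_{\C}}$ is then the quoted result from Section~\ref{tgsm}. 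Dropping the conjugation detour also removes the slightly awkward phrase ``after a further unit-scalar conjugation if $b\neq i$'': since $b\in\C\setminus\R$ you already have $\C(b)=\C$, so the fixed set of $L_b$ is literally the canonical ${\rm H^{n}_{\C}}$ with no adjustment needed.
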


\begin{proof} The proof follows from Proposition \ref{qua3}.
\end{proof}

\medskip

It is also easy to see that if $a$ is purely imaginary and $|a|=1$, then  $L_a$ is an involution. We call this isometry $L_a$ a {\sl geodesic reflection} with respect to $S_a$, or a geodesic reflection in $S_a$.

\section{Bisectors in quaternionic hyperbolic space}

\noindent

In ${\rm H^{n}_{\F}}$, as it follows from Proposition \ref{ptgsm}, totally geodesic (real) hypersurfaces exist only when $\F=\R$.
Therefore, when $\F \neq  \R$, a reasonable substitute  are the bisectors which are close as possible to being totally geodesic.
The reader can find a comprehensive study of bisectors in complex hyperbolic space in the Goldman book \cite{Gol}.

\medskip

Giraud \cite{Gir} and Mostow \cite{Mos} described the structure of bisectors in complex hyperbolic space ${\rm H^{n}_{\C}}$ in terms of a foliation by
totally geodesic complex submanifolds of maximal dimension. Goldman \cite{Gol} proved that bisectors enjoy another decomposition
into totally real, totally geodesic submanifolds, which is called the {\sl meridian decomposition}. It is important that
these decompositions are unique.

\medskip

In this section, we will describe various decompositions of bisectors in quaternionic hyperbolic space.
We show that geometry of bisectors in quaternionic hyperbolic space is more rich than in complex hyperbolic geometry. First, we prove an analogue
of the Mostow decomposition of bisectors in ${\rm H^{n}_{\Q}}$.
Finally, we will show that any bisector in quaternionic hyperbolic space ${\rm H^{n}_{\Q}}$ is the union of totally geodesic submanifolds of ${\rm H^{n}_{\Q}}$
isometric to ${\rm H^{n}_{\C}}$ intersecting in a common point. In some sense, this is an analogue of the Goldman meridian decomposition. We call such decompositions
the {\sl fan decompositions}. The existence of fan decompositions implies, in particular,
that any bisector in quaternionic hyperbolic space is star-like with respect to any point in its real spine.

The proof of these results is based on the classification of reflective submanifolds in quaternionic hyperbolic space \cite{Leu}.

\subsection{Bisectors in hyperbolic spaces}

Let $p_1, p_2$ be two distinct points in ${\rm H^{n}_{\F}}$. The {\sl bisector equidistant from $p_1$ and $p_2$,} or the {\sl bisector of }$\{p_1,p_2 \}$ is defined as

$$B(p_1,p_2)=\{p \in {\rm H^{n}_{\F}}: d(p_1, p)=d(p_2,p)\}.$$

\medskip

An equidistant hypersurface or  {\sl bisector} in  ${\rm H^{n}_{\F}}$ is a subset $B=B(p_1,p_2)$ for some pair of points $p_1, p_2$. 

\medskip

In this case, we will say that $B$
is equidistant from $p_1$ (or equidistant from $p_2$).

\medskip

Next we consider the following two cases: (1) $\F=\C$, and (2) $\F=\Q$.

\subsubsection{Bisectors in complex hyperbolic space}

In this section, we recall some basic results on bisectors in complex hyperbolic space.
\medskip

Let $\{p_1,p_2\}$ be as above. Then there exists the unique complex geodesic $\Sigma \subset {\rm H^{n}_{\C}}$ spanned by $p_1$ and $p_2$. Following Mostow \cite{Mos},
we call $\Sigma$ the {\sl complex spine}  with respect to the pair $\{p_1,p_2\}$. The {\sl spine} of $B$ (with respect to $\{p_1,p_2\}$)
equals

$$\sigma \{p_1,p_2\}= B(p_1,p_2) \cap \Sigma = \{ p \in \Sigma : d(p_1,p)=d(p_2,p)\}$$
that is, the orthogonal bisector of the geodesic segment joining $p_1$ and $p_2$ in $\Sigma$.

\medskip

The following result is due to Giraud \cite{Gir} and Mostow \cite{Mos}, see also \cite{Gol}.

\begin{theorem}

Let $B, \Sigma$ and $\sigma$ be as above. Let $\Pi_\Sigma :  {\rm H^{n}_{\C}} \rightarrow \Sigma$ be the orthogonal projection onto $\Sigma$. Then

$$B = \Pi_\Sigma ^{-1} (\sigma)=\bigcup_{s \in \sigma} \Pi_\Sigma ^{-1}(s)$$.

\end{theorem}

\medskip

The complex hyperplanes $\Pi_\Sigma ^{-1}(s)$, for $s \in \sigma$, are called the {\sl slices} of $B$ (with respect to $\{p_1,p_2\}$), and the decomposition
of a bisector into slices is called the {\sl slice decomposition}, or the {\sl Mostow decomposition}.

\medskip

Since orthogonal projection $\Pi_\Sigma :  {\rm H^{n}_{\C}} \rightarrow \Sigma$ is a real analytic fibration, we obtain that a bisector is a real
analytic hypersurface in ${\rm H^{n}_{\C}}$ diffeomorphic to $\R^{2n-1}$.

\medskip
It was shown by Goldman  \cite{Gol} that the slices, the spine and complex spine of a bisector $B$ depend intrinsically on the hypersurface $B$, and not on the defining pair $\{p_1,p_2\}$.

\begin{theorem}
Suppose that $\{p_1,p_2\}$ and  $\{p^\prime_1,p^\prime_2\}$ are two pairs of distinct points in complex hyperbolic space ${\rm H^{n}_{\C}}$ such that $B=B(p_1,p_2)$ and $B^\prime=B(p^\prime_1,p^\prime_2)$
are equal. Then the slices (respectively spine, complex spine) of $B$ with respect to $\{p_1,p_2\}$ equal the slices (respectively spine, complex spine) of $B^\prime$
with respect to $\{p^\prime_1,p^\prime_2\}$.
\end{theorem}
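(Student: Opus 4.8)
The plan is to reduce the uniqueness statement to the intrinsic characterization of the spine and complex spine, exactly as Goldman does in the complex case, but being careful about which structures in ${\rm H^{n}_{\C}}$ are canonical. First I would recall that a bisector $B = B(p_1,p_2)$ fibers over its spine $\sigma$ by the Mostow decomposition $B = \Pi_\Sigma^{-1}(\sigma)$, with fibers the slices, which are complex hyperplanes (codimension-one totally geodesic complex submanifolds). The key geometric input is that the complex spine $\Sigma$ is the unique complex geodesic meeting every slice orthogonally, and the spine $\sigma$ is a real geodesic in $\Sigma$. The strategy is therefore: (i) show the slices of $B$ are intrinsically determined by $B$ as a subset; (ii) deduce that the complex spine, being the orthogonal complement to the slices along $\sigma$, is determined; (iii) deduce $\sigma = B \cap \Sigma$ is determined.

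For step (i), the cleanest route is to characterize the slices intrinsically via totally geodesic complex submanifolds contained in $B$. One shows that each slice is a maximal totally geodesic complex submanifold of ${\rm H^{n}_{\C}}$ contained in $B$, and conversely every maximal such submanifold lying in $B$ is a slice. The inclusion "slices are maximal totally geodesic complex submanifolds in $B$" follows because a slice is already a complex hyperplane, hence of maximal complex dimension $n-1$, and $B$ itself is not totally geodesic (it is a real hypersurface, not complex). For the converse — that $B$ contains no "extra" totally geodesic complex submanifolds transverse to the slices — one argues that if $N \subset B$ were a complex submanifold of complex dimension $n-1$ not equal to any slice, then its image under $\Pi_\Sigma$ would be a nondegenerate arc in $\sigma$ (since $N$ is not contained in a single fiber), but then $N$ would have to meet the slices $\Pi_\Sigma^{-1}(s)$ in complex submanifolds of complex dimension $n-2$ varying with $s$, and a dimension/rigidity count using the fact that the slices are parametrized by the real $1$-dimensional $\sigma$ forces a contradiction with $N$ being totally geodesic of complex dimension $n-1$. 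This is the step I expect to be the main obstacle: making the "no extra complex submanifolds" argument rigorous without hand-waving. One should invoke Proposition~\ref{ptgsm} (classification of totally geodesic submanifolds of ${\rm H^{n}_{\C}}$) to know that any such $N$ is itself a copy of ${\rm H^{n-1}_{\C}}$, and then compare the second fundamental form of $B$ along $N$ with that of a slice.

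Once step (i) is in hand, steps (ii) and (iii) are formal. Given that the collection of slices $\{\Pi_\Sigma^{-1}(s) : s \in \sigma\}$ is intrinsic, the complex spine is recovered as the unique complex geodesic $\Sigma$ orthogonal to every slice: concretely, pick any two distinct slices $S_1, S_2$; the orthogonal complement construction produces, through any point of $B$, a well-defined complex geodesic, and these all coincide by the foliation structure. (Alternatively, $\Sigma$ is the complex geodesic spanned by the two "extreme" behavior of slices, or one uses that $\Sigma$ is the set of points equidistant along the fibers.) Then $\sigma = B \cap \Sigma$ is intrinsic, since both $B$ and $\Sigma$ now are. Finally, if $B = B(p_1,p_2) = B(p'_1,p'_2) = B'$, apply the above to both defining pairs: the slices of $B$ w.r.t. $\{p_1,p_2\}$ and the slices of $B'$ w.r.t. $\{p'_1,p'_2\}$ are each equal to the intrinsic collection of maximal totally geodesic complex submanifolds contained in the common set $B = B'$, hence equal to each other; likewise for the complex spine and the spine. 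This completes the proof. I would remark that the argument is due to Goldman \cite{Gol} and is recalled here only because its quaternionic analogue is one of the goals of the paper.
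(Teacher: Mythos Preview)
The paper does not give its own proof of this theorem; it quotes the result from Goldman and summarizes his argument in one sentence: a bisector in ${\rm H^{n}_{\C}}$ is a Levi-flat real hypersurface, and its slices are precisely the leaves of the Levi foliation, i.e.\ the maximal \emph{holomorphic} submanifolds of $B$. That is an intrinsic characterization of the slices via the CR structure that $B$ inherits from the ambient complex manifold, and the complex spine and spine then follow as you describe in steps (ii) and (iii).

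Your route is genuinely different: you replace ``maximal holomorphic submanifold of $B$'' by ``maximal totally geodesic complex submanifold contained in $B$''. That is a reasonable substitute, but your step (i) --- ruling out complex hyperplanes in $B$ other than the slices --- is precisely where the content lies, and the sketch you give (project to $\sigma$, obtain an arc, do a dimension/rigidity count, compare second fundamental forms) is not a proof; you never say what the actual contradiction is, and bringing in the second fundamental form of $B$ is heavier machinery than the statement warrants. If you want to close this gap without Levi-flatness, look at the paper's proof of the quaternionic analogue: there the argument is pure projective geometry. Two distinct complex hyperplanes in $\P\C^{n}$ are transversal and meet in a projective subspace of complex dimension $n-2$, hence real dimension $2n-4$; but if both hyperplanes lie in the smooth $(2n-1)$-dimensional hypersurface $B$, transversality inside $B$ would force their intersection to have real dimension $(2n-2)+(2n-2)-(2n-1)=2n-3$, a contradiction. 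That argument transplants verbatim to the complex case and fills your gap cleanly, with no differential geometry needed.
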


\medskip

Goldman's proof of this theorem is based on the following: the complex hyperbolic space is a complex analytic manifold; a bisector is Levi-flat and its maximal holomorphic
submanifolds are its slices.

\medskip

The endpoints of the spine of a bisector $B$ are called the {\sl vertices} of $B$. Since a geodesic $\sigma \in {\rm H^{n}_{\C}}$ is completely determined by the unordered
pair $\partial \sigma \subset \partial {\rm H^{n}_{\C}}$ consisting of its endpoints, bisectors are completely parametrized by unordered pairs of distinct points in $\partial {\rm H^{n}_{\C}}$.
Therefore, there is a duality between bisectors and geodesics. We associate to every bisector a geodesic, its spine $\sigma$. Conversely, if $\sigma \subset {\rm H^{n}_{\C}}$ is
a geodesic, then there exists a unique bisector $B=B_\sigma$ with the spine $\sigma$. Given a geodesic $\sigma$, there exists a unique complex geodesic $\Sigma \supset \sigma$.
Let $R_\sigma :\Sigma\rightarrow \Sigma$ be the unique reflection whose fixed-point set is $\sigma$. Take an arbitrary point $p_1 \in \Sigma \smallsetminus \sigma$
and let $p_2=R_\sigma (p_1)$. Then

$$B=B(p_1,p_2)=\Pi_\Sigma ^{-1}(\sigma)$$
is a bisector having $\sigma$ as its spine. Therefore, we have

\begin{theorem}
There is a natural bijective correspondence between bisectors in ${\rm H^{n}_{\C}}$ and geodesics in ${\rm H^{n}_{\C}}$.
\end{theorem}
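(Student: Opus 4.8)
The plan is to exhibit the correspondence explicitly as a pair of mutually inverse maps and to check that each is well defined. In one direction, I would send a bisector $B$ to its spine $\sigma=\sigma(B)$, which is a geodesic in ${\rm H^{n}_{\C}}$; the essential point here is that $\sigma$ (together with the complex spine $\Sigma$) is an intrinsic invariant of the hypersurface $B$, independent of any defining pair $\{p_1,p_2\}$, which is exactly the content of Goldman's theorem quoted above. In the other direction, given a geodesic $\sigma$, I would produce a bisector $B_\sigma$ by the recipe described just before the statement: take the unique complex geodesic $\Sigma\supset\sigma$, let $R_\sigma$ be the reflection of $\Sigma$ fixing $\sigma$ pointwise, pick any $p_1\in\Sigma\smallsetminus\sigma$, set $p_2=R_\sigma(p_1)$, and put $B_\sigma=B(p_1,p_2)=\Pi_\Sigma^{-1}(\sigma)$.

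Next I would verify that these two maps are inverse to each other. For $B\mapsto\sigma\mapsto B_\sigma$: writing $\Sigma$ for the complex spine of $B$ and $\sigma$ for its spine, the Giraud--Mostow slice decomposition gives $B=\Pi_\Sigma^{-1}(\sigma)$, so $B=B_\sigma$. For $\sigma\mapsto B_\sigma\mapsto\sigma'$: by construction the spine of $B(p_1,p_2)$ with respect to $\{p_1,p_2\}$ is the orthogonal bisector in $\Sigma$ of the geodesic segment $[p_1,p_2]$, and since $p_2=R_\sigma(p_1)$ that orthogonal bisector is precisely the fixed-point set $\sigma$ of $R_\sigma$; hence $\sigma'=\sigma$. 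Injectivity of $B\mapsto\sigma$ and surjectivity of $\sigma\mapsto B_\sigma$ then follow formally, and I would add the remark that distinct geodesics yield distinct bisectors because the spine is recovered from the bisector.

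The step that needs the most care — and which I regard as the main obstacle — is showing that $B_\sigma$ is genuinely independent of the auxiliary choices, so that $\sigma\mapsto B_\sigma$ is a bona fide map. There are two ingredients. First, the complex geodesic containing $\sigma$ is unique: the two endpoints of $\sigma$ are distinct points of $\partial{\rm H^{n}_{\C}}$, and they span a unique complex geodesic, so $\Sigma$ is determined by $\sigma$ alone. Second, for every admissible $p_1\in\Sigma\smallsetminus\sigma$ the resulting bisector equals $\Pi_\Sigma^{-1}(\sigma)$, an expression that manifestly does not involve $p_1$. Both ingredients are available from the material of Section~1 together with the slice decomposition theorem, so once they are assembled the bijection follows; I would then close by noting that it is natural in the expected sense, namely equivariant under ${\rm PU}(n,1;\C)$ and compatible with the duality $B\leftrightarrow\sigma$ assigning to a bisector its spine.
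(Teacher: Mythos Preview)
Your proposal is correct and follows essentially the same approach as the paper: the paper's argument, given in the paragraph immediately preceding the theorem, sends a bisector to its intrinsic spine (using Goldman's theorem) and, conversely, builds $B_\sigma=\Pi_\Sigma^{-1}(\sigma)$ from a geodesic $\sigma$ via the unique complex geodesic $\Sigma\supset\sigma$ and the reflection $R_\sigma$, exactly as you do. Your write-up is simply a more careful elaboration of that outline, making explicit the well-definedness of $\sigma\mapsto B_\sigma$ and the verification that the two maps are inverse.
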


\medskip

Next, we recall so called the {\sl meridianal decomposition} of bisectors in complex hyperbolic space invented  by Goldman \cite{Gol}.
\medskip

The following result shows that bisectors decompose into totally real geodesic submanifolds as well as into complex totally geodesic submanifolds of complex dimension $n-1$.

\begin{theorem}\label{bcomp1}
Let $\sigma \in {\rm H^{n}_{\C}}$ be a real geodesic. For each $k\geq 2$, the bisector $B$ having spine $\sigma$ is the union of all $\R^k$-planes containing $\sigma$.
\end{theorem}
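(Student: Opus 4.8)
The plan is to choose coordinates adapted to the real spine so that the bisector is cut out by a single linear condition, and then to verify the two inclusions of the asserted equality by hand. Since $\sigma$ spans the complex geodesic $\Sigma$, the associated real $2$-plane $\tilde\sigma\subset\C^{n,1}$ carries an $\R$-valued Hermitian form of signature $(1,1)$ and $\tilde\Sigma=\tilde\sigma\otimes_{\R}\C$. Picking a real orthogonal basis $\{f,g\}$ of $\tilde\sigma$ with $\langle f,f\rangle=1$, $\langle g,g\rangle=-1$ and extending the isometry $f\mapsto e_1$, $g\mapsto e_{n+1}$ to $\C^{n,1}$ by Witt's theorem, I may assume we work in an orthogonal basis $e_1,\dots,e_{n+1}$ with $\tilde\sigma=\R e_1\oplus\R e_{n+1}$ and $\tilde\Sigma=\C e_1\oplus\C e_{n+1}$. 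As $\langle v,v\rangle<0$ forces $v_{n+1}\neq 0$, projection onto $\Sigma$ reads $\Pi_\Sigma([v])=[v_1:0:\cdots:0:v_{n+1}]$, the spine $\sigma$ is the real locus of $\Sigma$, and the Giraud--Mostow identity $B=\Pi_\Sigma^{-1}(\sigma)$ recalled above turns into
$$B=\bigl\{[v]\in{\rm H^{n}_{\C}}:\ v_1 v_{n+1}^{-1}\in\R\bigr\}.$$

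For the inclusion $\supseteq$, let $R=\pi(\tilde R)\cap{\rm H^{n}_{\C}}$ be an $\R^k$-plane with $\sigma\subset R$, so $\tilde R\supseteq\tilde\sigma$ is an $\R$-hyperbolic subspace. For any $v\in\tilde R$ the scalars $\langle e_1,v\rangle=v_1$ and $\langle e_{n+1},v\rangle=-v_{n+1}$ are real, because $e_1,e_{n+1}\in\tilde R$ and the form is $\R$-valued on $\tilde R$; hence $v_1,v_{n+1}\in\R$ and $[v]\in B$. Thus $R\subseteq B$.

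For the inclusion $\subseteq$, take $p=[v]\in B$ and rescale so that $v_{n+1}=1$; then $v_1\in\R$, and $u:=v-v_1e_1-e_{n+1}$ lies in the positive-definite subspace ${\rm span}_{\C}(e_2,\dots,e_n)$. I would complete $u$ to an orthonormal basis of that subspace beginning with $u/\mod{u}$ (any orthonormal basis if $u=0$), take $W$ to be the real span of the first $k-1$ of these basis vectors, and put $\tilde R=\R e_1\oplus\R e_{n+1}\oplus W$. This $\tilde R$ has real dimension $k+1$; the Hermitian form is $\R$-valued on it (it pairs $W$ trivially with $e_1$ and $e_{n+1}$, and equals $\delta_{ij}$ on the chosen basis of $W$) and is non-degenerate of signature $(k,1)$, so $\tilde R$ is $\R$-hyperbolic. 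By Proposition~\ref{ptgsm} the set $R=\pi(\tilde R)\cap{\rm H^{n}_{\C}}$ is an $\R^k$-plane; it contains $\sigma$ since $\tilde R\supseteq\tilde\sigma$, and it contains $p$ since $v=v_1e_1+e_{n+1}+u$ has all its coordinates in $\tilde R$ with real coefficients. This exhibits $B$ as the union of all $\R^k$-planes through $\sigma$.

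The one point that needs genuine care is this last construction: one must route a $(k-1)$-dimensional real subspace through the prescribed vector $u$ while keeping the Hermitian form real-valued, which is exactly why $W$ is built along an orthonormal frame adapted to $u$ rather than taken to be an arbitrary real subspace. Everything else is routine, and one should note that the statement tacitly presupposes $2\le k\le n$, since $\R^k$-planes exist in ${\rm H^{n}_{\C}}$ only for $k\le n$.
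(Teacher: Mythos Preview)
Your proof is correct. The paper does not supply its own argument for this theorem; it attributes the result to Goldman \cite{Gol} and remarks only that ``Goldman's proof of Theorem \ref{bcomp1} is based on a sufficiently complicate computation.'' Your approach is different in spirit: rather than invoking that computation, you normalize coordinates so that $\tilde\sigma=\R e_1\oplus\R e_{n+1}$, read off the bisector as the locus $v_1v_{n+1}^{-1}\in\R$ from the Giraud--Mostow slice description, and then verify the two inclusions by elementary linear algebra on $\R$-hyperbolic subspaces. This is cleaner and more self-contained than deferring to an external calculation, and it makes the role of Proposition~\ref{ptgsm} explicit.

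One small point deserves a sentence of justification: when you write ``$\sigma\subset R$, so $\tilde R\supseteq\tilde\sigma$,'' note that the $\R$-hyperbolic lift $\tilde R$ of an $\R^k$-plane is determined only up to a global phase $\tilde R\mapsto\tilde R\cdot\mu$ with $|\mu|=1$. After choosing $\mu$ so that $e_{n+1}\in\tilde R$, the reality of $\langle e_{n+1},\,\cdot\,\rangle$ on $\tilde R$ forces any lift of a second point of $\sigma$ to be a real multiple of its standard representative, whence $e_1\in\tilde R$ and indeed $\tilde\sigma\subset\tilde R$. With that normalization the $\supseteq$ direction goes through exactly as you wrote it. Your closing remark that the statement tacitly requires $k\le n$ is also well taken.
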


\medskip

Following Goldman, we refer to the $\R^n$-planes containing the spine as the {\sl meridians} of the bisector $B$.

\medskip

We remark that in contrast to the Mostow decomposition, which is a non-singular foliation, the meridianal decomposition is a singular foliation having the geodesic $\sigma$ as its singular set.

\medskip

Goldman's proof of Theorem \ref{bcomp1} is based on a sufficiently complicate computation. As a corollary of the existence of the meridianal decomposition, we have the following.

\begin{theorem} Bisectors in complex hyperbolic space are star-like with respect to any point in its real spine.
\end{theorem}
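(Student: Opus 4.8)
The plan is to derive the starlikeness of a bisector $B$ directly from the meridianal decomposition established in Theorem~\ref{bcomp1}. First I would fix a point $s_0$ in the real spine $\sigma$ of $B$ and an arbitrary point $p\in B$ with $p\neq s_0$; the goal is to show the geodesic segment $[s_0,p]$ lies entirely in $B$. By Theorem~\ref{bcomp1}, with $k=n$ (or $k=2$ if one only wants a $2$-dimensional meridian through $p$ and $\sigma$), there is a meridian $M$, i.e.\ an $\R^k$-plane, with $\sigma\subset M$ and $p\in M$: indeed $B$ is the union of all such meridians, so $p$ lies on at least one of them.

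Next I would invoke that $M$, being totally geodesic and isometric to $\rh{k}$, contains the unique geodesic through any two of its points; in particular the geodesic through $s_0$ and $p$, and hence the segment $[s_0,p]$, is contained in $M$. Since $M\subset B$ by construction, we get $[s_0,p]\subset B$. As $s_0\in\sigma$ and $p\in B$ were arbitrary, this shows $B$ is starlike with respect to every point of its real spine $\sigma$, which is the assertion.

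The only point requiring a little care is the case where $p$ itself lies on $\sigma$, but then $[s_0,p]\subset\sigma\subset B$ trivially since $\sigma$ is a geodesic and $s_0,p\in\sigma$; and the degenerate case $p=s_0$ is vacuous. There is no real obstacle here: the entire content is packaged in Theorem~\ref{bcomp1}, and the argument is the standard observation that a union of totally geodesic submanifolds through a common geodesic is starlike with respect to each point of that geodesic. I would therefore present the proof in just a few lines, emphasizing that it is an immediate corollary of the meridianal decomposition and of the fact (recorded in the discussion of totally geodesic submanifolds) that each meridian, being isometric to a real hyperbolic space, is geodesically convex and contains the geodesic joining any pair of its points.
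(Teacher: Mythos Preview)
Your proposal is correct and matches the paper's approach: the paper presents this theorem explicitly as a corollary of the meridianal decomposition (Theorem~\ref{bcomp1}) without further argument, and the analogous quaternionic starlikeness result later in the paper is proved by exactly the reasoning you give---pick a totally geodesic leaf of the decomposition containing both the basepoint on the spine and the arbitrary point $p$, then use geodesic convexity of that leaf.
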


\subsection{Bisectors in quaternionic hyperbolic space}

In this section, we will study bisectors in $n$- dimensional quaternionic hyperbolic space ${\rm H^{n}_{\Q}}$.

\medskip

As we have seen in the previous section, the orthogonal projections play an
important role in the study of bisectors. The first problem we consider is when the orthogonal projections onto totally geodesic submanifolds have totally geodesic fibers.

\subsubsection{Orthogonal projections and reflective submanifolds}

It is elementary that in real hyperbolic space ${\rm H^{n}_{\R}}$, the orthogonal projections onto totally geodesic submanifolds have totally geodesic fibers.
This is not true, in general, in hyperbolic spaces with non-constant sectional curvature.

\medskip

For instance, if $M$ is a totally geodesic submanifold of real dimension one, that is, a real geodesic,
in complex hyperbolic space ${\rm H^{2}_{\C}}$, then the fibers of the orthogonal projection onto $M$ are not totally geodesic, because in this case the fibers have real dimension $3$,
but we know that any proper totally geodesic submanifold in ${\rm H^{2}_{\C}}$ has real dimension $1$ or $2$.

\medskip

Another example: let $M$ be a totally real totally geodesic plane in ${\rm H^{3}_{\C}}$ , that is,
$M$ is isometric to ${\rm H^{2}_{\R}}$. Then, using again the classification of totally geodesic submanifolds in ${\rm H^{3}_{\C}}$, it is easy to prove that the fibers
of the orthogonal projection onto $M$ are not totally geodesic.

\medskip

One more example: let $M$ be a totally geodesic submanifold in quaternionic hyperbolic space ${\rm H^{2}_{\Q}}$ of complex type of real dimension $2$, that is, $M$ is isometric
to ${\rm H^{1}_{\C}}$. Then it can be shown that the fibers of the orthogonal projection onto $M$ are not totally geodesic.

\medskip

We will show that this problem is related to so-called reflective submanifolds of hyperbolic spaces.

\medskip

First, we recall the definition of reflective submanifolds.

\medskip

Let $N$ be a Riemannian manifold and $M$ a submanifold of $N$. Then $M$ is called {\sl reflective} if the
geodesic reflection of $N$ with respect to $M$ is a globally well-defined isometry of $N$.

\medskip

Since any reflective submanifold is a connected component of  the fixed point set of an isometry, it is totally geodesic.

\medskip

Also, we recall the definition of symmetric submanifolds.

\medskip
A submanifold $M$ of a Riemannian manifold $N$ is called {\sl symmetric} if for each point $p$ in $M$ there exists an isometry $I_p$ of $N$ such that

$$ I_p(p)=p,\ \ \  I_p(M)=M,\ \ \  (I_p)_* X =-X,\ \ \ (I_p)_* Y=Y$$
for all $X \in T_p M, \ Y \in \nu_p M.$

\medskip

Here we denote by $T_p M$ the tangent space of $M$ at $p$,  by $\nu_p M$ the normal space of $M$ at $p$, and by $(I_p)_*$ the differential of $I_p$.

\medskip

For symmetric spaces there is the following useful criterion.

\begin{prop} A totally geodesic submanifold of a simply connected Riemannian symmetric space is symmetric if and only if it is reflective.
\end{prop}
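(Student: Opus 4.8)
The plan is to prove the two implications separately. First I would record two standard facts about a (globally) Riemannian symmetric space $N$, to be used freely: every geodesic symmetry $s_p$ of $N$ is a globally defined isometry, and an isometry of a connected Riemannian manifold is completely determined by its $1$-jet (its value and differential) at a single point. I also recall that any connected complete totally geodesic submanifold $S$ of $N$ equals $\exp_q(T_qS)$ for each $q\in S$ — so two such submanifolds coincide as soon as they share a tangent space at a common point — that $M$ itself is complete (being totally geodesic in the complete $N$), and that the local geodesic reflection $r_M$ in $M$ is well defined on a tubular neighbourhood $U$ of $M$ by $r_M(\exp_m X)=\exp_m(-X)$ for $m\in M$ and $X\in\nu_mM$ small; by definition $M$ is reflective exactly when $r_M$ extends to a (necessarily unique, necessarily involutive) isometry of all of $N$.

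For the implication ``reflective $\Rightarrow$ symmetric'', let $\sigma$ denote the global geodesic reflection in $M$ and, for $p\in M$, set $I_p:=\sigma\circ s_p$. Clearly $I_p(p)=p$. Since $(s_p)_*=-\mathrm{id}$ on $T_pN$ preserves $T_pM$, the image $s_p(M)$ is a connected complete totally geodesic submanifold with tangent space $T_pM$ at $p$, hence $s_p(M)=M$; as $\sigma$ fixes $M$ pointwise, $I_p(M)=M$. Finally $(I_p)_*=\sigma_*\circ(-\mathrm{id})$ at $p$, which is $-\mathrm{id}$ on $T_pM$ (where $\sigma_*=\mathrm{id}$) and $+\mathrm{id}$ on $\nu_pM$ (where $\sigma_*=-\mathrm{id}$). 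Thus $I_p$ exhibits $M$ as a symmetric submanifold.

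For ``symmetric $\Rightarrow$ reflective'', fix $m_0\in M$, let $I$ be the isometry of $N$ furnished by the symmetry of $M$ at $m_0$, and set $\varphi:=I\circ s_{m_0}$. Then $\varphi(m_0)=m_0$, while $\varphi_*$ at $m_0$ equals $+\mathrm{id}$ on $T_{m_0}M$ and $-\mathrm{id}$ on $\nu_{m_0}M$. Step one: $\varphi$ fixes $M$ pointwise. It does so on an $M$-neighbourhood of $m_0$, because $M$ is totally geodesic and $\varphi_*=\mathrm{id}$ on $T_{m_0}M$, so $\varphi$ fixes the geodesics of $N$ issuing from $m_0$ tangentially to $M$, which fill out $M$ near $m_0$; since the fixed-point set of $\varphi$ is a closed totally geodesic submanifold and $M$ is connected, a routine connectedness argument propagates this to $\varphi|_M=\mathrm{id}_M$. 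In particular $\varphi(M)=M$, so for each $m\in M$ the differential $\varphi_*$ preserves $\nu_mM$; write $A_m:=\varphi_*|_{\nu_mM}\in\mathrm{O}(\nu_mM)$. Step two: $\varphi^2=\mathrm{id}_N$, because $\varphi^2$ fixes $m_0$ with identity differential there and hence is the identity by the $1$-jet rigidity above; consequently $A_m^2=\mathrm{id}$, i.e.\ each $A_m$ is an involution. Step three: since $A_{m_0}=-\mathrm{id}$, and $-\mathrm{id}$ is an isolated point of the set of involutions of $\mathrm{O}(V)$ for any Euclidean space $V$ (an involution within operator-norm distance $2$ of $-\mathrm{id}$ has no eigenvalue $+1$, hence equals $-\mathrm{id}$), the set $\{m\in M:A_m=-\mathrm{id}\}$ is open and closed in the connected $M$, so it is all of $M$. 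Therefore $\varphi$ is a global isometry of $N$ fixing $M$ pointwise with $\varphi_*|_{\nu_mM}=-\mathrm{id}$ for every $m\in M$; on the tubular neighbourhood $U$ it sends $\exp_m X$ to $\exp_m(-X)$, i.e.\ it restricts to $r_M$. Hence $r_M$ extends to the global isometry $\varphi$, and $M$ is reflective.

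The routine parts are the differential computations and the tubular-neighbourhood identification; the real work is concentrated in the passage from local to global in the second implication. The hard part will be the propagation $A_{m_0}=-\mathrm{id}\Rightarrow A_m\equiv-\mathrm{id}$: continuity of $m\mapsto A_m$ alone is not enough, one must first know $\varphi^2=\mathrm{id}$ so that each $A_m$ is an involution, and only then does the isolatedness of $-\mathrm{id}$ among involutions make the good locus open. Connectedness of $M$ enters both propagation steps, and connectedness of $N$ (part of being a symmetric space) underlies the $1$-jet rigidity; simple connectedness of $N$ plays no further role beyond placing us in the globally symmetric setting.
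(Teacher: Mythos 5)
Your argument is correct, but note that the paper offers no proof of this proposition at all: it is stated as a known criterion (standard in the theory of reflective and symmetric submanifolds going back to Leung \cite{Leu}), so there is no in-paper argument to compare with. What you supply is the standard proof, and both directions are sound: composing the global reflection with the geodesic symmetry $s_p$ gives the required $I_p$, and conversely $\varphi=I_{m_0}\circ s_{m_0}$ is correctly shown to fix $M$ pointwise, to be involutive by $1$-jet rigidity, and to act as $-\mathrm{id}$ on every normal space via the isolatedness of $-\mathrm{id}$ among orthogonal involutions (an equally quick alternative: the component of ${\rm Fix}(\varphi)$ through $m_0$ is totally geodesic with tangent space the $+1$-eigenspace of $\varphi_*$, which at $m_0$ is exactly $T_{m_0}M$, forcing $A_m=-\mathrm{id}$ along $M$). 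You also correctly observe that simple connectedness is never used; it is merely part of the standing hypotheses here. The one inaccurate side remark is the claim that $M$ is complete ``being totally geodesic in the complete $N$'': total geodesy alone does not give completeness (an open geodesic arc is totally geodesic), and both directions of your proof use $M=\exp_p(T_pM)$, which needs completeness. This is harmless because the definitions of reflective and symmetric submanifolds in this context tacitly concern complete (closed) connected totally geodesic submanifolds --- and the submanifolds arising in the paper's classification are of this kind --- but you should state completeness as a hypothesis rather than derive it.
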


\medskip

As a reflective submanifold is symmetric, at each point there exists a complementary totally geodesic submanifold normal to it. In symmetric spaces this normal submanifold
is also reflective. Since all hyperbolic spaces are symmetric, we have that this result holds for any hyperbolic space.

\medskip

Let $M$ be a reflective submanifold in a symmetric simply connected Riemannian space $N$, and $p \in M$. Let $M_p^\bot$ denote its {\sl orthogonal complement} at $p$, that is, the connected,
complete, totally geodesic submanifold of $N$ with $TM_p^\bot =\nu_p M$.

\medskip

The following theorems describe all reflective submanifolds in complex and quaternionic hyperbolic spaces, see \cite{Leu}.

\medskip

\begin{theorem}
Let $M$ be a reflective submanifold in complex hyperbolic space ${\rm H^{n}_{\C}}$. Then $M$ is either isometric to ${\rm H^{k}_{\C}}$, $k=1,\ldots, n-1$, or to ${\rm H^{n}_{\R}}$.
\end{theorem}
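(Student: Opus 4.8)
The plan is to reduce the classification to the known list of totally geodesic submanifolds of ${\rm H^{n}_{\C}}$ (Proposition \ref{ptgsm}), exploiting the fact recalled just above the statement that a reflective submanifold is symmetric, so that at each of its points it admits a \emph{totally geodesic} normal submanifold which is again reflective. Throughout one discards the trivial cases where $M$ is a point or all of ${\rm H^{n}_{\C}}$.

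First I would fix a point $o\in M$ and identify $T_o{\rm H^{n}_{\C}}$ with $\C^{n}$ carrying its usual complex structure (multiplication by $i$) and inner product, so that $\mathfrak m:=T_oM$ becomes a real subspace of $\C^{n}$ which is a Lie triple system. Being reflective, $M$ is totally geodesic, so by Proposition \ref{ptgsm} and the list of totally geodesic submanifolds of ${\rm H^{n}_{\C}}$ following it, either $M$ is isometric to ${\rm H^{k}_{\C}}$, in which case $\mathfrak m$ is a complex $k$-dimensional subspace of $\C^{n}$, or $M$ is isometric to ${\rm H^{k}_{\R}}$, in which case $\mathfrak m$ is a totally real $k$-dimensional subspace, i.e. $\mathfrak m\cap i\mathfrak m=\{0\}$.

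Next, since the normal submanifold $M^{\bot}_o$ is totally geodesic with $T_oM^{\bot}_o=\nu_oM=\mathfrak m^{\bot}$ (orthogonal complement in $\C^{n}$), Proposition \ref{ptgsm} forces $\mathfrak m^{\bot}$ to be again either a complex subspace or a totally real subspace, and I would use this to cut the list down. If $\mathfrak m=\C^{k}$, then $\mathfrak m^{\bot}=\C^{n-k}$ is complex and there is no obstruction; excluding the trivial cases this leaves $1\le k\le n-1$. If $\mathfrak m=\R^{k}\subseteq\R^{n}\subseteq\C^{n}$, then $\mathfrak m^{\bot}=\R^{n-k}\oplus i\R^{n}=\C^{n-k}\oplus i\R^{k}$; this subspace is complex only when $\R^{k}\subseteq\mathfrak m^{\bot}$, which fails for $k\ge 1$, and it is totally real only when $\mathfrak m^{\bot}\cap i\mathfrak m^{\bot}=\C^{n-k}=\{0\}$, which fails for $k\le n-1$. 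Hence a totally real $\mathfrak m$ can occur only for $k=n$, i.e. $M\cong{\rm H^{n}_{\R}}$. Thus the only possibilities are ${\rm H^{k}_{\C}}$ with $1\le k\le n-1$ and ${\rm H^{n}_{\R}}$.

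It then remains to verify that these submanifolds really are reflective, which I would do by writing down the geodesic reflections explicitly in the ball model $B\subseteq\C^{n}$: for the canonical ${\rm H^{k}_{\C}}$ (given by $w_{k+1}=\dots=w_n=0$) the involution $(w_1,\dots,w_n)\mapsto(w_1,\dots,w_k,-w_{k+1},\dots,-w_n)$, induced by the diagonal element of ${\rm U}(n,1;\C)$ with entries $(1,\dots,1,-1,\dots,-1,1)$, restricts near each of its fixed points to the geodesic reflection in ${\rm H^{k}_{\C}}$; and for ${\rm H^{n}_{\R}}=B\cap\R^{n}$ the (anti-holomorphic) involution $(w_1,\dots,w_n)\mapsto(\bar w_1,\dots,\bar w_n)$ is an isometry of ${\rm H^{n}_{\C}}$ restricting to the geodesic reflection in ${\rm H^{n}_{\R}}$. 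The main obstacle is the structural input used in the middle of the argument — namely that reflectivity of $M$ is equivalent to $\mathfrak m^{\bot}$ being a Lie triple system (equivalently, that the normal submanifold of a reflective submanifold of a symmetric space is totally geodesic); I would take this from the general theory of symmetric/reflective submanifolds recalled before the statement, so that the rest reduces to the elementary comparison of complex versus totally real subspaces above and needs no further computation inside $\mathfrak{su}(n,1)$.
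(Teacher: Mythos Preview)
The paper does not actually prove this theorem: it is quoted from Leung's general classification of reflective submanifolds of Riemannian symmetric spaces, with the sentence ``The following theorems describe all reflective submanifolds in complex and quaternionic hyperbolic spaces, see \cite{Leu}.'' So there is no in-paper argument to compare against; the paper simply imports the result.

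Your proposal, by contrast, gives a self-contained argument tailored to ${\rm H}^{n}_{\C}$. The logic is sound: you use Proposition \ref{ptgsm} to list the totally geodesic candidates ${\rm H}^{k}_{\C}$ and ${\rm H}^{k}_{\R}$, then use the structural fact (recalled in the paper just before the statement) that for a reflective $M$ the normal space $\mathfrak m^{\bot}$ must again be a Lie triple system, hence again complex or totally real; your computation $\mathfrak m^{\bot}=\C^{n-k}\oplus i\R^{k}$ for the totally real case correctly rules out $1\le k\le n-1$, and your explicit involutions show the surviving cases are genuinely reflective. One small point of phrasing: you describe ``totally real'' as $\mathfrak m\cap i\mathfrak m=\{0\}$, which is weaker than what Proposition \ref{ptgsm} actually gives (namely that the Hermitian form is $\R$-valued on the lift, i.e.\ $\mathfrak m\perp i\mathfrak m$); however, since your exclusion step only uses the contrapositive ($\mathfrak m^{\bot}\cap i\mathfrak m^{\bot}\neq\{0\}\Rightarrow$ not totally real), the weaker condition suffices and the argument goes through. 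In short, your route is more informative than the paper's bare citation and is correct as written, relying only on the general reflective/symmetric facts the paper already states.
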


\begin{theorem} 
Let $M$ be a reflective submanifold in quaternionic  hyperbolic space ${\rm H^{n}_{\Q}}$. Then $M$ is either isometric  to ${\rm H^{k}_{\Q}}$, $k=1,\ldots, n-1$, or to ${\rm H^{n}_{\C}}$.
\end{theorem}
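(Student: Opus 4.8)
The plan is to classify the reflective submanifolds of $\rm H^n_\Q$ by working at the level of the Lie algebra $\mathfrak{sp}(n,1)$ and using the correspondence between reflective submanifolds of a symmetric space and certain involutive automorphisms that commute with the Cartan involution. Concretely, fix a base point $p\in M$ and write $\mathfrak{sp}(n,1)=\mathfrak{k}\oplus\mathfrak{p}$ for the Cartan decomposition at $p$, so that $\mathfrak{p}$ is identified with $T_p\rm H^n_\Q\cong \Q^n$. A reflective submanifold $M$ through $p$ is, by the criterion quoted before Theorem~2.?, a symmetric submanifold; equivalently $\mathfrak{p}$ splits orthogonally as $\mathfrak{p}=\mathfrak{p}'\oplus\mathfrak{p}''$ with $T_pM$ identified with $\mathfrak{p}'$, $[\mathfrak{p}',\mathfrak{p}']\subseteq\mathfrak{k}'$ the isotropy of $M$, and both $\mathfrak{p}'$ and $\mathfrak{p}''$ are Lie triple systems invariant under the holonomy of the ambient isotropy group. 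So the first step is to translate ``reflective'' into: $\mathfrak{p}'$ is an $\mathrm{Sp}(1)$-invariant (here $\mathrm{Sp}(1)$ acts by right scalar multiplication by unit quaternions) real subspace of $\Q^n$ whose orthogonal complement is again such a subspace, \emph{and} the geodesic reflection is a global isometry — this last condition is what forces the stronger conclusion.

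The second step is the actual linear-algebra classification. Because the geodesic reflection must be an \emph{isometry} of $\rm H^n_\Q$, the splitting $\mathfrak{p}=\mathfrak{p}'\oplus\mathfrak{p}''$ must be induced by an element of $\mathrm{Sp}(n)\cdot\mathrm{Sp}(1)$ (the isotropy representation) that is an involution preserving the quaternionic structure up to the $\mathrm{Sp}(1)$-factor. There are exactly two ways this can happen. Either the involution lies in $\mathrm{Sp}(n)$ and commutes with the full right $\Q$-action: then $\mathfrak{p}'$ is a \emph{quaternionic} subspace $\Q^k\subseteq\Q^n$, giving $M\cong\rm H^k_\Q$. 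Or the involution acts nontrivially on the $\mathrm{Sp}(1)$ factor — it must then be (conjugate to) right multiplication by an imaginary unit $a$ on $\mathfrak{p}''$ and trivial on $\mathfrak{p}'$, i.e. the $\pm 1$ eigenspace decomposition of $v\mapsto vav^{-1}\cdot(\text{something})$; invoking Proposition~\ref{qua3} exactly as in the proof of Proposition~\ref{fixp}, the $+1$ part is a $\C(a)$-subspace of $\Q^n$ of full $\C(a)$-dimension $n$, and its intersection with $\rm H^n_\Q$ is a totally geodesic submanifold of complex type of maximal dimension, hence isometric to $\rm H^n_\C$. No intermediate case survives because a proper $\C(a)$-subspace would not have reflective (symmetric) orthogonal complement inside $\rm H^n_\Q$ — the complement would fail to be a Lie triple system closed under the residual $\mathrm{Sp}(1)$-action. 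This dichotomy is precisely the statement of the theorem.

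The third step is bookkeeping: verify that each candidate actually \emph{is} reflective (the converse direction), which is easy — for $\rm H^k_\Q$ one writes down the block-diagonal involution $\mathrm{diag}(E_{k+1},-E_{n-k})$ sitting inside $\mathrm{U}(n,1;\Q)$, whose fixed-point set on $\rm H^n_\Q$ is exactly the canonical $\rm H^k_\Q$ and whose geodesic reflection it realizes; for $\rm H^n_\C$ one uses the isometry $\sigma$ of $\rm H^n_\C\subset\rm H^n_\Q$ induced by an appropriate orientation-reversing quaternionic conjugation, composed with $L_a$-type maps of Proposition~\ref{fixp}, to see that the geodesic reflection of $\rm H^n_\Q$ in the canonical complex submanifold is globally defined. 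Finally one notes $k$ ranges only up to $n-1$ in the quaternionic family because $k=n$ gives the whole space, which is excluded from the notion of a proper reflective submanifold; the complex case $\rm H^n_\C$ has real codimension $n$ and is genuinely proper.

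I expect the main obstacle to be the second step — ruling out the intermediate $\C(a)$-subspaces and, more generally, showing that no ``mixed'' splitting (part quaternionic, part complex-type) can give a reflective submanifold. The point is that reflectivity is much stronger than mere total geodesy: Proposition~\ref{ptgsm} allows many totally geodesic submanifolds (all the $\rm H^k_\C$ for $k<n$, the real forms $\rm H^k_\R$, the exotic $3$-dimensional ones), but the requirement that the geodesic \emph{reflection} be a global isometry — equivalently that the normal bundle also be a reflective submanifold — collapses the list to just the two families. Carrying this out cleanly requires understanding how the right $\mathrm{Sp}(1)$-action interacts with the orthogonal complement, and this is where one genuinely uses that we are over the noncommutative field $\Q$; the analogous statement over $\C$ (the previous theorem) is strictly easier because there is no $\mathrm{Sp}(1)$-factor to contend with. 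One can shortcut some of this by simply citing Leung's classification \cite{Leu}, which is in fact what the paper does; the sketch above is the route one would take to reprove it from scratch.
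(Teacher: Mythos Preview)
The paper does not give its own proof of this theorem: it simply states the result and refers to Leung \cite{Leu} for the classification. You are evidently aware of this, since your final paragraph says so explicitly. So there is nothing to compare at the level of the paper's own argument --- the paper has none.

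Your sketch is, in outline, the standard route to Leung's result for this particular symmetric space: translate reflectivity into an involutive automorphism of $\mathfrak{sp}(n,1)$ commuting with the Cartan involution, hence into an involution in the isotropy $\mathrm{Sp}(n)\cdot\mathrm{Sp}(1)$ acting on $\mathfrak{p}\cong\Q^n$, and then split into the two cases according to whether the $\mathrm{Sp}(1)$-component is trivial or not. That dichotomy is correct and does lead to the two families $\mathrm{H}^k_\Q$ and $\mathrm{H}^n_\C$.

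Where your sketch is loose is precisely the place you flag yourself: the exclusion of proper $\C(a)$-subspaces and of ``mixed'' splittings. The sentence ``the complement would fail to be a Lie triple system closed under the residual $\mathrm{Sp}(1)$-action'' is the right intuition but not yet an argument; what one actually uses is that an involution in $\mathrm{Sp}(n)\cdot\mathrm{Sp}(1)$ with nontrivial $\mathrm{Sp}(1)$-part is conjugate to $(\mathrm{id},a)$ with $a$ an imaginary unit, and the $+1$-eigenspace of right multiplication by $a$ (composed with conjugation) on $\Q^n$ is automatically all of $\C(a)^n$ --- there is no freedom to pick a smaller $\C(a)$-subspace once the $\mathrm{Sp}(1)$-part is fixed. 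Similarly, your description of the involution as ``right multiplication by an imaginary unit $a$ on $\mathfrak{p}''$ and trivial on $\mathfrak{p}'$'' is not quite how it works: the involution acts uniformly on all of $\mathfrak{p}$, and the eigenspace decomposition is what produces $\mathfrak{p}'$ and $\mathfrak{p}''$. These are repairable, but in a written proof they would need to be made precise; as a plan the proposal is sound.
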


\begin{co} 
Let $M$ be a reflective submanifold in complex hyperbolic space ${\rm H^{n}_{\C}}$, and $p \in M$.  Then $M_p^\bot$ is reflective, and it is isometric to ${\rm H^{n-k}_{\C}}$ if
$M$ is isometric to ${\rm H^{k}_{\C}},$ and $M_p^\bot$ is isometric to ${\rm H^{n}_{\R}}$ if $M$ is isometric to ${\rm H^{n}_{\R}}$.
\end{co}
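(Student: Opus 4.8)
The plan is to read this off from the classification of reflective submanifolds of ${\rm H^{n}_{\C}}$ together with the general fact, recorded just above, that in a symmetric space the orthogonal complement of a reflective submanifold is again reflective. First I would note that ${\rm H^{n}_{\C}}$ is a simply connected Riemannian symmetric space, so the reflective submanifold $M$ is symmetric; hence at the point $p$ the normal space $\nu_p M$ is the tangent space of a complete totally geodesic submanifold $M_p^\bot$, and, being the orthogonal complement of a reflective submanifold in a symmetric space, $M_p^\bot$ is itself reflective. Applying the classification of reflective submanifolds of ${\rm H^{n}_{\C}}$ stated above, $M_p^\bot$ is isometric either to some ${\rm H^{m}_{\C}}$ or to ${\rm H^{n}_{\R}}$; what remains is to decide which case occurs and to compute $m$.

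Next I would do the dimension bookkeeping: $\dim_\R {\rm H^{n}_{\C}} = 2n$, so $\dim_\R M_p^\bot = \dim_\R \nu_p M = 2n - \dim_\R M$. If $M$ is isometric to ${\rm H^{k}_{\C}}$ this gives $\dim_\R M_p^\bot = 2(n-k)$, and if $M$ is isometric to ${\rm H^{n}_{\R}}$ it gives $\dim_\R M_p^\bot = n$. Since $\dim_\R {\rm H^{m}_{\C}} = 2m$ and $\dim_\R {\rm H^{n}_{\R}} = n$, the dimension alone already forces the stated conclusion except when $n$ is even, where ${\rm H^{n/2}_{\C}}$ and ${\rm H^{n}_{\R}}$ have the same real dimension. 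To finish in all cases I would invoke the complex structure $J$ of ${\rm H^{n}_{\C}}$. If $M$ is isometric to ${\rm H^{k}_{\C}}$ then, by the classification of totally geodesic submanifolds, $M$ is a complex totally geodesic submanifold, so $T_p M$ is $J$-invariant; hence $\nu_p M$ is $J$-invariant, $M_p^\bot$ is a complex totally geodesic submanifold, and it must be isometric to ${\rm H^{n-k}_{\C}}$. If $M$ is isometric to ${\rm H^{n}_{\R}}$ then $T_p M$ is Lagrangian, i.e. $\nu_p M = J\,T_p M$, which is again totally real of real dimension $n$, so $M_p^\bot$ is a totally real totally geodesic submanifold of maximal dimension and must be isometric to ${\rm H^{n}_{\R}}$.

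An equivalent and perhaps cleaner way to organize the whole argument is at the level of linear algebra: since $M$ is reflective it is a component of the fixed-point set of an isometric involution $R$ of ${\rm H^{n}_{\C}}$, which lifts to an (anti)unitary involution $\tilde R$ of $\C^{n,1}$; the $\pm 1$-eigenspaces of $\tilde R$ split $\C^{n,1}$ orthogonally with respect to the Hermitian form, with the $+1$-part $V_+$ containing a negative lift $v$ of $p$ and carrying the appropriate indefinite signature, so $M = \pi(\P(V_+)) \cap {\rm H^{n}_{\C}}$, while $M_p^\bot$ is cut out by $\C v \oplus V_-$ in the linear case and by $\R v \oplus (iW \cap v^\perp)$ in the anti-linear (totally real) case, $W$ being the real form fixed by $\tilde R$. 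Reading off the signature of that subspace and whether the involution fixing it is linear or anti-linear gives both the reflectivity of $M_p^\bot$ and its identification as ${\rm H^{n-k}_{\C}}$ or ${\rm H^{n}_{\R}}$. The only genuinely delicate point, in either approach, is the one flagged above: the naive count of real dimensions does not separate the complex and totally real cases when $n$ is even, and one must use the $J$-invariance (equivalently, the (anti)linearity of the lift $\tilde R$) to conclude; everything else is routine bookkeeping with Hermitian signatures.
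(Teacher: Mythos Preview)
The paper gives no explicit proof of this corollary; it is stated as an immediate consequence of Leung's classification (Theorem~2.6) together with the general remark, recorded just before, that in a symmetric space the orthogonal complement of a reflective submanifold is again reflective. Your argument is correct and is precisely the natural elaboration of that implicit reasoning, with the added care of resolving the even-$n$ ambiguity via $J$-invariance---a point the paper simply glosses over.
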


\begin{co} 
Let $M$ be a reflective submanifold in quaternionic hyperbolic space ${\rm H^{n}_{\Q}}$. Then $M_p^\bot$ is reflective, and it is isometric to ${\rm H^{n-k}_{\Q}}$ if
$M$ is isometric to ${\rm H^{k}_{\Q}}$, and $M_p^\bot$  is isometric to ${\rm H^{n}_{\C}}$ if $M$ is isometric to ${\rm H^{n}_{\C}}$.
\end{co}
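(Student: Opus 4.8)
The plan is to combine the classification of reflective submanifolds of ${\rm H^{n}_{\Q}}$ stated above with the fact, already recorded, that in a simply connected symmetric space the orthogonal complement $M_p^\bot$ of a reflective submanifold is again reflective. Hence $M_p^\bot$ is isometric to some ${\rm H^{j}_{\Q}}$ or to ${\rm H^{n}_{\C}}$, and it remains only to identify it and to compute $j$. Since $T_pM\oplus\nu_pM=T_p{\rm H^{n}_{\Q}}$, a real dimension count gives $\dim_\R M+\dim_\R M_p^\bot=4n$; this fixes the answer except for the numerical coincidence $\dim_\R{\rm H^{j}_{\Q}}=4j=2n=\dim_\R{\rm H^{n}_{\C}}$, which occurs when $n=2j$ (already inside ${\rm H^{2}_{\Q}}$ the candidates ${\rm H^{1}_{\Q}}$ and ${\rm H^{2}_{\C}}$ have the same real dimension $4$ without being isometric). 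Resolving this coincidence is the one substantial point of the proof, and the tool for it is the quaternionic structure: any totally geodesic submanifold of ${\rm H^{n}_{\Q}}$ isometric to some ${\rm H^{j}_{\Q}}$ has, at each of its points, a \emph{quaternionic} tangent space, i.e. a subspace invariant under all three almost complex structures spanning the quaternionic structure of ${\rm H^{n}_{\Q}}$, whereas the tangent space of one isometric to ${\rm H^{n}_{\C}}$ is preserved by only one of them.

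Suppose first that $M\cong{\rm H^{k}_{\Q}}$. Here I would argue directly, without even invoking the classification. Write $M=\P(W)\cap{\rm H^{n}_{\Q}}$ with $W\subset\Q^{n,1}$ a $\Q$-hyperbolic subspace of signature $(k,1)$, and let $\ell\subset W$ be the negative line with $\pi(\ell)=p$. Let $W^\perp$ be the $\Psi$-orthogonal complement of $W$ in $\Q^{n,1}$; it is positive definite of quaternionic dimension $n-k$ and is contained in $\ell^\perp$. Identifying $T_p{\rm H^{n}_{\Q}}$ with ${\rm Hom}_\Q(\ell,\ell^\perp)$ and $T_pM$ with ${\rm Hom}_\Q(\ell,W\cap\ell^\perp)$, and using the orthogonal decomposition $\ell^\perp=(W\cap\ell^\perp)\oplus W^\perp$, one reads off $\nu_pM={\rm Hom}_\Q(\ell,W^\perp)$. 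Now set $U=\ell\oplus W^\perp$: this is a $\Q$-hyperbolic subspace of $\Q^{n,1}$ of signature $(n-k,1)$, so by Proposition \ref{ptgsm} and the discussion following it $\P(U)\cap{\rm H^{n}_{\Q}}$ is a totally geodesic submanifold, it passes through $p$, it is isometric to ${\rm H^{n-k}_{\Q}}$, and its tangent space at $p$ is ${\rm Hom}_\Q(\ell,W^\perp)=\nu_pM$. Since a complete totally geodesic submanifold is determined by one of its points together with its tangent space there, we conclude $M_p^\bot=\P(U)\cap{\rm H^{n}_{\Q}}\cong{\rm H^{n-k}_{\Q}}$, which also re-exhibits $M_p^\bot$ as reflective.

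Suppose now that $M\cong{\rm H^{n}_{\C}}$. By the corollary to the Witt theorem every totally geodesic submanifold of ${\rm H^{n}_{\Q}}$ isometric to ${\rm H^{n}_{\C}}$ is globally equivalent to the canonical complex hyperbolic submanifold $\P(\C^{n+1})\cap{\rm H^{n}_{\Q}}$, and since $I(M)$ acts transitively on $M$, I may assume $M=\P(\C^{n+1})\cap{\rm H^{n}_{\Q}}$ and $p=\pi(e_{n+1})$. In the ball model $T_p{\rm H^{n}_{\Q}}$ is $\Q^n$, the quaternionic structure at $p$ is spanned by the three right multiplications $R_i,R_j,R_k$, and $T_pM=\C(i)^n=\R^n+\R^n i$, so that $\nu_pM=\R^n j+\R^n k$. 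Since $R_j(\nu_pM)=\R^n+\R^n i=T_pM\neq\nu_pM$, the normal space $\nu_pM$ is not a quaternionic subspace, whence $M_p^\bot$ cannot be isometric to any ${\rm H^{j}_{\Q}}$; being reflective, it must then be isometric to ${\rm H^{n}_{\C}}$ by the classification theorem. (When $n$ is odd this last step is superfluous, since the dimension count $\dim_\R M_p^\bot=2n$ already excludes every ${\rm H^{j}_{\Q}}$.) As indicated, the one genuine obstacle is precisely this even-$n$ ambiguity in the dimension count, which the remark on the quaternionic structure settles; everything else reduces to the identification of tangent spaces in the projective model together with the classification of reflective submanifolds already available.
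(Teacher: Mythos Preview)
Your argument is correct. In the paper this corollary is stated without proof: the authors simply combine Leung's classification of reflective submanifolds in ${\rm H^{n}_{\Q}}$ with the general fact, recorded just above, that in a simply connected symmetric space the orthogonal complement $M_p^\bot$ of a reflective submanifold is again reflective, and leave the identification of $M_p^\bot$ implicit.

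Your treatment is therefore more explicit and, in one respect, more careful than the paper's. In the case $M\cong{\rm H^{k}_{\Q}}$ you bypass the classification entirely and exhibit $M_p^\bot$ directly as $\P(\ell\oplus W^\perp)\cap{\rm H^{n}_{\Q}}$; this is a clean self-contained argument. In the case $M\cong{\rm H^{n}_{\C}}$ you correctly isolate the one nontrivial point that the paper glosses over: when $n$ is even the dimension count $\dim_\R M_p^\bot=2n$ does not by itself rule out $M_p^\bot\cong{\rm H^{n/2}_{\Q}}$, and you resolve this by observing that $\nu_pM$ is not closed under the full quaternionic scalar action (right multiplication by $j$ carries it onto $T_pM$), whereas the tangent space of any ${\rm H^{j}_{\Q}}$-type totally geodesic submanifold is a $\Q$-subspace. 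This is exactly the right invariant to distinguish the two candidates, and it is consistent with the paper's convention of right $\Q$-vector spaces.
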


All this implies the following results.

\begin{theorem} 
Let $M$ be a totally geodesic submanifold in complex hyperbolic space ${\rm H^{n}_{\C}}$. Then the fibers of the orthogonal projection onto $M$ are
totally geodesic if and only if $M$ is either isometric to ${\rm H^{k}_{\C}}$, $k=1,\ldots, n-1$, or to ${\rm H^{n}_{\R}}$.
\end{theorem}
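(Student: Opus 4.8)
The plan is to describe the fibers of the orthogonal projection $\Pi_M\colon{\rm H^{n}_{\C}}\to M$ explicitly as normal submanifolds, reduce the totally geodesic question for them to curvature-invariance of the normal spaces via Cartan's theorem, and then settle this by an elementary dichotomy based on the classification of totally geodesic submanifolds in Proposition~\ref{ptgsm}. Throughout, $M$ is a proper totally geodesic submanifold, $p\in M$, $T_pM$ and $\nu_p M$ are its tangent and normal spaces at $p$, and $J$ is the complex structure on $T_p{\rm H^{n}_{\C}}\cong\C^n$.

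\emph{Step 1: identifying the fibers.} Since ${\rm H^{n}_{\C}}$ is a Hadamard manifold and $M$ is totally geodesic, $M$ is closed and convex, so every $q$ has a unique nearest point $\Pi_M(q)\in M$, and $x\mapsto d(q,x)^2$ is strictly convex along $M$. A first-variation argument together with this convexity shows that $\Pi_M(q)=p$ if and only if the geodesic from $q$ to $p$ meets $M$ orthogonally at $p$; hence $\Pi_M^{-1}(p)=\exp_p(\nu_p M)$ for every $p\in M$. By the corollary to the Witt theorem recalled above, the stabilizer of $M$ in $\isom{{\rm H^{n}_{\C}}}$ acts transitively on $M$ and carries fibers to fibers, so all fibers of $\Pi_M$ are mutually congruent. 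Therefore the fibers of $\Pi_M$ are totally geodesic if and only if $\exp_p(\nu_p M)$ is totally geodesic for one, equivalently every, $p\in M$.

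\emph{Step 2: reduction and dichotomy.} By a classical theorem of Cartan, $\exp_p(\nu_p M)$ is a complete totally geodesic submanifold exactly when $\nu_p M$ is curvature-invariant (a Lie triple system), and $T_pM$ is already curvature-invariant since $M$ is totally geodesic. By Proposition~\ref{ptgsm}, the tangent spaces at $p$ of the totally geodesic submanifolds of ${\rm H^{n}_{\C}}$ --- the copies of ${\rm H^{k}_{\C}}$ and of ${\rm H^{k}_{\R}}$ --- are precisely the complex subspaces and the totally real subspaces ($V$ with $JV\perp V$) of $\C^n$; hence $\nu_p M$ is curvature-invariant if and only if it is complex or totally real. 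Since $J$ is orthogonal, $\nu_p M=(T_pM)^{\perp}$ is complex if and only if $T_pM$ is complex, and a totally real subspace of $\C^n$ has real dimension at most $n$. If $T_pM$ is complex, then so is $\nu_p M$, which is therefore curvature-invariant, and $M$ is isometric to ${\rm H^{k}_{\C}}$ with $1\le k\le n-1$. If $T_pM$ is totally real of real dimension $k$, then $\dim_{\R}\nu_p M=2n-k$, which can be at most $n$ only when $k=n$; in that case $T_pM$ is maximal totally real, $\nu_p M=J(T_pM)$ is again maximal totally real, hence curvature-invariant, and $M$ is isometric to ${\rm H^{n}_{\R}}$, whereas for $k<n$ the space $\nu_p M$ is neither complex nor totally real, so the fiber $\exp_p(\nu_p M)$ is not totally geodesic. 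This proves both implications of the theorem.

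\emph{Remarks and the hard point.} Steps 1 and 2 say precisely that the fibers of $\Pi_M$ are totally geodesic if and only if $M$ is reflective, the fiber over $p$ then being the orthogonal complement $M_p^{\bot}$; so one may alternatively replace the dichotomy of Step 2 by Leung's classification \cite{Leu} of the reflective submanifolds of ${\rm H^{n}_{\C}}$ recalled above, together with the identity $\Pi_M^{-1}(p)=M_p^{\bot}$ for reflective $M$. The points that require genuine care are the identification $\Pi_M^{-1}(p)=\exp_p(\nu_p M)$ --- uniqueness of the nearest point together with the orthogonality characterization of it --- and the Cartan correspondence between curvature-invariant subspaces and complete totally geodesic submanifolds; granting these, the remaining dichotomy is routine, and I do not expect a serious obstacle beyond this bookkeeping.
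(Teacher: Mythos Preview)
Your proof is correct and takes a genuinely different route from the paper. The paper does not give a detailed argument; it simply records the theorem as a consequence of Leung's classification of reflective submanifolds \cite{Leu} together with the fact that the orthogonal complement $M_p^{\bot}$ of a reflective submanifold is itself reflective, the implicit link being that the fiber $\Pi_M^{-1}(p)$ coincides with $M_p^{\bot}$ and is totally geodesic exactly when $M$ is reflective. You make this link explicit in Step~1 and then, rather than appealing to Leung, argue directly: by Cartan's Lie-triple criterion and Proposition~\ref{ptgsm} the curvature-invariant subspaces of $T_p{\rm H^{n}_{\C}}$ are precisely the complex and the totally real ones, and an elementary dimension count on $\nu_p M$ settles which $M$ have curvature-invariant normal space. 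This is more self-contained --- it replaces a general classification theorem for symmetric spaces by a short dichotomy in $\C^n$ --- and you even flag the paper's approach as an alternative in your closing remarks. What the paper's route buys is uniformity: the quaternionic analogue (Theorem~\ref{refl1}) follows from the identical citation, whereas your direct argument would require a longer case analysis over $\Q$.
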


\begin{theorem} 
\label{refl1} Let $M$ be a totally geodesic submanifold in quaternionic hyperbolic space ${\rm H^{n}_{\Q}}$. Then the fibers of the orthogonal projection onto $M$ are
totally geodesic if and only if $M$  is either isometric to ${\rm H^{k}_{\Q}}$, $k=1,\ldots, n-1$, or to ${\rm H^{n}_{\C}}$.
\end{theorem}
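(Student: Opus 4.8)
Write $\Pi_M:{\rm H^{n}_{\Q}}\to M$ for the orthogonal (nearest-point) projection onto $M$. The plan is to prove the equivalence by relating the total geodesy of the fibers of $\Pi_M$ to reflectivity of $M$, and then to read off the explicit list from the classification of reflective submanifolds of ${\rm H^{n}_{\Q}}$ quoted above (see \cite{Leu}). Throughout one assumes $n\geq 2$ and that $M$ is a proper totally geodesic submanifold of positive dimension --- the degenerate cases $M$ a point and $M={\rm H^{n}_{\Q}}$ being tacitly excluded by the stated range $k=1,\dots,n-1$. I will also use, without further comment, that ${\rm H^{n}_{\Q}}$ is simply connected, complete and of nonpositive curvature, so that for a complete totally geodesic --- hence convex --- submanifold $S$ the projection $\Pi_S$ is single valued and the minimizing geodesic from a point to its $\Pi_S$-image meets $S$ orthogonally.

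For the ``if'' direction, I would first observe that, by the classification of reflective submanifolds, the submanifolds isometric to ${\rm H^{k}_{\Q}}$ ($k=1,\dots,n-1$) and to ${\rm H^{n}_{\C}}$ are precisely the reflective submanifolds of ${\rm H^{n}_{\Q}}$: a copy of ${\rm H^{k}_{\Q}}$ is the fixed-point set of the involution ${\rm diag}({\rm E},-{\rm E})\in{\rm U}(n,1;\Q)$ adapted to a $\Q$-hyperbolic subspace of $\Q^{n,1}$, and a copy of ${\rm H^{n}_{\C}}$ is the fixed-point set of the involution $L_a$ ($a$ an imaginary unit, $a^{2}=-1$) of Proposition \ref{fixp}. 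Hence such an $M$ is symmetric, and at each $p\in M$ the orthogonal complement $M_p^{\bot}$ is a well-defined complete totally geodesic submanifold with $T_pM_p^{\bot}=\nu_pM$. I would then check that $\Pi_M^{-1}(p)=M_p^{\bot}$: if $q\in M_p^{\bot}$, the geodesic from $q$ to $p$ inside $M_p^{\bot}$ meets $M$ orthogonally at $p$ (its velocity there lies in $T_pM_p^{\bot}=\nu_pM$), so $\Pi_M(q)=p$; conversely if $\Pi_M(q)=p$, the minimizing geodesic from $q$ to $p$ meets $M$ orthogonally at $p$, so its initial velocity lies in $\nu_pM=T_pM_p^{\bot}$, and by completeness and total geodesy of $M_p^{\bot}$ the whole geodesic, hence $q$, lies in $M_p^{\bot}$. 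Thus the fibers of $\Pi_M$ are exactly the submanifolds $M_p^{\bot}$, and they are totally geodesic.

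For the ``only if'' direction, assume every fiber of $\Pi_M$ is totally geodesic; fix $p\in M$ and set $F=\Pi_M^{-1}(p)$, a complete totally geodesic submanifold with $T_pF=\nu_pM$ and $\dim_{\R}F=4n-\dim_{\R}M$. By Proposition \ref{ptgsm} it remains to rule out $M\cong{\rm H^{k}_{\C}}$ with $k<n$, $M\cong{\rm H^{k}_{\R}}$, and $M$ a $3$-dimensional totally geodesic submanifold of a quaternionic geodesic. The argument rests on two points. (i) For $n\geq 2$, every totally geodesic submanifold of ${\rm H^{n}_{\Q}}$ of real dimension strictly between $2n$ and $4n$ is isometric to some ${\rm H^{m}_{\Q}}$ (of real dimension $4m$), since by Proposition \ref{ptgsm} the submanifolds of complex, real and $3$-dimensional type have real dimension $\leq 2n$, $\leq n$ and $=3$. (ii) The tangent space at a point of a submanifold isometric to ${\rm H^{m}_{\Q}}$ is a quaternionic subspace of $T_p{\rm H^{n}_{\Q}}$, whereas, normalizing $p$ to the origin of the ball model, a direct computation shows that $\nu_pM$ is not a quaternionic subspace when $M\cong{\rm H^{k}_{\C}}$, $k\geq 1$ (then $T_pM=\C(i)^{k}\oplus 0\subset\Q^{k}\oplus\Q^{n-k}$, so $\nu_pM=(\C(i)j)^{k}\oplus\Q^{n-k}$ and $j\cdot(e_1j)=-e_1\notin\nu_pM$), nor when $M\cong{\rm H^{k}_{\R}}$, $k\geq 1$ (then $T_pM$ is totally real, so for $e_1\in T_pM$ one has $ie_1\in\nu_pM$ but $i\cdot(ie_1)=-e_1\notin\nu_pM$). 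Granting these: if $M\cong{\rm H^{k}_{\C}}$ with $k<n$, then $\dim_{\R}F=4n-2k>2n$, so by (i) $F\cong{\rm H^{m}_{\Q}}$ and by (ii) $\nu_pM=T_pF$ would be quaternionic, a contradiction; if $M\cong{\rm H^{k}_{\R}}$, then $\dim_{\R}F=4n-k\geq 3n>2n$ and the same contradiction arises; and if $M$ is $3$-dimensional, then $\dim_{\R}F=4n-3$, which for $n\geq 2$ lies strictly between $2n$ and $4n$ but is not a multiple of $4$, so by (i) it cannot be the dimension of any totally geodesic submanifold --- contradicting that $F$ is one. Hence $M\cong{\rm H^{k}_{\Q}}$ for some $k\leq n-1$, or $M\cong{\rm H^{n}_{\C}}$.

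The main obstacle is point (ii), and more generally the realisation that in the ``only if'' direction one must control not merely dimensions but which subspaces of $T_p{\rm H^{n}_{\Q}}$ actually arise as tangent spaces of totally geodesic submanifolds (Proposition \ref{ptgsm}) --- in particular, that the orthogonal complement of a complex-but-not-maximal, or of a totally real, tangent space is never quaternionic. A more conceptual alternative would be to prove directly that all fibers of $\Pi_M$ are totally geodesic if and only if $\nu_pM$ is a Lie triple system for every $p$, which in a simply connected symmetric space is equivalent to $M$ being reflective, and then quote the classification of \cite{Leu}; but the implication ``$\nu_pM$ a Lie triple system $\Rightarrow$ the geodesic reflection in $M$ is a genuine isometry of ${\rm H^{n}_{\Q}}$'' is precisely the curvature computation the case analysis above is meant to sidestep, so I expect that to be where the real work lies.
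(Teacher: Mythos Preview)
Your argument is correct. In the ``if'' direction it coincides with the paper's (implicit) reasoning: the listed $M$ are exactly the reflective submanifolds, so the orthogonal complements $M_p^{\bot}$ exist and are totally geodesic, and your identification $\Pi_M^{-1}(p)=M_p^{\bot}$ is the right way to conclude.

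For the ``only if'' direction you take a genuinely different route from the paper. The paper offers no detailed proof: after recalling that in a simply connected symmetric space reflective $\Leftrightarrow$ symmetric and that symmetric submanifolds admit totally geodesic normal complements, it simply states ``All this implies the following results'' and quotes Leung's classification. The intended argument is therefore: totally geodesic fibers $\Rightarrow$ $\nu_pM$ is a Lie triple system at every $p$ $\Rightarrow$ $M$ is reflective $\Rightarrow$ $M$ is on Leung's list. You instead bypass this abstract equivalence entirely and work directly from Proposition~\ref{ptgsm}: since any totally geodesic submanifold of real dimension strictly between $2n$ and $4n$ must be of quaternionic type (your point (i)), the fiber over $p$ in each of the forbidden cases would force $\nu_pM$ to be a $\Q$-subspace of $T_p{\rm H^{n}_{\Q}}$, and your explicit tangent-space checks (point (ii)) show this fails for $M\cong{\rm H^{k}_{\C}}$ with $k<n$ and for $M\cong{\rm H^{k}_{\R}}$, while the $3$-dimensional case is ruled out on pure dimension grounds. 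What your approach buys is self-containment: you never need the step ``$\nu_pM$ a Lie triple system $\Rightarrow$ $M$ reflective'', which the paper takes for granted. What the paper's route buys is uniformity and the same statement for ${\rm H^{n}_{\C}}$ for free. One cosmetic remark: in (ii) you write $j\cdot(e_1 j)$ and $i\cdot(ie_1)$, which reads as left multiplication, whereas the relevant $\Q$-structure on $T_p{\rm H^{n}_{\Q}}$ (coming from right $\Q$-subspaces of $\Q^{n,1}$) is the right one; the conclusion is unaffected since $(e_1 j)j=-e_1$ and $(e_1 i)i=-e_1$ just the same, but it would be cleaner to phrase the check that way.
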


\subsubsection{Mostow decomposition of bisectors in quaternionic hyperbolic space}

In this section, we describe the Mostow decomposition of bisectors in  quaternionic hyperbolic space ${\rm H^{n}_{\Q}}$.

\medskip

Let $p_1, p_2$ be two distinct points in ${\rm H^{n}_{\Q}}$, and let $B(p_1,p_2)$ be the bisector of $\{p_1,p_2\}$, that is,

$$B(p_1,p_2)=\{p \in {\rm H^{n}_{\Q}}: d(p_1, p)=d(p_2,p)\}.$$

Then it follows from projective geometry that there exists the unique quaternionic geodesic $\Sigma \subset {\rm H^{n}_{\Q}}$ spanned by $p_1$ and $p_2$.
We call $\Sigma$ the {\sl quaternionic spine}  with respect to the pair $\{p_1,p_2\}$. The {\sl real spine} of $B$ (with respect to $\{p_1,p_2\}$)
equals

$$\sigma \{p_1,p_2\}= B(p_1,p_2) \cap \Sigma = \{ p \in \Sigma : d(p_1,p)=d(p_2,p)\}$$
that is, the orthogonal bisector of the geodesic segment joining $p_1$ and $p_2$ in $\Sigma$.
Since $\Sigma$ is isometric to ${\rm H^{4}_{\R}}$, it follows that $\sigma$ is isometric to ${\rm H^{3}_{\R}}$.

\medskip

Let $\Pi_\Sigma :  {\rm H^{n}_{\Q}} \rightarrow \Sigma$ be orthogonal projection onto $\Sigma$. It follows from Theorem \ref{refl1} that the fibers of $\Pi$ are totally
geodesic submanifolds of ${\rm H^{n}_{\Q}}$ isometric to  ${\rm H^{n-1}_{\Q}}$.

\medskip

First,  we prove the following proposition.

\begin{prop} 
\label{dmos1} For any $p \in {\rm H^{n}_{\Q}} \setminus \Sigma$, and $q \in \Sigma$, the geodesics from $\Pi_\Sigma (p)$ to $p$ and $q$  are orthogonal  and span a
totally real totally geodesic $2$-plane.
\end{prop}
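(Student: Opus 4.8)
The plan is to verify the two assertions separately: orthogonality follows at once from the definition of the orthogonal projection, while for the second I would normalize coordinates and then write down an explicit $\R$-hyperbolic subspace of $\Q^{n,1}$ whose associated real plane contains both geodesics. Throughout, set $o=\Pi_\Sigma(p)$, and assume $q\neq o$ (otherwise the second geodesic is degenerate and there is nothing to prove). By the defining property of the nearest-point projection onto a complete totally geodesic submanifold, the geodesic $\gamma$ from $o$ to $p$ meets $\Sigma$ perpendicularly at $o$; since the geodesic $\delta$ from $o$ to $q$ lies in $\Sigma$, this already gives $\gamma\perp\delta$ and shows that the initial direction of $\gamma$ at $o$ lies in $\nu_o\Sigma$.

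Next I would normalize. By Witt's theorem (Section~\ref{tgsm}) choose an orthogonal basis $e_1,\ldots,e_n,e_{n+1}$ of $\Q^{n,1}$ with $o=\pi(e_{n+1})$ and $\Sigma=\pi(\mathrm{span}_{\Q}\{e_1,e_{n+1}\})\cap{\rm H^{n}_{\Q}}$; in the identification $T_o{\rm H^{n}_{\Q}}\cong\mathrm{span}_{\Q}\{e_1,\ldots,e_n\}$ one then has $T_o\Sigma\cong\mathrm{span}_{\Q}\{e_1\}$, and orthogonality there is orthogonality with respect to $\herm{-,-}$. Since $\gamma$ leaves $o$ orthogonally to $\Sigma$, the whole geodesic $\gamma$, and hence $p$, lies in $\pi(\mathrm{span}_{\Q}\{e_2,\ldots,e_n,e_{n+1}\})$; writing $p=\pi(v)$ and scaling so that the last coordinate of $v$ is $1$ gives $v=e_2a_2+\cdots+e_na_n+e_{n+1}$. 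A block-diagonal isometry $\mathrm{diag}(1,B,1)$ with $B\in U(n-1;\Q)$ fixes $e_1$ and $e_{n+1}$ — hence fixes $o$ and $\Sigma$ pointwise and leaves $\gamma$ and $\delta$ unchanged — while acting on $(a_2,\ldots,a_n)\in\Q^{n-1}$ through the transitive action of $U(n-1;\Q)$ on spheres; so we may assume $v=e_2\rho+e_{n+1}$ with $\rho>0$ real (note $\rho\neq 0$ since $p\notin\Sigma$). Finally write $q=\pi(w)$ with $w=e_1b+e_{n+1}$, $b\in\Q$, $0<|b|<1$.

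With $p$ and $q$ in this standard form I would read off the plane. Put $U=\mathrm{span}_{\R}\{e_1b,\,e_2,\,e_{n+1}\}\subset\Q^{n,1}$. A one-line computation using $\herm{e_1b,e_1b}=\bar bb=|b|^2$ and the orthogonality of the $e_i$ shows that the Gram matrix of $\herm{-,-}$ on $U$ in this basis is $\mathrm{diag}(|b|^2,1,-1)$; hence it is real-valued, non-degenerate and of signature $(2,1)$, so $U$ is an $\R$-hyperbolic subspace of $\Q^{n,1}$. By Proposition~\ref{ptgsm}, $P:=\pi(U)\cap{\rm H^{n}_{\Q}}$ is a totally geodesic submanifold isometric to ${\rm H^{2}_{\R}}$, i.e. a real (totally real, totally geodesic) $2$-plane. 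Since $e_{n+1},\ e_2\rho+e_{n+1},\ e_1b+e_{n+1}$ all lie in $U$, we get $o,p,q\in P$; moreover $\pi(\mathrm{span}_{\R}\{e_2,e_{n+1}\})\cap{\rm H^{n}_{\Q}}\subset P$ is a geodesic through $o$ and $p$ and hence equals $\gamma$, while $\pi(\mathrm{span}_{\R}\{e_1b,e_{n+1}\})\cap{\rm H^{n}_{\Q}}\subset P$ is a geodesic through $o$ and $q$ and hence equals $\delta$. Thus $\gamma$ and $\delta$ are distinct geodesics through $o$ lying in the real plane $P$, so $P$ is exactly the $2$-plane spanned by them, as required.

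The step that really needs care is the normalization: one must justify, via Witt's theorem, moving the pair $(\Sigma,o)$ into standard position, and then use the pointwise stabilizer of $\Sigma$ to rotate $p$ onto the coordinate geodesic through $\pi(e_2)$; everything after that is a direct check. Equivalently, one could place $p$ in the slice $\pi(\mathrm{span}_{\Q}\{e_2,\ldots,e_n,e_{n+1}\})$ from the start by identifying the fibre $\Pi_\Sigma^{-1}(o)$ with the orthogonal complement ${\rm H^{n-1}_{\Q}}$ of $\Sigma$ at $o$, using Theorem~\ref{refl1}.
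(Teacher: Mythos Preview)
Your proof is correct, but it follows a different route from the paper's. The paper normalizes only $\Sigma$ (taking it to be the first coordinate axis in the ball model) and then computes the Hermitian triple product
\[
\herm{P,P',Q}=\herm{P,P'}\herm{P',Q}\herm{Q,P}=(|z|^{2}-1)\,|\bar z w-1|^{2}\in\R,
\]
for $P=(z,z',1)$, $P'=(z,0',1)$, $Q=(w,0',1)$; the reality of the triple product is then invoked as the criterion for the three points to lie in a totally real $2$-plane. Your argument instead pushes the normalization further (moving $o$ to $\pi(e_{n+1})$ and rotating $p$ onto the $e_2$-axis with the pointwise stabilizer of $\Sigma$) and then exhibits the real plane explicitly as $\pi\bigl(\mathrm{span}_{\R}\{e_1 b,\,e_2,\,e_{n+1}\}\bigr)$, checking directly that the restricted form is $\R$-valued with signature $(2,1)$. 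The paper's computation is shorter and needs less normalization, but it relies on the triple-product criterion for total reality, which is standard in complex hyperbolic geometry yet is only asserted, not proved, in the quaternionic setting here; your construction is more self-contained in that it builds the $\R$-hyperbolic subspace by hand and appeals only to Proposition~\ref{ptgsm}.
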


\begin{proof} 
To prove this, we will use the ball model for  ${\rm H^{n}_{\Q}}$, see Section \ref{ball}. In what follows, we identify ${\rm H^{n}_{\Q}}$
with the unit ball  $\D$ in $\Q^n$,

$$\D=\{ (z_1,z_2, \ldots, z_n) \in \Q^n :  |z_1|^2 + |z_2|^2 + \ldots + |z_n|^2 < 1   \}.$$

We write $(z_1,z_2, \ldots, z_n)=(z_1, z^\prime)$, where $z^\prime = (z_2, \ldots , z_n) \in  \Q^{n-1}.$ If $z_2= z_3= \ldots =z_n =0$, we denote $z^\prime$ by $0^\prime $.

\medskip

Since the isometry group of ${\rm H^{n}_{\Q}}$ acts transitively on the set of quaternionic geodesics in ${\rm H^{n}_{\Q}}$, we can assume without loss of generality that
$\Sigma= \{(z, 0^\prime) \in \D : z \in \Q \}$. Therefore, in this case, if $p=(z, z^\prime) \in \D$, we have that   $\Pi_\Sigma (p)=(z,0^\prime)$.

\medskip

Let $p=(z,z^\prime) \in \D \setminus \Sigma $, $p^\prime= \Pi_\Sigma (p)= (z, 0^\prime)$, and $q=(w, 0^\prime) \in \Sigma , w \in \Q$.
Let $P=(z,z^\prime , 1)^t$, $P^\prime=(z, 0^\prime, 1)^t$, $Q=(w, 0^\prime,1)^t$ be
vectors in $\Q^{n,1}$ representing the points $p, p^\prime , q$ respectively.

\medskip

Now we compute the Hermitian triple product $\herm{P,P^\prime,Q}$. We have

$$\herm{P,P^\prime, Q} =\herm{P,P^\prime }\herm{P^\prime,Q}\herm{Q,P} = \\ (\bar{z}z - 1)(\bar{z}w -1)(\bar{w}z -1)= \\
(|z|^2 -1)|\bar{z} w -1|^2.$$

So, the Hermitian triple product $\herm{P,P^\prime,Q}$ is real. Therefore, the points $p, p^\prime , q$ lie in a totally real geodesic plane in ${\rm H^{n}_{\Q}}$.
\end{proof}

\medskip

Also we need so-called the Pythagorean theorem in hyperbolic plane ${\rm H^{2}_{\R}}$, see \cite{Ber}.

\begin{theorem} Let $a, b, c$ be vertices of a right triangle with right angle at $a$. Then

$$\cosh d(b,c)=\cosh d(a,b) \cosh d(a,c).$$

\end{theorem}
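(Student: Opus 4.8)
The plan is to prove the identity by a direct computation in the projective model of ${\rm H^{2}_{\R}}$ described in Section~1. Realise ${\rm H^{2}_{\R}}$ as the set of negative lines in $\R^{2,1}$, where $\langle v,w\rangle = v_1 w_1 + v_2 w_2 - v_3 w_3$, and normalise a negative vector $v$ so that $\langle v,v\rangle = -1$. With this normalisation the Bergman distance of the excerpt becomes $\cosh d(p,q) = |\langle v,w\rangle|$ for unit negative representatives $v,w$ of $p,q$; moreover $\langle v,w\rangle < 0$ for representatives lying on the same sheet of $\{\langle v,v\rangle = -1\}$ (this is the reverse Cauchy--Schwarz inequality for a form of Lorentz signature), so in fact $\cosh d(p,q) = -\langle v,w\rangle$. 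Since ${\rm PU}(2,1;\R)$ acts transitively on ${\rm H^{2}_{\R}}$, I may assume that $a$ is the point represented by $v_a = (0,0,1)^t$.

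Next I would recall the standard parametrisation of geodesics through $a$. The tangent space at $v_a$ to the level set $\{\langle v,v\rangle = -1\}$ is $T = \{(x_1,x_2,0)\}$, on which $\langle\cdot,\cdot\rangle$ restricts to the usual positive-definite inner product; hence the Riemannian angle at $a$ between two geodesic segments issuing from $a$ equals the Euclidean angle between their unit initial tangent vectors in $T$. If $u \in T$ is a unit vector, then $\gamma_u(t) = \cosh(t)\,v_a + \sinh(t)\,u$ lies on $\{\langle v,v\rangle = -1\}$, has unit speed, and satisfies $\cosh d(a,\gamma_u(t)) = -\langle v_a,\gamma_u(t)\rangle = \cosh t$, so it is the unit-speed geodesic from $a$ with initial direction $u$. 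Writing $\beta = d(a,b)$ and $\gamma = d(a,c)$, I therefore represent $b$ by $\cosh(\beta)\,v_a + \sinh(\beta)\,u$ and $c$ by $\cosh(\gamma)\,v_a + \sinh(\gamma)\,w$, where $u,w \in T$ are unit vectors, and the hypothesis of a right angle at $a$ becomes exactly $\langle u,w\rangle = 0$.

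It then remains to expand, using bilinearity together with $\langle v_a,v_a\rangle = -1$ and $\langle u,v_a\rangle = \langle w,v_a\rangle = 0$:
$$\langle b,c\rangle = -\cosh\beta\cosh\gamma + \sinh\beta\sinh\gamma\,\langle u,w\rangle.$$
The right-angle hypothesis kills the second term, so $\langle b,c\rangle = -\cosh\beta\cosh\gamma$, which is negative; therefore $\cosh d(b,c) = |\langle b,c\rangle| = \cosh\beta\cosh\gamma = \cosh d(a,b)\,\cosh d(a,c)$, as claimed. (If $\beta=0$ or $\gamma=0$ the triangle is degenerate and the identity is trivial.)

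There is no serious obstacle here; the argument is bookkeeping. The only points needing a little care are matching the normalisation so that $\cosh d(p,q) = -\langle v,w\rangle$ on unit negative representatives, the two-line verification that $\gamma_u$ is the geodesic (compute $\langle\gamma_u,\gamma_u\rangle$ and $\langle\gamma_u',\gamma_u'\rangle$), and the observation that the induced Riemannian angle at $a$ is read off from the ambient form restricted to $T$. Alternatively one could quote the general hyperbolic law of cosines for ${\rm H^{2}_{\R}}$ and specialise the angle to $\pi/2$, but the self-contained computation above is shorter and uses only the conventions already fixed in Section~1.
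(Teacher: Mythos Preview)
Your argument is correct: placing $a$ at the origin of the hyperboloid model, writing $b$ and $c$ along unit-speed geodesics $\gamma_u,\gamma_w$, and expanding $\langle b,c\rangle$ is exactly the clean way to get the hyperbolic Pythagorean identity, and the orthogonality hypothesis $\langle u,w\rangle=0$ kills the cross term as you say.

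However, the paper does not actually prove this statement. It is quoted as a known fact (``the Pythagorean theorem in hyperbolic plane ${\rm H^{2}_{\R}}$, see \cite{Ber}'') and attributed to Beardon's book, with no argument given. So there is nothing to compare your proof against beyond that citation; your write-up simply supplies what the paper outsources. If you want to match the paper's level of detail, a one-line reference would suffice, but your self-contained derivation in the projective model of Section~1 is a perfectly good replacement and has the advantage of using only the conventions already set up in the paper.
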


\medskip

Now we are ready to prove the Mostow decomposition theorem in quaternionic hyperbolic geometry.

\begin{theorem}\label{qmos}

Let $B, \Sigma$ and $\sigma$ be as above. Let $\Pi_\Sigma :  {\rm H^{n}_{\Q}} \rightarrow \Sigma$ be orthogonal projection onto $\Sigma$. Then

$$B = \Pi_\Sigma ^{-1} (\sigma)=\bigcup_{s \in \sigma} \Pi_\Sigma ^{-1}(s)$$.

\end{theorem}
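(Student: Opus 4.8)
The plan is to reduce the statement to the Pythagorean theorem in a totally real geodesic $2$-plane, exactly as in the complex case, using the two preliminary results just established. First I would fix $p \in {\rm H^{n}_{\Q}}$ and set $s_0 = \Pi_\Sigma(p) \in \Sigma$. The key geometric input is Proposition \ref{dmos1}: for any $q \in \Sigma$, the geodesic segments $[s_0,p]$ and $[s_0,q]$ are orthogonal and lie in a common totally real totally geodesic $2$-plane $\cong {\rm H^{2}_{\R}}$. Hence the triangle with vertices $p, s_0, q$ is a right triangle with the right angle at $s_0$, sitting inside a copy of the real hyperbolic plane, so the Pythagorean theorem applies and gives
$$\cosh d(p,q) = \cosh d(p,s_0)\,\cosh d(s_0,q).$$

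Next I would use this identity to characterize membership in $B = B(p_1,p_2)$. Apply the displayed identity twice, once with $q = p_1$ (so $s_0 = \Pi_\Sigma(p)$ but the relevant projection is still $\Pi_\Sigma(p)$ since $p_1 \in \Sigma$) and once with $q = p_2$. Since $\cosh d(p,\Pi_\Sigma(p)) > 0$ is a common positive factor, we obtain that $d(p_1,p) = d(p_2,p)$ if and only if $d(p_1,\Pi_\Sigma(p)) = d(p_2,\Pi_\Sigma(p))$, i.e.\ if and only if $\Pi_\Sigma(p) \in \sigma$. This is precisely the assertion $p \in B \iff \Pi_\Sigma(p) \in \sigma$, which unwinds to $B = \Pi_\Sigma^{-1}(\sigma) = \bigcup_{s \in \sigma}\Pi_\Sigma^{-1}(s)$. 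The second equality is just the tautological decomposition of a preimage into fibers.

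One point that needs a word of care is that Proposition \ref{dmos1} places $p$, $\Pi_\Sigma(p)$, and $q$ in \emph{some} totally real geodesic $2$-plane, and I should confirm that the orthogonality statement there really identifies the angle at $\Pi_\Sigma(p)$ with the right angle needed for the Pythagorean theorem: the geodesic from $\Pi_\Sigma(p)$ to $p$ is the fiber direction of the orthogonal projection $\Pi_\Sigma$, which by definition meets $\Sigma$ orthogonally, and the geodesic from $\Pi_\Sigma(p)$ to $q$ lies inside $\Sigma$; so the angle between them at $\Pi_\Sigma(p)$ is $\pi/2$, and both segments lie in the real plane furnished by Proposition \ref{dmos1}. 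There is also the degenerate case $p \in \Sigma$, where $\Pi_\Sigma(p) = p$ and the statement $B \cap \Sigma = \sigma$ holds by the definition of $\sigma$; so we may assume $p \notin \Sigma$ throughout.

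I expect the main (and essentially only) obstacle to be bookkeeping about which distances are intrinsic to the totally geodesic submanifolds involved — i.e.\ making sure that $d$ restricted to the totally real $2$-plane, to $\Sigma$, and to the fiber all agree with the ambient Bergman distance, so that the Pythagorean identity proved in ${\rm H^{2}_{\R}}$ transfers verbatim. This is immediate from totally geodesic submanifolds being isometrically embedded, but it is the place where the argument could be sloppy. Everything else is formal: once the Pythagorean identity holds with the common factor $\cosh d(p,\Pi_\Sigma(p))$, the equivalence $d(p_1,p) = d(p_2,p) \iff \Pi_\Sigma(p)\in\sigma$ and hence the decomposition follow directly.
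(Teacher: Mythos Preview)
Your proposal is correct and follows essentially the same route as the paper: use Proposition~\ref{dmos1} to place $p$, $\Pi_\Sigma(p)$, and each $p_i$ in a totally real geodesic $2$-plane with a right angle at $\Pi_\Sigma(p)$, then invoke the hyperbolic Pythagorean theorem. In fact your write-up is slightly more complete than the paper's, which only spells out the inclusion $\Pi_\Sigma^{-1}(\sigma)\subseteq B$; your biconditional via the common factor $\cosh d(p,\Pi_\Sigma(p))$ and your handling of the degenerate case $p\in\Sigma$ cover the reverse inclusion that the paper leaves implicit.
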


\begin{proof} 
Let $p \in \Pi_\Sigma ^{-1} (\sigma)$.
First, we have that $d(\Pi_\Sigma(p),p_1)=d(\Pi_\Sigma,p_2)$. By applying Proposition \ref{dmos1}, we have that
the points $p,\  \Pi_\Sigma(p),\  p_1$ lie in a totally real geodesic plane in ${\rm H^{n}_{\Q}}$. The same is true for the points $p,\  \Pi_\Sigma(p),\  p_2.$
Then it follows from the Pythagorean theorem applied to the right triangles with vertices $p,\  \Pi_\Sigma(p),\  p_1$  and $p,\  \Pi_\Sigma(p),\  p_2$
that $d(p_1,p)=d(p_2,p)$, that is, $p \in B.$

\end{proof}

\medskip

We call the quaternionic hyperplanes  $\Pi_\Sigma ^{-1} (p)$, for $p \in \sigma$, the {\sl slices} of the bisector $B$ (with respect to $\{p_1, p_2\}$).
It is clear that any two distinct slices of $B$ are ultra-parallel, that is, the distance  between them is greater then zero.
\medskip

Since orthogonal projection $\Pi_\Sigma :  {\rm H^{n}_{\Q}} \rightarrow \Sigma$ is a real analytic fibration, we have:

\begin{co} 
A quaternionic  bisector is a real analytic hypersurface in ${\rm H^{n}_{\Q}}$ diffeomorphic to $\R^{4n -1}$.
\end{co}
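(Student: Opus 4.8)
The plan is to read everything off the decomposition $B=\Pi_\Sigma^{-1}(\sigma)$ just established in Theorem~\ref{qmos}, working in the ball model used in the proof of Proposition~\ref{dmos1}. There we may take $\Sigma=\{(z,0^\prime)\in\D : z\in\Q\}$, so that $\Sigma$ is the open unit $4$-ball $\{z\in\Q:|z|<1\}$ (isometric to ${\rm H^{4}_{\R}}$) and $\Pi_\Sigma(z,z')=(z,0^\prime)$. The real spine $\sigma\subset\Sigma$ is, by definition, the perpendicular bisector of a geodesic segment of $\Sigma$, hence a totally geodesic hypersurface of $\Sigma$ isometric to ${\rm H^{3}_{\R}}$; in particular it is a real analytic submanifold of the $4$-ball of codimension one. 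First I would note that $B=\Pi_\Sigma^{-1}(\sigma)=\{(z,z')\in\D : (z,0^\prime)\in\sigma\}$ is then a real analytic hypersurface of ${\rm H^{n}_{\Q}}$, being the inverse image of a real analytic hypersurface under the real analytic submersion $\Pi_\Sigma$; a dimension count gives $\dim_\R B=\dim_\R\sigma+\dim_\R{\rm H^{n-1}_{\Q}}=3+4(n-1)=4n-1$.

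Next I would exhibit the diffeomorphism $B\cong\R^{4n-1}$ by hand. For each $z$ with $(z,0^\prime)\in\sigma$ we have $|z|<1$, so $1-|z|^2>0$, and the map
$$\Phi\colon (z,z')\longmapsto \Bigl(z,\ \tfrac{1}{\sqrt{1-|z|^2}}\,z'\Bigr)$$
is a real analytic diffeomorphism of $B$ onto the product $\sigma\times\bigl\{z'\in\Q^{n-1}:|z'|<1\bigr\}$, with real analytic inverse $(z,w)\mapsto\bigl(z,\sqrt{1-|z|^2}\,w\bigr)$. Since $\sigma$ is isometric to ${\rm H^{3}_{\R}}$ and hence diffeomorphic to $\R^3$, and the open unit ball of $\Q^{n-1}\cong\R^{4(n-1)}$ is diffeomorphic to $\R^{4(n-1)}$, this gives $B\cong\R^3\times\R^{4(n-1)}=\R^{4n-1}$.

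Equivalently, and more conceptually, $\Pi_\Sigma|_B\colon B\to\sigma$ is the restriction of the real analytic fibration $\Pi_\Sigma$ (whose fibers, by Theorem~\ref{refl1}, are totally geodesic quaternionic hyperplanes isometric to ${\rm H^{n-1}_{\Q}}$) to the embedded submanifold $\sigma$, hence is itself a locally trivial fibration, and by Theorem~\ref{qmos} its fibers are precisely the slices of $B$; since the base $\sigma\cong{\rm H^{3}_{\R}}$ is contractible and paracompact, such a fibration is trivial, yielding $B\cong\sigma\times{\rm H^{n-1}_{\Q}}\cong\R^{4n-1}$ once more. The only point requiring any care is the passage from ``real analytic submersion'' to ``locally trivial fibration'', but in the explicit model of the first argument this is visible directly from the formula for $\Phi$, so no genuine obstacle arises — the substantive input is Theorem~\ref{qmos}, which is already in hand.
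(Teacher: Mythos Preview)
Your proposal is correct and follows the paper's approach: the paper simply records, in the sentence preceding the corollary, that ``since orthogonal projection $\Pi_\Sigma : {\rm H^{n}_{\Q}} \rightarrow \Sigma$ is a real analytic fibration'', the corollary follows from $B=\Pi_\Sigma^{-1}(\sigma)$. Your second, ``more conceptual'' paragraph is exactly this argument; you add on top of it an explicit real analytic trivialization $\Phi$ in the ball model, which the paper omits but which makes the fibration claim concrete.
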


\medskip

Now we show that the quaternionic spine $\Sigma$ and the real spine $\sigma$ of $B$ depend intrinsically on $B$, and not on the pair $\{p_1, p_2\}$ used to define $B$.
As was remarked above, Goldman's proof of the complex hyperbolic analogue of this fact is based on the following: the complex hyperbolic space is a complex analytic manifold;
a bisector is Levi-flat and its maximal holomorphic submanifolds are its slices. Since quaternionic hyperbolic space has no natural complex structure, this proof does not
work in the quaternioic case. We prove this result using some elementary facts in quaternionic projective geometry.

\medskip

\begin{theorem} 
Let us suppose that $\{p_1, p_2\}$ and $\{p^\prime _1, p^ \prime _2\}$ be two pairs of distinct points in  ${\rm H^{n}_{\Q}}$
such that the bisectors $B=B(p_1, p_2)$ and $B^\prime = B^\prime (p^\prime _1, p^\prime _2)$ are equal. Then the slices (respectively, quaternioic spine, real spine) of $B$
with respect to  $\{p_1, p_2\}$ equal the slices (respectively, quaternioic spine, real spine) of $B^\prime$ with respect to $\{p^\prime_1, p^\prime _2\}$.
\end{theorem}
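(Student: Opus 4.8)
The plan is to show that the slices of a quaternionic bisector are precisely its maximal totally geodesic submanifolds isometric to $\rm H^{n-1}_{\Q}$, so that they are determined by $B$ alone; the quaternionic spine and real spine are then recovered intrinsically from the slices. First I would argue that each slice $S_s = \Pi_\Sigma^{-1}(s)$, $s\in\sigma$, is a totally geodesic submanifold of $\rm H^{n}_{\Q}$ isometric to $\rm H^{n-1}_{\Q}$ (this is Theorem \ref{refl1} together with the definition of $\Pi_\Sigma$), and conversely that any totally geodesic submanifold of $\rm H^{n}_{\Q}$ isometric to $\rm H^{n-1}_{\Q}$ which is contained in $B$ must be one of the slices. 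For the converse, suppose $T\subset B$ is a quaternionic hyperplane (a copy of $\rm H^{n-1}_{\Q}$). By Proposition \ref{ptgsm} such a $T$ is the projectivization of a $\Q$-hyperbolic subspace $V^n\subset\Q^{n,1}$ of codimension one, hence $T = \Pi_{\Sigma'}^{-1}(s')$ for the quaternionic geodesic $\Sigma' = \pi(V^{\perp}\oplus V^n\cap\ldots)$ — more simply, $T$ has a well-defined orthogonal complement direction, giving a quaternionic geodesic $\Sigma_T$ meeting $T$ orthogonally at one point $s'$. The key claim is that $\Sigma_T = \Sigma$ and $s'\in\sigma$.

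To establish $\Sigma_T = \Sigma$, I would use the Mostow decomposition (Theorem \ref{qmos}) in the following way. Fix the defining pair $\{p_1,p_2\}$. The quaternionic spine $\Sigma$ can be characterized as the unique quaternionic geodesic that meets every slice orthogonally; equivalently, $\Sigma$ is the quaternionic geodesic through $p_1$ and $p_2$, and $p_1,p_2$ lie on $\Sigma$. Since $T\subset B = \bigcup_{s\in\sigma}S_s$ and the $S_s$ are disjoint, and $T$ is connected of real dimension $4(n-1) = \dim_{\R} S_s$, a dimension/connectedness argument forces $T$ to be contained in a single slice $S_s$, hence $T = S_s$ (equal dimension, both totally geodesic and connected). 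Here the disjointness of distinct slices is crucial: two distinct slices are ultraparallel, so they do not intersect, and a connected totally geodesic submanifold cannot straddle two of them. Once $T = S_s$ for some $s\in\sigma$, we get immediately that the collection of slices of $B$ with respect to $\{p_1,p_2\}$ coincides with the set of all quaternionic hyperplanes contained in $B$, which is manifestly independent of the pair; hence the slices of $B$ and $B'$ agree.

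Finally, from the slices one recovers the real spine $\sigma$ as the set of points $\{s\}$ where, for each slice $S_s$, the foot of the common perpendicular lies; concretely $\sigma = \bigcup_s (\Sigma\cap S_s)$, but intrinsically: the real spine is the locus of points $p\in B$ such that the slice through $p$ meets $B$ in a set on which the two "sides" of $B$ degenerate — better, use that $\sigma$ is characterized by Proposition \ref{dmos1} as the set of feet $\Pi_\Sigma(p)$, and $\Sigma$ is the unique quaternionic geodesic orthogonal to all slices. Since the slices are intrinsic and $\Sigma$ (resp.\ $\sigma$) is built canonically from them, the quaternionic spine and real spine of $B$ and $B'$ coincide. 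The main obstacle I anticipate is the converse direction — proving that every quaternionic hyperplane inside $B$ is a slice — and specifically ruling out "exotic" copies of $\rm H^{n-1}_{\Q}$ sitting inside $B$ transversally to the slice foliation; the disjointness (ultraparallelism) of distinct slices, combined with connectedness and the equality of real dimensions, is what makes this work, but one must be careful that a totally geodesic $\rm H^{n-1}_{\Q}$ meeting two slices would have to meet their common perpendicular $\Sigma$ in more than the expected way, contradicting that $\Sigma\cap S_s$ is a single point.
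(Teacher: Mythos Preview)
Your overall strategy---show that the slices are intrinsic to $B$, then recover $\Sigma$ and $\sigma$ from them---matches the paper's. The gap is in your key step, the ``dimension/connectedness argument'' that a quaternionic hyperplane $T\subset B$ must lie in a single slice. The slices $S_s$ are not a discrete collection: they are indexed by $\sigma\cong {\rm H^3_\R}$, a connected $3$-manifold. Connectedness of $T$ therefore does not force $\Pi_\Sigma(T)$ to be a single point; for instance $\sigma$ itself is a connected totally geodesic submanifold of $B$ meeting every slice. Equal real dimension of $T$ and $S_s$ also does not help on its own, since $\Pi_\Sigma|_T$ could in principle have $3$-dimensional image and $(4n-7)$-dimensional fibers. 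You correctly flag this as the obstacle at the end, but the argument you propose does not resolve it.

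The paper closes the gap with a transversality dimension count rather than connectedness. Take a slice $s'$ of $B'$ and a slice $s$ of $B$ with $s\cap s'\neq\emptyset$ and $s\neq s'$. Two distinct quaternionic hyperplanes in $\P\Q^n$ are always transversal along their intersection, and both $s,s'$ lie in the smooth $(4n-1)$-manifold $B$; hence $\dim_\R(s\cap s') = 4(n-1)+4(n-1)-(4n-1)=4n-7$. But $s\cap s'$ is the intersection of a quaternionic projective submanifold with ${\rm H^n_\Q}$, so its real dimension is a multiple of $4$---contradiction. Thus every slice of $B'$ is a slice of $B$ (and vice versa). The recovery of $\Sigma$ and $\sigma$ is then exactly as you say: $\Sigma$ is the unique common orthogonal quaternionic geodesic to any two distinct (ultraparallel) slices, and $\sigma=B\cap\Sigma$.
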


\begin{proof} First we show that the slices of $B$ coincide with the slices of $B^\prime$.

\medskip

We need the following facts from projective geometry, see \cite{Art}: any two different projective submanifolds in quaternionic projective space $\P\Q^n$, $n>1$, of quaternionic dimension $n-1$ are transversal along their intersection; every intersection of projective
submanifolds in $\P\Q^n$ is a projective submanifold in $\P\Q^n$. Also, we recall that any totally geodesic submanifold of ${\rm H^{n}_{\Q}}$
isometric to ${\rm H^{k}_{\Q}}$, $k=1, \ldots, n-1$, is the intersection of a projective submanifold in $\P\Q^n$ of dimension $k$ with ${\rm H^{n}_{\Q}}$.

\medskip

Let $s^\prime$ be a slice of $B^\prime$. Since $B=B^\prime$, then $s^\prime$ is either a slice of $B$, or $s^\prime$ intersects a slice $s$ of $B$ transversally.
Suppose that $s \cap s^\prime \neq \emptyset$ and $s \neq s^\prime$. Since $s \subset B$ and $s^\prime \subset B$, and $B$ is a smooth submanifold of ${\rm H^{n}_{\Q}}$
of real dimension $4n-1$, it follows from transversality of $s$ and $s^\prime$ along their intersection that $ \dim_\R s + \dim_\R s^\prime - (4n-1) = \dim_\R (s\cap s^\prime)$.
This implies that $\dim_\R (s\cap s^\prime)=4n-7$. This is impossible, since $s \cap s^\prime$ is the intersection of a projective submanifold in $\P\Q^n$ with ${\rm H^{n}_{\Q}}$.
Therefore, we get that $s=s^\prime$. So, the slices of $B^\prime$ are slices of $B$. Since each slice is orthogonal to $\Sigma$, any pair of distinct slices of bisector are
ultraparallel and their unique common orthogonal quaternionic geodesic is equal to $\Sigma$.

\medskip

Therefore, $B$ completely determines the quaternionic spine $\Sigma$. Since $\sigma= B \cap \Sigma$, the bisector $B$ uniquely determines its real spine.
\end{proof}

\medskip

The following theorem is a generalization of the Mostow decomposition theorem.

\begin{theorem} Let $\Sigma$ be a totally geodesic submanifold of ${\rm H^{n}_{\Q}}$ isometric to ${\rm H^k_{\Q}}$, $k=1$, $2,\ldots ,n-1$. Let $\Pi_\Sigma   :{\rm H^n_{\Q}}\to\Sigma$ be the orthogonal projection onto $\Sigma$. If $B'\subset\Sigma$ is a bisector in $\Sigma,$ $B'= B'(p_1,p_2),$
$p_1,p_2 \in  \Sigma$, then $B=\Pi_\Sigma ^{-1}(B')$ is a bisector in ${\rm H^n_{\Q}}$ such that $B'=B\cap\Sigma$, and, moreover, $B=B(p_1,p_2)$. Conversely, if $B=B(p_1,p_2)$ is a bisector in ${\rm H^n_{\Q}}$ defined by $p_1,p_2 \in  \Sigma$, then its orthogonal projection $B'$ onto $\Sigma$ is a bisector in $\Sigma$, and  $B'= B'(p_1,p_2).$
\end{theorem}
\begin{proof} The proof follows along the lines of the proof of Theorem 2.11 and the lemma below.
\end{proof}

\medskip

\begin{lemma}
Let $\Sigma$ be a totally geodesic submanifold of ${\rm H^n_{\Q}}$ isometric to ${\rm H^k_{\Q}}$, $k=1, \ldots ,n-1$. Let $\Pi_\Sigma: {\rm H^n_{\Q}}\to\Sigma$ be the orthogonal projection onto $\Sigma$. If $p\in {\rm H^n_{\Q}} \setminus \Sigma$ and $q\in\Sigma$, then the points $\Pi_\Sigma (p)$, $p$ and $q$ span a totally real geodesic 2-plane in ${\rm H^n_{\Q}}$. 
\end{lemma}
\begin{proof}
Let us consider the ball model $\D$ for ${\rm H^n_{\Q}}$. For $p\in\D$, we write $p=(z,z')$, where $z\in\Q^k$ and $z'\in\Q^{n-k}$. Applying an automorphism, we can assume that $\Sigma=\{(z,0')\in\D:\,z\in\Q^k\}$. So, $\Pi_\Sigma (z,z')=(z,0')$. 

\medskip

Now, let $p=(z,z')\in\D \setminus \Sigma$, $p'=(z,0')=\Pi_\Sigma (p)$ and $q=(w,0')\in\Sigma$. Let $P=(z,z',1)^t$, $P'=(z,0',1)^t$, and $Q=(w,0',1)^t$ be vectors in $\Q^{n,1}$ representing the points $p$, $p'$, $q$, respectively. We have that
\begin{align*}
\herm{P,P',Q}&=\herm{P,P'}\herm{P',Q}\herm{Q,P}=\\
             &(\hherm{z,z}-1)(\hherm{z,w}-1)(\hherm{w,z}-1)=\\
             &(\hherm{z,z}-1)~|\hherm{z,w}-1|^2\in\R.
\end{align*}

Therefore, $p, p', q$ lie in a totally real geodesic plane in ${\rm H^n_{\Q}}$. 
\end{proof}

\subsubsection{Automorphism of bisectors}

If $N$ is a totally geodesic submanifold of ${\rm H^{n}_{\Q}}$ isometric to ${\rm H^{3}_{\R}}$, then there exists the unique quaternionic geodesic
$M$ which contains $N$. Then the pair $\{M, N\}$ defines a bisector $B$ whose quaternionic spine equals to $M$ and real spine equals to $N$.
Indeed, take a point $p$ in $M \setminus N$. Let $\gamma$ be the unique real geodesic in $M$ which contains $p$ and which is orthogonal to $N$. Let $p^\prime \in \gamma$ be
the point symmetric to $p$. It is clear that $M$ is the quaternionic spine of the bisector $B=B(p, p^\prime)$, and $N$ is the real spine  of $B$.

\medskip

Since the group of isometries $PU(n, 1, \Q)$ of ${\rm H^{n}_{\Q}}$ acts transitively on such pairs $\{M, N\}$, we get that $PU(n, 1, \Q)$ acts transitively on bisectors.
Since the quaternionic geodesic $M$ containing $N$ is unique, we have that a bisector in ${\rm H^{n}_{\Q}}$ is defined uniquely by its real spine. Furthermore, the stabilizer of a bisector
in $PU(n, 1, \Q)$ equals the stabilizer of its real spine. This group is described in Proposition \ref{stab1}.

\medskip

\subsubsection{Orthogonality of totally geodesic submanifolds of complex type in  quaternionic hyperbolic space}

Suppose that $M$ and $N$ are totally geodesic submanifolds in  ${\rm H^{n}_{\Q}}$ isometric to ${\rm H^{n}_{\C}}$.

\medskip

Let $I_M$ and $I_N$ denote the geodesic reflections of ${\rm H^{n}_{\Q}}$ with respect to $M$ and $N$ respectively.
Since  $M$ and $N$ are reflective, $I_M$ and $I_N$ are isometries of ${\rm H^{n}_{\Q}}$. The fixed point set of $I_M$ equals to $M$,
and the fixed point set of $I_N$ equals to $N$.

\medskip

The proof of the following theorem is standard.

\medskip

\begin{theorem} The following conditions are equivalent:

\begin{enumerate}
\item $I_M$ and $I_N$ commute,
\item $I_M (N)=N$,
\item $I_N (M)=M$,
\item $M$ and $N$ intersect orthogonally in ${\rm H^{n}_{\Q}}$.
\end{enumerate}

\end{theorem}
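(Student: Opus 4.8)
The plan is to prove the chain of equivalences in a cycle, say $(1)\Rightarrow(2)\Rightarrow(4)\Rightarrow(3)\Rightarrow(1)$, exploiting the symmetry between $M$ and $N$ so that only one of the two ``geometric'' implications needs a genuine argument. The two facts I would lean on throughout are: first, that $M$ and $N$, being reflective (by the classification of reflective submanifolds, since both are isometric to $\rh{n}$-sized copies of ${\rm H^{n}_{\C}}$), have well-defined geodesic reflections $I_M,I_N\in PU(n,1;\Q)$ whose fixed-point sets are exactly $M$ and $N$; second, the standard fact that for an isometry $g$ and a set $S$ fixed pointwise by an involution $I$, the fixed-point set of $gIg^{-1}$ is $g(S)$.

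First I would do $(1)\Leftrightarrow(2)$ and $(1)\Leftrightarrow(3)$ together. If $I_M I_N = I_N I_M$, then $I_N I_M I_N^{-1} = I_M$, so $I_M$ and $I_N I_M I_N$ have the same fixed-point set; but the fixed-point set of $I_N I_M I_N^{-1}$ is $I_N(M)$, hence $I_N(M)=M$, which is (3). Conversely, if $I_N(M)=M$, then $I_N I_M I_N^{-1}$ is an involution with fixed-point set $I_N(M)=M$, and since the geodesic reflection in $M$ is the unique such involution (a reflective submanifold determines its reflection uniquely), $I_N I_M I_N^{-1}=I_M$, i.e. (1). The equivalence $(1)\Leftrightarrow(2)$ is the same argument with the roles of $M$ and $N$ swapped. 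So $(1),(2),(3)$ are mutually equivalent with no geometry beyond ``a reflective submanifold determines its reflection.''

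It remains to connect these to (4), orthogonal intersection. The implication $(4)\Rightarrow(1)$: if $M$ and $N$ meet orthogonally at a point $p$, then at $p$ the tangent space splits as $T_pM\oplus T_pN$ (using that $M\cong{\rm H^{n}_{\C}}$ is reflective, its orthogonal complement $M_p^\bot$ at $p$ is again reflective and isometric to ${\rm H^{n}_{\C}}$, and orthogonality forces $N=M_p^\bot$ on the component through $p$); the differentials $(I_M)_*$ and $(I_N)_*$ are then the two commuting involutions $(+\mathrm{id})\oplus(-\mathrm{id})$ and $(-\mathrm{id})\oplus(+\mathrm{id})$ of $T_pM\oplus T_pN$, so $(I_MI_N)_* = (I_NI_M)_*$ at $p$; since an isometry of a connected space is determined by its value and differential at one point, $I_MI_N=I_NI_M$. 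For $(3)\Rightarrow(4)$: assuming $I_N(M)=M$, one first checks $M\cap N\neq\emptyset$ — here one can invoke that $I_N$ restricted to $M$ is an involutive isometry of $M\cong{\rm H^{n}_{\C}}$, which has a fixed point in $\overline{M}$, and a short argument (using that the common perpendicular geodesic segment, if $M,N$ were disjoint, would be $I_N$-invariant and force a fixed point in $M\cap N$) pins the fixed point down to an actual point $p\in M\cap N$; then at such a $p$ the involution $(I_N)_*$ preserves $T_pM$, and since $(I_N)_*$ is $(+\mathrm{id})$ on $T_pN$ and $(-\mathrm{id})$ on $\nu_pN$, the preserved subspace $T_pM$ must be a direct sum of a piece of $T_pN$ and a piece of $\nu_pN$; the case analysis using that $M$ and $M_p^\bot$ are both copies of ${\rm H^{n}_{\C}}$ of the same ``size'' forces $T_pM$ to lie in $T_pN\oplus\nu_pN$ compatibly with orthogonality, giving (4).

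The step I expect to be the main obstacle is the nonemptiness of $M\cap N$ inside $(3)\Rightarrow(4)$: in a noncompact space an $I_N$-invariant totally geodesic $M$ need not a priori meet $N$, and one must rule out the possibility that $I_N$ translates $M$ along a perpendicular. The clean way around this is to observe that if $M\cap N=\emptyset$ there is a unique shortest geodesic segment from $M$ to $N$, this segment is carried to itself by the isometry $I_N$ (which preserves both $M$ and $N$), and $I_N$ acts as the identity on it since it fixes the $N$-endpoint and is an isometry of a segment fixing one end — forcing the $M$-endpoint to be a fixed point of $I_N$, i.e. to lie in $N\cap M$, a contradiction. Once a common point is secured, everything reduces to linear algebra on the tangent space of the type already used for $(4)\Rightarrow(1)$, and the proof closes the cycle.
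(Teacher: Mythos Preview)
The paper offers no proof of this theorem --- it simply calls the result ``standard'' --- so there is nothing to compare against; your proposal supplies the argument the paper omits, and the cycle $(1)\Leftrightarrow(2)\Leftrightarrow(3)$ via conjugation, together with the tangent-space analysis at a common point for the link to $(4)$, is indeed the standard proof.

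There is one genuine oversight in your $(4)\Rightarrow(1)$ step. You assert that orthogonal intersection gives $T_p{\rm H^{n}_{\Q}}=T_pM\oplus T_pN$ and hence $N=M_p^{\bot}$. But this is not what the paper means by ``intersect orthogonally'': the very next result, Theorem~\ref{ortg1}, produces orthogonal pairs $M,N$ with $M\cap N\cong{\rm H^{n}_{\R}}$, so that $T_pM\cap T_pN$ is $n$-dimensional rather than zero, and the remark following its proof explicitly uses that $I_M(N)=N$ and $I_N(M)=M$ for that pair. Condition (4) should therefore be read as: at each $p\in M\cap N$ one has $T_pM=(T_pM\cap T_pN)\oplus(T_pM\cap\nu_pN)$, i.e.\ the complements of the common part in $T_pM$ and in $T_pN$ are mutually orthogonal. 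With this reading your argument repairs immediately: $T_pM$ is then $(I_N)_*$-invariant, so $T_p{\rm H^{n}_{\Q}}$ splits into the four joint $(\pm1,\pm1)$-eigenspaces of $(I_M)_*$ and $(I_N)_*$, both differentials are simultaneously diagonal and hence commute, and since both isometries fix $p$ the commutation holds globally.

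A smaller point: for nonemptiness of $M\cap N$ in $(3)\Rightarrow(4)$, your common-perpendicular argument needs the shortest segment between $M$ and $N$ to exist, which fails if the two are asymptotic. The cleaner route is the one you hint at first: $I_N|_M$ is an order-two isometry of the Hadamard manifold $M\cong{\rm H^{n}_{\C}}$, so Cartan's fixed-point theorem produces a fixed point $p\in M$, and then $p\in N$ since the fixed set of $I_N$ is $N$.
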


\medskip

We recall that ${\rm H^{n}_{\C}}$ has a totally geodesic submanifold isometric to  ${\rm H^{n}_{\R}}$.

\begin{theorem}
\label{ortg1}
Let $M$ be a totally geodesic submanifold of  ${\rm H^{n}_{\Q}}$ isometric to ${\rm H^{n}_{\C}}$. Then for any totally geodesic
submanifold $S \subset M$ isometric to  ${\rm H^{n}_{\R}}$ there exists a  totally geodesic submanifold $N$ isometric to ${\rm H^{n}_{\C}}$ such that
$ S=M \cap N$ and $N$ is orthogonal to $M$ along $S$.
\end{theorem}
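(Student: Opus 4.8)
The plan is to work entirely inside the complex hyperbolic space $M\cong\rh n$... no, $M\cong\ch n$, and exploit the known structure of totally geodesic submanifolds there. The key observation is that inside a complex hyperbolic space $\ch n$, a totally real totally geodesic submanifold $S\cong\rh n$ is reflective (by the first of the two Leuzinger-type theorems quoted above), so the geodesic reflection $I_S^M$ in $S$ \emph{within $M$} is a well-defined holomorphic... rather, anti-holomorphic isometry of $M$. I would first set up coordinates: using the Witt theorem and the transitivity results, normalize so that $M$ is the canonical complex hyperbolic submanifold $\ch n$ defined by $\C^{n+1}\subset\Q^{n+1}$, and $S$ is the real submanifold $\rh n$ defined by $\R^{n+1}\subset\C^{n+1}\subset\Q^{n+1}$. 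This is legitimate because $\mathrm{PU}(n,1;\Q)$ acts transitively on pairs (complex-type submanifold of maximal dimension, real $\rh n$ inside it) — this follows from the global equivalence statements in Section~1 together with Witt's theorem.

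\textbf{Construction of $N$.} With these normalizations, the natural candidate for $N$ is the fixed-point set of the isometry $L_j:\P\Q^n\to\P\Q^n$ induced by $v\mapsto jv$, restricted appropriately — or, more precisely, I would take $N$ to be the projectivization of $\C(j)^{n+1}\cap(\text{something})$. Here is the cleaner route: the real submanifold $S=\rh n$ sits inside \emph{two} different complex-type submanifolds, namely $\ch n=\mathbb P\C(i)^{n+1}\cap\rh n$... let me restate. $S$ is defined by $\R^{n+1}$; the submanifold $M$ is $\mathbb P(\C(i)^{n+1})\cap\rh n$; and I claim $N:=\mathbb P(\C(j)^{n+1})\cap\rh n$ is also isometric to $\ch n$, contains $S$ (since $\R^{n+1}\subset\C(j)^{n+1}$), and satisfies $M\cap N=S$ because $\C(i)^{n+1}\cap\C(j)^{n+1}=\R^{n+1}$ (this last is the statement that $\C(i)\cap\C(j)=\R$ inside $\Q$, which is immediate). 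That $N$ is a totally geodesic submanifold isometric to $\ch n$ follows from the discussion of submanifolds of complex type of maximal dimension in Section~1.

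\textbf{Orthogonality along $S$.} The remaining point is that $M$ and $N$ meet orthogonally along $S$, i.e. for $p\in S$ the tangent spaces $T_pM$ and $T_pN$ span $T_p\rh n$ and their intersection is $T_pS$. By homogeneity (the stabilizer of the pair $(M,S)$ acts transitively on $S$ — use the relevant stabilizer proposition from Section~\ref{stab}) it suffices to check this at one point, say the point represented by $e_{n+1}$. At that point, after the standard identification, $T_pM$ corresponds to the $\C(i)$-span of $e_1,\dots,e_n$, $T_pN$ to the $\C(j)$-span, $T_pS$ to the $\R$-span, and $T_p\rh n$ to the $\Q$-span; writing each $\Q$-coordinate as $\R\oplus\R i\oplus\R j\oplus\R k$ one sees $T_pM=\R e_\ell\oplus\R i e_\ell$, $T_pN=\R e_\ell\oplus \R j e_\ell$ (summed over $\ell$), these are orthogonal complements of each other inside $\R e_\ell\oplus\R i e_\ell\oplus\R j e_\ell\oplus\R k e_\ell$ modulo the common $\R e_\ell$, and one computes $(T_pM)^\perp\cap T_p\rh n$ using the real part of the Hermitian form. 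Alternatively, and more slickly, one invokes the equivalence theorem immediately preceding this statement: it suffices to show $I_N(M)=M$, where $I_N$ is the geodesic reflection of $\rh n$ in $N$; but $I_N$ is realized on $\Q^{n+1}$ by the conjugation-type map $v=(v_\ell)\mapsto (j\bar v_\ell^{\,\C(j)} j^{-1})$ fixing $\C(j)^{n+1}$... — cleaner still: $I_N$ is induced by the automorphism $x\mapsto jxj^{-1}$ of $\Q$ combined with ordinary $\C(j)$-conjugation, an $\R$-linear isometry of $\Q^{n+1}$ preserving the Hermitian form, and one checks directly that it preserves $\C(i)^{n+1}$, hence $I_N(M)=M$, hence orthogonality by the quoted equivalence theorem.

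\textbf{Main obstacle.} The genuinely delicate point is verifying orthogonality \emph{along all of $S$} rather than just set-theoretically $S=M\cap N$; two complex hyperbolic submanifolds sharing a real form need not be orthogonal (e.g. $M$ and $M$ itself share $S$), so the specific choice $N=\mathbb P\C(j)^{n+1}\cap\rh n$ matters and the computation that $(T_pM)^{\perp_{\rh n}}=T_pN$ pointwise is what must be done with care. I expect to handle it by the reflection argument: exhibit the geodesic reflection $I_N$ of $\rh n$ in $N$ explicitly as an element of $\mathrm{Isom}(\rh n)$ (using that $N$ is reflective, from the second quoted Leuzinger theorem), verify $I_N(M)=M$ by a one-line check that the defining automorphism of $\Q$ preserves $\C(i)$ or not — in fact $j(\,\cdot\,)j^{-1}$ sends $i$ to $-i$, so it preserves $\C(i)=\C$ setwise — and then quote the preceding equivalence theorem to upgrade $I_N(M)=M$ to orthogonality of $M$ and $N$. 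Finally $M\cap N\supseteq S$ is clear and $M\cap N\subseteq S$ follows since a point fixed by both $I_M$ and $I_N$ is fixed by their composite, whose fixed set meets $\rh n$ in exactly $S$; this closes the argument.
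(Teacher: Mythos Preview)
Your proposal is correct and follows essentially the same approach as the paper: normalize $(M,S)$ to the canonical pair $(\D_\C,\D_\R)$ via transitivity, take $N=\D_j$ (the paper uses $\D_k$, which is immaterial), observe $\C(i)\cap\C(j)=\R$ gives $M\cap N=S$, and invoke the commuting-reflections criterion for orthogonality. The paper's execution of the last step is a bit cleaner than yours: it identifies $I_M$ and $I_N$ directly as the projectivizations of the left-multiplications $v\mapsto iv$ and $v\mapsto kv$, notes these square to $-I$ (hence are projective involutions with fixed sets $\D_i,\D_k$), and reads off their projective commutativity from the anticommutation $ik=-ki$, rather than your more roundabout verification that $I_N(M)=M$ via $jzj^{-1}=\bar z$ for $z\in\C$.
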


\begin{proof} 
To prove this, we will use the ball model for  ${\rm H^{n}_{\Q}}$, see Section \ref{ball}. In what follows, we identify ${\rm H^{n}_{\Q}}$
with the unit ball  $\D$ in $\Q^n$,

$$\D=\{ (q_1,q_2, \ldots, q_n) \in \Q^n :  |q_1|^2 + |q_2|^2 + \ldots + |q_n|^2 < 1   \}.$$

\medskip

Let us consider the following subsets of $\D$:

$$\D_\C=\{ (z_1,z_2, \ldots, z_n) \in \C^n :  |z_1|^2 + |z_2|^2 + \ldots + |z_n|^2 < 1   \},$$

$$\D_\R=\{ (x_1,x_2, \ldots, x_n) \in \R^n :  |x_1|^2 + |x_2|^2 + \ldots + |x_n|^2 < 1   \},$$
where $\C = \{z=x + iy : x, y \in \R \}$ and $\R$ is the standard subfield of real numbers of $\Q$.

\medskip

Then $\D_\C$ is a totally geodesic submanifold of $\D$ isometric to ${\rm H^{n}_{\C}}$, and $\D_\R$ is a totally geodesic submanifold of  $\D_\C$
isometric to ${\rm H^{n}_{\R}}$.

\medskip

Since the group of isometries of ${\rm H^{n}_{\Q}}$ acts transitively on the set of pairs of the form $(M,S)$ in  ${\rm H^{n}_{\Q}}$,
we can assume without loss of generality that $M=\D_\C$ and $S=\D_\R$.

\medskip

Let $a \in \Q$ be purely imaginary, $|a|=1$. Then $a^2=-1$. Let $\C(a)$ be the subfield of $\Q$ spanned by $1$ and $a$. Denote by $\D_a$ the following set in $\D$ :

$$\D_a=\{ (a_1,a_2, \ldots, a_n) \in \C(a)^n :  |a_1|^2 + |a_2|^2 + \ldots + |a_n|^2 < 1  \}.$$

\medskip

We know, see Section \ref{tgsm}, that $\D_a$ is a totally geodesic submanifold of $\D$ isometric to ${\rm H^{n}_{\C}}$. We remark that $\D_a = \D_\C$ if $a=i$.
It is clear that $\D_\C \cap \D_a = \D_\R $, provided that  $a \neq i$.

\medskip

Let $V_\C$, $V_a$, $V_\R$ denote the sets of vectors in $\Q^{n,1}$ with coordinates in $\C$, $\C(a)$, and $\R$ respectively. Then we have that
$\D_\C$,  $\D_a$,  $\D_\R$ are  projectivizations of $V_\C$, $V_a$, and $V_\R$ respectively.

\medskip

Consider the following semi-linear map $L_a: \Q^{n,1}  \longrightarrow \Q^{n,1}$ given by $L_a (v)=ava^{-1}$, $v \in \Q^{n,1}$.
Then it follows that $L_a (v)=v$ for all $v \in V_a$, see Section \ref{lmaig}. In particular, $L_i (v)=v$ for all $v \in V_i$, that is, $L_i (v)=v$ for all $v \in V_\C.$
We know that projectivization of this semi-linear map $L$ is equal to projectivization of the linear map $L^\prime (v)=av.$ We have that $L^\prime \circ L^\prime  = - I$,
where $I$ is the identity map of $\Q^{n,1}$. This implies that projectivization of $L^\prime_a$ is an isometric involution of $\D$ whose fixed point set equals to $\D_a$.

\medskip

Since $ik=-ki$,  we have that $L^\prime_i \circ L^\prime_k = - L^\prime_k \circ L^\prime_i$. It follows that projectivizations of $L^\prime_i$ and $L^\prime_k$ are commuting involutions whose fixed points sets
are $\D_i =\D_\C$ and $\D_k$.

\medskip
Let again $M=\D_\C$ and $S= \D_\R$.  We define $N$ to be $\D_k$. Then $M \cap N = S.$  Also, let $I_M$ be projectivization of $L^\prime_i$, and let $I_N$ be projectivization of $L^\prime_k$.
Therefore, we get that $I_M$ and $I_N$ are geodesic reflections of ${\rm H^{n}_{\Q}}$  with respect to $M$ and $N$ respectively such that $I_M(N)=N$, $I_N(M)=M$,  $I_M$ and $I_N$ commute.
\end{proof}

\medskip

We remark that $I_M$ acts on $N$ as a geodesic reflection with respect to $S \subset N$, and  $I_N$ acts on $M$ as a geodesic reflection with respect to $S \subset M.$

\subsubsection{Fan decomposition of bisectors in quaternionic hyperbolic space}

In this section, we show that any bisector $B$ in quaternionic hyperbolic space ${\rm H^{n}_{\Q}}$ is the union of totally geodesic submanifolds in ${\rm H^{n}_{\Q}}$
isometric to ${\rm H^{n}_{\C}}$, all passing through a point $o$ in ${\rm H^{n}_{\Q}}$, that is,

$$B=\bigcup_{\alpha}{N_\alpha}(o),$$
where $N_\alpha (o)$ is a totally geodesic submanifolds in ${\rm H^{n}_{\Q}}$
isometric to ${\rm H^{n}_{\C}}$ passing through a point $o$ in ${\rm H^{n}_{\Q}}$.

\medskip

This decomposition in somewhat is similar to the Goldman meridional decomposition
of bisectors in complex hyperbolic space. We call  such decomposition a {\sl fan decomposition} of a bisector in  ${\rm H^{n}_{\Q}}$.

\medskip

We call the point $o  \in {\rm H^{n}_{\Q}}$ the {\sl center} of a fan decomposition of $B$ and the submanifolds $N_\alpha (o)$  the {\sl complex blades}  of a fan decomposition of $B$.

\medskip

A fan decomposition of a bisector $B$ is defined by its center $o$ and its complex blades.
\medskip

First, we define the center and the complex blades of a fan decomposition of a bisector.

\medskip

Let $B$ be a bisector in ${\rm H^{n}_{\Q}}$. Let $\Sigma$ and $\sigma$ be its quaternionic and real spine respectively.

\medskip

The center of a fan decomposition of $B$ is any point $o \in \sigma$.

\medskip

The complex blades of a fan decomposition of $B$  passing through a point $o$ are constructed as follows.

\medskip

Let $\gamma$ be the unique real geodesic in $\Sigma$ passing through the point $o$ orthogonal to $\sigma$. Let $p_1$, $p_2$  be distinct points in $\gamma$
symmetric with respect to $o$. Then it is clear that $B=B(p_1,p_2)$. Let $M$ be a totally geodesic submanifold of ${\rm H^{n}_{\Q}}$ isometric to ${\rm H^{n}_{\C}}$ containing $\gamma$.
Since $M$ is totally geodesic, we have that $B_M = M \cap B$ is a bisector in $M$.
We denote by $\Sigma_M$ the complex spine of $B_M$ and by $\sigma_M$ the real spine of $B_M$.
Then $\Sigma_M$ is a totally geodesic submanifold of $\Sigma$ isometric to
${\rm H^{1}_{\C}}$ containing $\gamma$, and $\sigma_M$ is a real geodesic in $\Sigma_M$ passing through the point $o$ orthogonal to $\gamma$. By applying
Goldman's meridianal decomposition, we have that there exists a totally geodesic submanifold $S \subset B_M $ isometric to ${\rm H^{n}_{\R}}$ containing $\sigma_M$,
$S$ is a meridian of $B_M $. Let us choose any such $S$ and  fix it. It follows from Theorem \ref{ortg1} that there exists a totally geodesic submanifold $N$
isometric to ${\rm H^{n}_{\C}}$ such that $ S=M \cap N$ and $N$ is orthogonal to $M$  along $S$. We define such a submanifold $N$ to be a {\sl complex blade} of a fan decomposition of $B$.

\medskip

It is seen that a complex blade of a fan decomposition of a bisector $B$ centered at $o$ is defined by $o$, by a totally geodesic submanifold $M$ of ${\rm H^{n}_{\Q}}$ isometric to ${\rm H^{n}_{\C}}$ containing $\gamma$,
and by a meridian of the bisector $B_M $.

\medskip

We will need the following fact proved in \cite{AG}.

\begin{prop} 
Let $p=(p_1, p_2, p_3)$ be an ordered triple of distinct points, $p_i \in \P\Q^{n}$. Then there exists a projective submanifold $W \subset \P\Q^{n}$ of complex type of complex dimension $2$ passing through the points $p_i$, that is, $p_i \in W$, $i=1,2,3.$ Moreover, this submanifold $W$ can be chosen, up to the action of ${\rm PU}(n,1; \Q)$, to be the canonical complex submanifold $ \P\C^{2} \subset \P\Q^{n}$.
\end{prop}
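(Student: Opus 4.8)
The plan is to reduce the whole statement to the problem of moving the three points, by a single element of ${\rm PU}(n,1;\Q)$, into a common totally geodesic submanifold of complex type of maximal dimension. Suppose for a moment that we have produced $g\in{\rm PU}(n,1;\Q)$ and an imaginary unit $t$ with $g(p_{i})\in{\rm M}^{n}(\C(t))$ for $i=1,2,3$. Since any two submanifolds of complex type of maximal dimension are globally equivalent under ${\rm PU}(n,1;\Q)$ (Section~1), after composing $g$ with a further isometry we may assume $t=i$, so that $g(p_{1}),g(p_{2}),g(p_{3})\in\P\C^{n}=\pi(\C^{n+1})$. Lifting these to vectors in $\C^{n+1}$, their $\C$-linear span has complex dimension at most $3$; enlarging it to a $3$-dimensional $\C$-subspace $U\subset\C^{n+1}$, its projectivization $\pi(U)$ is, by the definitions of Section~1, a canonical projective submanifold of complex type of complex dimension $2$, i.e. a $\P\C^{2}$, containing $g(p_{1}),g(p_{2}),g(p_{3})$. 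Then $W=g^{-1}(\pi(U))$ is a projective submanifold of complex type of complex dimension $2$ through $p_{1},p_{2},p_{3}$, and $g(W)=\pi(U)$ is the canonical $\P\C^{2}$. So everything comes down to producing the element $g$.

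To produce $g$, fix representatives $v_{1},v_{2},v_{3}\in\Q^{n,1}$ of the points. The key point is that, after rescaling the $v_{i}$ on the right (which does not move $p_{i}=\pi(v_{i})$), all entries of the Gram matrix $G=(\herm{v_{i},v_{j}})_{i,j}$ can be brought into one and the same subfield $\C(t)\subset\Q$: the diagonal entries $\herm{v_{i},v_{i}}$ are already real; a right rescaling of $v_{2}$, and then one of $v_{3}$, makes $\herm{v_{1},v_{2}}$ and $\herm{v_{1},v_{3}}$ non-negative real numbers (each, unless it is already zero); the only non-real entry left is then $\herm{v_{2},v_{3}}$, and we take $t$ to be the direction of its imaginary part. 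If instead $v_{3}$ is a $\Q$-combination of $v_{1}$ and $v_{2}$ — the case of three collinear points — one first rescales $v_{1},v_{2}$ so that $v_{3}=v_{1}+v_{2}$, after which only a common right rescaling of $v_{1},v_{2},v_{3}$ preserves this relation; conjugating $\herm{v_{1},v_{2}}$ by the scalar of that common rescaling still lets us place it, together with the real diagonal, into a common $\C(t)$ (one may take $t=i$ here).

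Once $G$ has entries in the commutative field $\C(t)$, its $\C(t)$-signature coincides with the signature of the quaternionic Hermitian form it defines on ${\rm span}_{\Q}(v_{1},v_{2},v_{3})$ (a $\C(t)$-change of basis simultaneously diagonalises both), and the latter form has at most one negative term, being a subform of the form of signature $(n,1)$ on $\Q^{n,1}$. Since $\C^{n+1}(t)$ carries a $\C(t)$-Hermitian form of signature $(n,1)$, there exist vectors $u_{1},u_{2},u_{3}\in\C^{n+1}(t)$ with $\herm{u_{i},u_{j}}=\herm{v_{i},v_{j}}$ for all $i,j$ — the only conceivable obstruction, that $G$ be positive definite of rank $3$, is vacuous for $n\geq 3$ and impossible when $n=2$ — and in the collinear case one takes $u_{3}=u_{1}+u_{2}$. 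The correspondence $v_{i}\mapsto u_{i}$ extends $\Q$-linearly to a linear isometry of ${\rm span}_{\Q}(v_{1},v_{2},v_{3})$ into $\Q^{n,1}$, which by the Witt-type extension proposition of Section~1 is the restriction of some $A\in{\rm U}(n,1;\Q)$. Putting $g=\pi(A)$ we get $g(p_{i})=\pi(Av_{i})=\pi(u_{i})\in{\rm M}^{n}(\C(t))$, as needed.

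The main obstacle, and the only place where genuine care is needed, is the degenerate case: bookkeeping the right rescalings when the three points are collinear, where the freedom to normalise $G$ shrinks to a single common scalar, and — in parallel — checking the realisability of $G$ inside $\C^{n+1}(t)$ in the low-dimensional case $n=2$. Everything else (the reduction in the first paragraph, the simultaneous diagonalisation, and the appeal to the Witt extension theorem) is formal.
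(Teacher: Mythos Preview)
The paper does not supply its own proof of this proposition: it is quoted from the preprint~\cite{AG} without argument, so there is no proof here to compare your attempt against directly.

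That said, your approach --- rescaling lifts $v_{1},v_{2},v_{3}$ so that their Gram matrix has all entries in a single subfield $\C(t)\subset\Q$, realising that Gram matrix by vectors $u_{1},u_{2},u_{3}\in\C^{n+1}(t)$ (using that $\C^{n+1}(t)$ and $\Q^{n,1}$ carry Hermitian forms of the same signature $(n,1)$), and then invoking the Witt extension proposition of Section~\ref{tgsm} to obtain an element of ${\rm U}(n,1;\Q)$ carrying the $v_{i}$ to the $u_{i}$ --- is the natural and correct one, and is almost certainly what \cite{AG} does as well. Two small remarks. First, your $W=g^{-1}(\pi(U))$ is, under the coordinate-dependent definition of Section~\ref{tgsm}, only the ${\rm PU}(n,1;\Q)$-image of a canonical $\P\C^{2}$, not literally the projectivization of a $\C(a)$-subspace of some fixed $\C^{n+1}(a)$; but this is precisely what the ``moreover'' clause asserts and exactly what the application in the proof of Theorem~\ref{fan1} requires, so no harm is done. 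Second, your assertion that a positive-definite $G$ of rank~$3$ is ``the only conceivable obstruction'' to realisability in $\C^{n+1}(t)$ skips over the degenerate-rank cases; however, those are all excluded by the same comparison with the ambient signature $(n,1)$ (e.g.\ a null vector orthogonal to a $(1,1)$-plane cannot occur among the $v_{i}$ either), so the argument goes through.
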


\begin{theorem}\label{fan1} Let $B$ and $o \in \sigma$ as above. Then $B$ is the union of its complex blades passing through the point $o$.
\end{theorem}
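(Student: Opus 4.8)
The plan is to show the two inclusions $B \supseteq \bigcup_o N$ and $B \subseteq \bigcup_o N$, where the union runs over all complex blades $N$ of the fan decomposition centered at $o$.

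First I would dispose of the easy inclusion. Fix a complex blade $N$ obtained from a choice of totally geodesic $M \cong {\rm H^{n}_{\C}}$ containing $\gamma$ and a meridian $S$ of $B_M$. By construction $S \subset B_M \subset B$, and by Theorem~\ref{ortg1} we have $S = M \cap N$ with $N$ orthogonal to $M$ along $S$. I claim $N \subseteq B$. Since $B = B(p_1,p_2)$ with $p_1,p_2 \in \gamma \subset M$ symmetric about $o \in \sigma_M \subset S$, and since $I_N$ (the geodesic reflection in $N$) restricts on $M$ to the geodesic reflection in $S \subset M$ — this is exactly the remark following Theorem~\ref{ortg1} — we get $I_N(\gamma) = \gamma$ and $I_N(p_1) = p_2$ (because $\gamma$ is the unique real geodesic in $M$ through $o$ orthogonal to $S$, hence is carried to itself, and the reflection in $S$ swaps the two points of $\gamma$ symmetric about $o$). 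Therefore $I_N$ swaps $p_1$ and $p_2$, so it preserves $B(p_1,p_2) = B$; and its fixed-point set $N$ is visibly contained in the equidistant locus of $p_1$ and $I_N(p_1) = p_2$, i.e. $N \subseteq B$. This gives $\bigcup N \subseteq B$.

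For the reverse inclusion, take an arbitrary point $q \in B$; I must produce a complex blade through $o$ that contains $q$. The natural choice of the auxiliary submanifold $M$ is dictated by $q$: apply the Proposition of \cite{AG} to the triple $(p_1, p_2, q)$ (or more conveniently $(o, \gamma\text{-endpoint}, q)$, whichever lies in general position) to get a projective submanifold $W$ of complex type of complex dimension $2$ through all three points. Enlarge $W$ to a totally geodesic $M \cong {\rm H^{n}_{\C}}$ (a canonical complex hyperbolic submanifold) containing $\gamma$ and $q$ — this is possible since $W$ already contains $\gamma$ (it passes through $p_1,p_2$, hence through the quaternionic, in fact real, geodesic they span, which is $\gamma$) and $q$. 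Now $q \in M$ and $q \in B$, so $q \in M \cap B = B_M$, a bisector in the complex hyperbolic space $M$ with complex spine $\Sigma_M$ and real spine $\sigma_M \ni o$. By Goldman's meridional decomposition (Theorem~\ref{bcomp1}) applied inside $M \cong {\rm H^{n}_{\C}}$, there is a meridian $S \cong {\rm H^{n}_{\R}}$ of $B_M$ containing $\sigma_M$ and passing through $q$. Feeding this particular $S$ into the blade construction and invoking Theorem~\ref{ortg1} yields a complex blade $N$ with $S = M \cap N$; since $q \in S \subseteq N$, we are done.

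The main obstacle is the compatibility of the auxiliary submanifold $M$ with the data defining the bisector: I need $M \cong {\rm H^{n}_{\C}}$ to contain $\gamma$ (so that $B_M$ is a genuine bisector in $M$ with real spine through $o$) \emph{and} to contain the prescribed boundary point $q$, while still being a totally geodesic submanifold of complex type of full dimension $n$. The Proposition of \cite{AG} gives only a complex-type submanifold of complex dimension $2$ through three points; the careful step is to check that this can be extended to a full ${\rm H^{n}_{\C}}$ inside ${\rm H^{n}_{\Q}}$ without losing either $\gamma$ or $q$ — equivalently, that the $\C(a)$-span of the relevant vectors in $\Q^{n,1}$, together with enough further vectors, remains an $n$-dimensional $\C(a)$-hyperbolic subspace. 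Once $M$ is in hand, everything else is an application of the already-established Mostow and Goldman decompositions inside the complex hyperbolic slice $M$ together with the orthogonality result Theorem~\ref{ortg1}; there is no further quaternionic subtlety.
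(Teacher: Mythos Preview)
Your proof is correct and follows essentially the same route as the paper's: both inclusions are argued identically, using the reflection $I_N$ (which swaps $p_1,p_2$) to show each blade $N\subset B$, and then, for a given $p\in B$, invoking the proposition from \cite{AG} to place $p,p_1,p_2$ in a copy of ${\rm H^{n}_{\C}}$, applying Goldman's meridional decomposition there, and finishing with Theorem~\ref{ortg1}. The extension ``obstacle'' you flag is handled in the paper exactly as you outline --- Proposition~2.3 allows one to take the three points inside the \emph{canonical} $\P\C^{2}\subset\P\C^{n}$, and the ambient canonical ${\rm H^{n}_{\C}}$ is then the required $M$; no further work is needed.
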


\begin{proof} Let $\gamma$ be the unique real geodesic in $\Sigma$ passing through the point $o$ orthogonal to $\sigma$. Let $p_1$, $p_2$  be distinct points in $\gamma$
symmetric with respect to $o$. Then it is clear that $B=B(p_1,p_2).$

\medskip
Let $M$ be a totally geodesic submanifold of ${\rm H^{n}_{\Q}}$ isometric to ${\rm H^{n}_{\C}}$ containing $\gamma$. Then $B_M = M \cap B$ is a bisector in $M$.
We denote by $\Sigma_M$ the complex spine of $B_M$ and by $\sigma_M$ the real spine of $B_M$. Then $\Sigma_M$ is a totally geodesic submanifold of $\Sigma$ isometric to
${\rm H^{1}_{\C}}$ containing $\gamma$, and $\sigma_M$ is a real geodesic in $\Sigma_M$ passing through the point $o$ orthogonal to $\gamma$. By applying
Goldman's meridianal decomposition, we have that there exists a totally geodesic submanifold $S \subset B_M $ isometric to ${\rm H^{n}_{\R}}$ containing $\sigma_M$,
$S$ is a meridian of $B_M $. Let us choose any such $S$ and  fix it. Let $N$ be a complex blade defined by $M$ and $S$, that is, $S=M \cap N$ and $N$ is orthogonal to $M$  along $S$.

\medskip
Let $I_N$ denote geodesic reflection of ${\rm H^{n}_{\Q}}$ with respect to $N$. Then it follows from Theorem \ref{ortg1} and Theorem 2.14 that $M$ is invariant with respect to $I_N$, and, moreover,
$I_N(p_1)=p_2$, $I_N(p_2)=p_1$.

\medskip
We first show that $N \subset B$. It suffices to show that for any $p \in N$ we have that $d(p,p_1)=d(p,p_2).$ But this is clear because

$$d(p,p_1)= d(I_N(p), I_N(p_1))= d(p,p_2).$$

\medskip

Conversely, we will show that each $p \in B$ lies in a complex blade containing the point $o$.

\medskip

Let $p \in B$. Then it follows from Proposition 2.3 that  there exists a totally geodesic submanifold $K$ of ${\rm H^{n}_{\Q}}$ isometric to ${\rm H^{n}_{\C}}$ passing through the points $p$, $p_1$, $p_2$.
Then $B_K = K \cap B$ is a bisector in $K$. Since $p \in B$, it follows that $p \in B_K$. By applying
Goldman's meridianal decomposition, we have that there exists a meridian $S_K$ of $B_K$ containing $p$.  By applying Theorem \ref{ortg1}, we get there exists a totally geodesic submanifold  $L$ of ${\rm H^{n}_{\Q}}$ isometric to ${\rm H^{n}_{\C}}$
orthogonal to $K$ along $S_K$, that is, $L$ is a complex blade containing $p$. It follows from the above that $L \subset B$. Since, by construction, $p \in L$, the result follows.

\medskip

It is seen that two distinct complex blades intersect at the point $o$.
\end{proof}

\medskip

As a corollary of this theorem we have the following:

\begin{theorem} 
Bisectors in quaternionic  hyperbolic space  ${\rm H^{n}_{\Q}}$ are star-like with respect to any point in its real spine.
\end{theorem}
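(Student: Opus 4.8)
The statement to be proven is that any bisector $B$ in ${\rm H^{n}_{\Q}}$ is starlike with respect to any point $o$ in its real spine $\sigma$. Recall that a set is starlike with respect to $o$ if for every $p \in B$, the geodesic segment joining $o$ to $p$ lies entirely in $B$. The plan is to deduce this directly from the fan decomposition established in Theorem \ref{fan1}. Fix $o \in \sigma$ and let $p \in B$ be arbitrary. By Theorem \ref{fan1}, $B$ is the union of its complex blades centered at $o$, so there is a complex blade $N$ — a totally geodesic submanifold isometric to ${\rm H^{n}_{\C}}$ — with $o \in N$ and $p \in N$.

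The key observation is that a totally geodesic submanifold is geodesically convex: since $N$ is totally geodesic in ${\rm H^{n}_{\Q}}$, the unique geodesic segment of ${\rm H^{n}_{\Q}}$ joining the two points $o, p \in N$ coincides with the geodesic segment of $N$ joining them, and in particular lies in $N$. (Here one uses that ${\rm H^{n}_{\Q}}$ is a Hadamard manifold, so geodesic segments between two points are unique, together with the defining property of totally geodesic submanifolds that geodesics of the submanifold are geodesics of the ambient space.) Since $N \subset B$ by the first half of the proof of Theorem \ref{fan1}, the segment $[o,p]$ lies in $B$. As $p \in B$ was arbitrary, $B$ is starlike with respect to $o$; and as $o \in \sigma$ was arbitrary, $B$ is starlike with respect to any point of its real spine.

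I do not anticipate a serious obstacle here: the entire content has been front-loaded into Theorem \ref{fan1}, and what remains is the elementary geometric fact that totally geodesic submanifolds of a Hadamard manifold are convex. The only point requiring minor care is making explicit that $o$, being the center of a fan decomposition, does lie on every complex blade (this is built into the construction of the blades, since each blade contains the submanifold $S$ which in turn contains $\sigma_M \ni o$), so that the segment $[o,p]$ genuinely has both endpoints in a single blade $N$. With that in hand the argument is immediate.
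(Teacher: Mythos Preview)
Your proposal is correct and follows essentially the same approach as the paper: apply Theorem~\ref{fan1} to place $o$ and an arbitrary $p\in B$ in a common complex blade $N\subset B$, then use that $N$ is totally geodesic to conclude the geodesic segment $[o,p]$ lies in $N\subset B$. Your write-up is in fact a bit more careful than the paper's, spelling out why $o$ lies on every blade and why totally geodesic submanifolds of a Hadamard manifold are convex.
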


\begin{proof} 
Let $B$ be a bisector in ${\rm H^{n}_{\Q}}$ and $o$ be a  point in $\sigma$. Let $p$ be an arbitrary point in $B$. By applying Theorem \ref{fan1}, we have that
there exists a totally geodesic submanifold $M \subset B$ isometric to  ${\rm H^{n}_{\C}}$ containing $o$ and $p$. This implies that the geodesic passing through $o$ and $p$
lies in $B$.
\end{proof}


\section{Orthogonal projection onto totally geodesic submanifolds of complex type of maximal dimension and complex hyperbolic packs}
 
In this section, we obtain a formula for the orthogonal projection onto totally geodesic submanifolds of ${\rm H^{n}_{\Q}}$ isometric to ${\rm H^{n}_{\C}}$. Using this formula we will show that the inverse image of a bisector with respect to this projection is not a bisector in ${\rm H^{n}_{\Q}}.$ 

\subsection{Orthogonal projection onto totally geodesic submanifolds of complex type of maximal dimension}

\begin{lemma}
Let $N$ be a totally geodesic reflective submanifold of a hyperbolic space $M$. Let $\phi: M\to M$ be the geodesic reflection of $M$ with respect to $N$, and \ $\Pi:M\to N$ be the orthogonal projection of $M$ onto $N$. Then $\Pi(p)$ is the midpoint of the geodesic segment defined by $p$ and $\phi(p)$ for all $p\in M\setminus N.$ 
\end{lemma}
\begin{proof}
Given $p\in M \setminus N$, let $\xi\in T_{\Pi(p)}M$ be a unitary vector normal to $N$ at $\Pi(p)$, such that $\gamma(s)=\exp_{\Pi(p)}(s\xi)$, $s\in\R,$ is the unique geodesic in $M$ passing through $p$ and $\Pi(p)$. Then
$$\gamma(-s)=\exp_{\Pi(p)}(-s\xi)=\exp_{\phi(\Pi(p))}(d\phi_{\Pi(p)}(s\xi))=\phi(\exp_{\Pi(p)}(s\xi))=\phi(\gamma(s)),$$
for all $s\in\R$.
\end{proof}

\medskip

\begin{rmk} It is easy to see that $\Pi(p)=m$ is the unique fixed point of the composition
$\phi$ and $\psi$, where $\psi$ is the geodesic reflection of $M$ with respect to the reflective submanifold  $N^\bot_m$ orthogonal to $N$ at $m$.

\end{rmk}

In what follows, we use the ball model and the notations in Section 2.2.4, see also Theorem 2.14. That is,

\medskip

$$\D=\{x= (x_1,x_2, \ldots, x_n) \in \Q^n :  |x_1|^2 + |x_2|^2 + \ldots + |x_n|^2 < 1   \},$$

\medskip

$$\D_\C=\{z= (z_1,z_2, \ldots, z_n) \in \C^n :  |z_1|^2 + |z_2|^2 + \ldots + |z_n|^2 < 1   \}.$$

\medskip

It is clear that for each $x\in\Q^n$ there exist unique vectors $u$, $v\in\C^n$ such that
$$x=u + v j.$$

\medskip

Using this, we identify $\D_\C$ with $\D_u$:

$$\D_u= \{u=(u_1,u_2, \ldots, u_n) \in \C^n :  |u_1|^2 + |u_2|^2 + \ldots + |u_n|^2 < 1   \}.$$

\medskip

We define 
$$\hat{x}:=u - v j.$$

\medskip

\begin{rmk}
It follows from Section 1.2.5 that the map $x\mapsto\hat{x}$, $x\in\D,$ coincides with the geodesic reflection $L_i$ with respect to $\D_{\C} = \D_u.$
\end{rmk}

\begin{lemma}
Let $\R_{-}$ be the set of negative real numbers, and let $p\in\Q \setminus\R_{-}.$ Then 
$\lambda^2=p$ for $\lambda\in\Q$ if and only if
$$\lambda=\pm\left(\sqrt{\frac{|p|+\re p}{2}~}+\sqrt{\frac{|p|-\re p}{2}~}\,\omega\right),$$
where $\omega=0$ if $p\in\R \setminus\R_{-}$ or $\omega=\frac{\im p}{|\im p|}$ otherwise. 
\end{lemma}
\begin{proof}
The proof follows along the lines in \cite{Alf}, Section 1.2. On the other hand, the reader can verify the result by calculating $\lambda^2$ directly for $\lambda$ given in the lemma.
It is understood that all square roots of positive numbers are 
taken with the positive sign. 
\end{proof}

\medskip

Let $M$ be a non-isotropic vector in $\Q^{n,1}$, that is, $\herm{M,M} \neq 0.$ Since $\herm{M,M}$ is real, the following map

$$I=I(M): Z \rightarrow -Z + 2 M \frac{\herm{M,Z}}{\herm{M,M}},$$
where $Z \in \Q^{n,1}$, is well-defined.

\medskip

It is easy to see  that $I: \Q^{n,1}  \rightarrow   \Q^{n,1}$ is a right linear map, that is,

$$I(Z_1 \lambda_1 + Z_2 \lambda_2)= I(Z_1) \lambda_1 + I(Z_2)\lambda_2$$
for all $Z_1, Z_2 \in \Q^{n,1}$ and for all $\lambda_1, \lambda_2 \in \Q.$ 

\medskip

A direct computation shows  that for all $X,Y \in  \Q^{n,1}$ we have that $\herm{I(X),I(Y)} =\herm{X,Y}$. Indeed,

\begin{align*}
\herm{I(X),I(Y)}&=\left\langle-X+2M\dfrac{\herm{M,X}}{\herm{M,M}},-Y+2M\dfrac{\herm{M,Y}}{\herm{M,M}}\right\rangle\\
                &=\herm{X,Y}-2\herm{X,M}\dfrac{\herm{M,Y}}{\herm{M,M}}-2\overline{\dfrac{\herm{M,X}}{\herm{M,M}}}\herm{M,Y}
               +4\overline{\dfrac{\herm{M,X}}{\herm{M,M}}}\herm{M,M}\dfrac{\herm{M,Y}}{\herm{M,M}}\\                
              &=\herm{X,Y}-2\herm{X,M}\dfrac{\herm{M,Y}}{\herm{M,M}}-2\dfrac{\herm{X,M}}{\herm{M,M}}\herm{M,Y}  +4\dfrac{\herm{X,M}\herm{M,Y}}{\herm{M,M}} 
                &=\herm{X,Y}.
\end{align*}

\medskip

Also, it is easy to show that the map $I(M)$ is an involution.

\medskip

\begin{lemma} Let $I(X,Y)=I(M)=I$, where $M=X+Y$.  Then $I(X)=Y$ if and only if 

\begin{itemize}
\item $\herm{X,X}=\herm{Y,Y},$
\item $\herm{X,Y}\in\R$.
\end{itemize}
\end{lemma}
\begin{proof} 
Since $I^2=\mathrm{Id}$, we have that $I(X)=Y$ if and only if $I(Y)=X$. Then
\begin{align*}
I(X)=Y &\iff Y=-X+2M\dfrac{\herm{M,X}}{\herm{M,M}}\\
       &\iff M=2M\dfrac{\herm{M,X}}{\herm{M,M}}\\
       &\iff \herm{M,M}=2\herm{M,X}\\
       &\iff \herm{X,X}+\herm{X,Y}+\herm{Y,X}+\herm{Y,Y}=2\herm{X,X}+2\herm{Y,X}\\
       &\iff \herm{X,Y}-\herm{Y,X}=\herm{X,X}-\herm{Y,Y}\\
       &\iff \herm{X,Y}-\overline{\herm{X,Y}}=\herm{X,X}-\herm{Y,Y}\\
       &\iff 2\,\im\herm{X,Y}=\herm{X,X}-\herm{Y,Y}\\
       &\iff \herm{X,Y}\in\R \text{ and } \herm{X,X}=\herm{Y,Y}\\
\end{align*}

\end{proof}
\medskip

Now let $M=X+Y$, where $X$ and $Y$ are negative vectors in $\Q^{n,1}$, such that $\herm{X,Y}<0.$ Then $M$ is negative. Indeed, $M$ is negative if and only if $ \herm{M,M}<0$. We have that 

$$\herm{M,M}=\herm{X,X}+2\re\herm{X,Y}+\herm{Y,Y}.$$

\medskip

\begin{co} 
Let $X$, $Y$, $M$ be vectors in $\Q^{n,1}$ as in  Lemma 3.3. Let $x=\pi(X)$, $y=\pi(Y)$, $m=\pi(M)$ be their projectivizations in ${\rm H^{n}_{\Q}}$. Let $i=i(X,Y)$ denote the projectivization of 
$I(X,Y)$. Then $i=i(X,Y)$ is an isometry of ${\rm H^{n}_{\Q}}$ such that $i(x)=y$, $i(y)=x$ and $i(m)=m$.
\end{co}

\medskip

We call this isometry $i=i(X,Y)$ a {\sl geodesic inversion (reflection)} with respect to the point $m$.

\medskip

\begin{co} Let $x$ and $y$ be two points in  ${\rm H^{n}_{\Q}}$. Then there exists an isometric involution $i=i(x,y)$ of ${\rm H^{n}_{\Q}}$ such that $i(x)=y$ and $i(y)=x$.
\end{co}

\begin{proof} Let $x$ and $y$ be two points in  ${\rm H^{n}_{\Q}}$. Let $X^\prime$ and $Y^\prime$ be negative vectors in 
$\Q^{n,1}$ representing  $x$ and $y$. By multiplying $X^\prime$ and $Y^\prime$ from the right  by suitable elements of $\Q$, we obtain vectors $X,Y \in \Q^{n,1}$ satisfying:

$$\herm{X,X}=\herm{Y,Y},  \ \herm{X,Y}<0. $$

Then the proof follows from Corollary 3.1
\end{proof}

\medskip
It is easy to verify that  the midpoint $m$ of $x$ and $y$ with respect to the Bergman metric is represented by the vector $M$ as in Corollary 3.1, moreover, $m$ is a unique fixed point of the involution $i(x,y)$.

\medskip

Let $z,w$ be two vectors in $\Q^n$. We recall, see Section 1.2.2, that   $\hherm{z,w}$ denotes the standard positive definite Hermitian product:

$$\hherm{z,w}= \overline{z}_1 w_1 + \ldots, + \overline{z}_n w_n.$$

\medskip

\begin{theorem} The orthogonal projection $\Pi_u:\D\to\D_u$ is given by
\begin{equation}
\Pi_u(x)=
\frac{\hat{x}+x}{2}-\frac{\hat{x}-x}{2}~\frac{\im\hherm{\hat{x},x}}{|1-\hherm{\hat{x},x}|+1-\re\hherm{\hat{x},x}},
\end{equation}
for all $x\in\D$.
\end{theorem}
\begin{proof}
Take $x\in\D-\D_u$. If $X=(x,1)^t$, let $\hat{X}=(\hat{x},1)^t$. It is easy to verify that the vectors $X\hat{\lambda}$ and $\hat{X}\lambda$, where 
$$\lambda^2=-\herm{\hat{X},X}=1-\hherm{\hat{x},x}$$ 
satisfy all the conditions of Lemma 3.3, that is, 

\begin{itemize}
\item $X\hat{\lambda}$ and $\hat{X}\lambda$ are negative, 
\item $\herm{X\hat{\lambda},\ \hat{X}\lambda}<0,$
\item $\herm{X\hat{\lambda}, \ X\hat{\lambda}} =\herm{\hat{X}\lambda, \ \hat{X}\lambda}.$
\end{itemize}

\medskip 

Let $Y=X\hat{\lambda}$ +$\hat{X}\lambda.$ Then it follows from Lemma 3.1, Remark 3.1, and Corollary 3.1 that
$$\Pi_u(x)=\mathrm{mid}\,(x,\hat{x})=y,$$
where $y$ is the projectivization of $Y$.

\medskip

Now, set $x=u+v j$ and $\lambda=\alpha+\beta j$, where $u$, $v\in\C^n$ and $\alpha$, $\beta\in\C$. We have that the last coordinate of $Y$ is equal to $\lambda+\hat{\lambda}=2\alpha$. Below we will show that $\alpha\neq 0$. Therefore,

$$y=u+\dfrac{\overline{\beta}}{\alpha}\,v.$$

\medskip

To find $\lambda$, $\alpha$ and $\beta$, note that $\hherm{\hat{x},x}=\hherm{u,u}-\hherm{v,v}+2\hherm{u,v}\,j$. So,
\begin{equation}
\label{lambda}
\re(1-\hherm{\hat{x},x})=1-\hherm{u,u}+\hherm{v,v}>2\hherm{v,v}\geq0,
\end{equation}
because $1>\hherm{x,x}=\hherm{u,u}+\hherm{v,v}$. Then it follows from  Lemma 3.2 that

$$\lambda=\pm\left(\sqrt{\frac{|1-\hherm{\hat{x},x}|+\re (1-\hherm{\hat{x},x})}{2}~}+\sqrt{\frac{|1-\hherm{\hat{x},x}|-\re(1-\hherm{\hat{x},x})}{2}~}\,\omega\right),$$
where $\omega=0$ if $\hherm{u,v}=0$ or $\omega=-\frac{\hherm{u,v}}{|\hherm{u,v}|}\,j$ otherwise. Therefore,

$$\alpha=\pm\sqrt{\frac{|1-\hherm{\hat{x},x}|+\re(1-\hherm{\hat{x},x})}{2}}$$
and

$$\beta=\pm
\begin{cases}
0&,\text{ if } \hherm{u,v}=0\\
-\frac{\hherm{u,v}}{|\hherm{u,v}|}\sqrt{\frac{|1-\hherm{\hat{x},x}|-\re(1-\hherm{\hat{x},x})}{2}}&,\text{ if } \hherm{u,v}\neq 0
\end{cases}.$$

\medskip

We see from (\ref{lambda}) that $\alpha\neq 0.$

\medskip

Then we have

\begin{align*}
\frac{\bar{\beta}}{\alpha}&=
\begin{cases}
0&,\text{ if } \hherm{u,v}=0\\
-\frac{\hherm{v,u}}{|\hherm{u,v}|}\sqrt{\frac{|1-\hherm{\hat{x},x}|-\re(1-\hherm{\hat{x},x})}{|1-\hherm{\hat{x},x}|+\re(1-\hherm{\hat{x},x})}}&,\text{ if } \hherm{u,v}\neq 0
\end{cases}\\
&=
\begin{cases}
0&,\text{ if } \hherm{u,v}=0\\
-\frac{\hherm{v,u}}{|\hherm{u,v}|}\sqrt{\frac{|1-\hherm{\hat{x},x}|-\re(1-\hherm{\hat{x},x})}{|1-\hherm{\hat{x},x}|+\re(1-\hherm{\hat{x},x})}\cdot\frac{|1-\hherm{\hat{x},x}|+\re(1-\hherm{\hat{x},x})}{|1-\hherm{\hat{x},x}|+\re(1-\hherm{\hat{x},x})}}&,\text{ if } \hherm{u,v}\neq 0
\end{cases}\\
&=
\begin{cases}
0&,\text{ if } \hherm{u,v}=0\\
-\frac{\hherm{v,u}}{|\hherm{u,v}|}\frac{|\im(1-\hherm{\hat{x},x}|}{|1-\hherm{\hat{x},x}|+\re(1-\hherm{\hat{x},x})}&,\text{ if } \hherm{u,v}\neq 0
\end{cases}\\
&=
\begin{cases}
0&,\text{ if } \hherm{u,v}=0\\
-\frac{\hherm{v,u}}{|\hherm{u,v}|}\frac{2|\hherm{u,v}|}{|1-\hherm{\hat{x},x}|+\re(1-\hherm{\hat{x},x})}&,\text{ if } \hherm{u,v}\neq 0
\end{cases}\\
&=
\begin{cases}
0&,\text{ if } \hherm{u,v}=0\\
\frac{-2\hherm{v,u}}{|1-\hherm{\hat{x},x}|+\re(1-\hherm{\hat{x},x})}&,\text{ if } \hherm{u,v}\neq 0
\end{cases}\\
&=\frac{-2\hherm{v,u}}{|1-\hherm{\hat{x},x}|+ {\rm Re}(1-\hherm{\hat{x},x})}
\end{align*}
Finally, since $u=\dfrac{\hat{x}+x}{2}$ e $v=\dfrac{\hat{x}-x}{2}\,j$, we get
$$\hherm{u,v}=\frac{1}{4}(-\hherm{\hat{x},x}+\hherm{x,\hat{x}})\,j=-\frac{1}{2}{\rm  Im}\hherm{\hat{x},x}\,j,$$
because $\hherm{\hat{x},\hat{x}}=\hherm{x,x}$. Then
$$-2\hherm{v,u}=\overline{{\rm Im}\hherm{\hat{x},x}\,j}=(-j)(-{\rm Im} \hherm{\hat{x},x})=j\,{\rm Im}\hherm{\hat{x},x}.$$
Therefore,
$$\dfrac{\overline{\beta}}{\alpha}\,v=v\,\dfrac{\overline{\beta}}{\alpha}=\dfrac{\hat{x}-x}{2}\,j\dfrac{j\,{\rm Im}\hherm{\hat{x},x}}{|1-\hherm{\hat{x},x}|+\re(1-\hherm{\hat{x},x})}=-\,\dfrac{\hat{x}-x}{2}\,\dfrac{{\rm Im}\hherm{\hat{x},x}}{|1-\hherm{\hat{x},x}|+1-{\rm Re}\hherm{\hat{x},x}}$$
\end{proof}

\subsection{Complex hyperbolic packs in quaternionic hyperbolic space}

Let $S$ be a totally geodesic submanifold of $\H^n_{\Q}$. Let $\Pi= \Pi_S: \H^n_{\Q}  \to S$
be the orthogonal projection onto  $S$. If $B$ is a bisector in $\H^n_{\Q}$, then examples show that the intersection $B'=S\cap B$ tends to be complicate. There are examples when $B'$ is a bisector in $S$, and there are examples when $B'$ is not even a part of a bisector in $S$. Also, the relation between bisectors in $\H^n_{\Q}$ and their orthogonal projections onto $S$ is not clear. A good situation happens in the Mostow decomposition and its generalization given by Theorem 2.13 where $S$ is isometric to  $\H^k_{\Q}$, $k=1, \ldots ,n-1$.  Next we show that when $S$ is isometric to $\H^n_{\C}$
the situation is bad: there exists a bisector $B \subset\H^n_{\Q}$ such that $B'$ is a bisector in $S$ but $B$ is not equal to $\Pi^{-1}(B')$. 

\medskip

\noindent \textbf{Example}. In what follows, we use the ball model for $\H^n_{\Q}$ and the notations in Section 3.1. For simplicity, we consider the case $n=2$.

\medskip

Let $p_1=(0,-\delta)$ and $p_2=(0,\delta)$, for some $0<\delta<1$. It is easy to see that the bisector $B=B(p_1,p_2)$ in $\D^2$ is equal to $\{x\in\D^2:\re(x_2)=0\}$. Take $S =\D^2_{\C}=\{z\in\D^2:\, z\in\C^2\}$. Also we have that the bisector $B'=B'(p_1,p_2)$ in $S$ is equal to $\{z\in\D^2_{\C}:\re(z_2)=0\}$. Then it follows that $B'=B\cap S$.

\medskip

Let $x=(r+rj,ri+rj)$, for some $0<r<1/2$. An easy computation shows that $x\in B$. If we write $x=u+vj$, then

$$u=(r,ri)\text{  and }v=(r,r).$$ 

Therefore, $\hherm{u,u}=\hherm{v,v}=2r^2$ and $\hherm{u,v}=r^2(1-i)$. Then we get that

$$\hherm{\hat{x},x}=\hherm{u,u}-\hherm{v,v}+2\hherm{u,v}\,j=2r^2(1-i)\,j$$

and

$$\re(1-\hherm{\hat{x},x})=1\text{ and }|1-\hherm{\hat{x},x}|=\sqrt{1+8r^4}.$$

This implies that

$$\frac{\bar{\beta}}{\alpha}=\frac{-2\hherm{v,u}}{|1-\hherm{\hat{x},x}|+\re(1-\hherm{\hat{x},x})}=\frac{-2r^2(1+i)}{\sqrt{1+8r^4}+1}.$$

By applying Theorem 3.1, we get that $\Pi(x)=y =u+\bar{\beta}/\alpha\,v$. Then it follows that $y\not\in B'$ because

$$\re(y_2)=\re(u_2)+\re(\bar{\beta}/\alpha\,v_2)=\re(ir)+\re(\bar{\beta}/\alpha\,r)=$$

$$r\re(\bar{\beta}/\alpha)=\frac{-2r^3}{\sqrt{1+8r^4}+1}\neq0.$$

\medskip

It follows from this example that the inverse image of $B'=B'(p_1,p_2)$ under the orthogonal projection  $\Pi$ onto $S$ is not a bisector in $\D^2$. Since all bisectors in $\H^2_{\C}$ 
are equivalent with respect to the action of the isometry group of $\H^2_{\C}$, we get using simple geometric arguments that this fact is true for any bisector in  $S$. 

\medskip

We remark that a similar situation happens in complex hyperbolic space $\H^2_{\C}$ when $S$ is a totally geodesic submanifold in $\H^2_{\C}$ isometric to $\H^2_{\R}$,  a totally real geodesic submanifold of $\H^2_{\C}$, see Parker-Platis \cite{PP}.

\medskip

Let $S$ be a totally geodesic submanifold of $\H^n_{\Q}$ isometric to $\H^n_{\C}$ and $\Pi$ be the orthogonal projection onto $S$. Let $B$ be a bisector in $S$. Following Parker-Platis \cite{PP}, we call the inverse image $P=\Pi^{-1}(B)$ a {\sl complex hyperbolic pack} in $\H^n_{\Q}$ defined by $B$.

\medskip

Since orthogonal projection $\Pi  :  {\rm H^{n}_{\Q}} \rightarrow S$ is a real analytic fibration, we have:

\begin{co} 
A complex hyperbolic pack $P$  in ${\rm H^{n}_{\Q}}$ is a real analytic hypersurface in ${\rm H^{n}_{\Q}}$ diffeomorphic to $\R^{4n -1}$.
\end{co}

\medskip

We call $S$ a {\sl complex spine} of $P$ and a bisector $B \subset S$ a {\sl spine} of $P$.

\medskip

Let $\Pi :  {\rm H^{n}_{\Q}} \rightarrow S$ be orthogonal projection onto $S$. It follows from the construction of $P$ that the fibers of $\Pi$ are totally
geodesic submanifolds of ${\rm H^{n}_{\Q}}$ isometric to  ${\rm H^{n}_{\C}}$, see Section 2.2.1. We call these fibers {\sl slices} of the pack $P$. It is clear that any two slices of  $P$ are ultraparallel.

\medskip

In \cite{PP}, the authors used the packs in complex hyperbolic geometry to construct fundamental domains (polyhedra) for complex hyperbolic quasi-Fuchsian groups. We hope that complex hyperbolic packs in ${\rm H^{n}_{\Q}}$ introduced here will be useful for constructing fundamental polyhedra for discrete subgroups of isometries of ${\rm H^{n}_{\Q}}.$  As an example, we consider the following construction:

\medskip

\noindent \textbf{Example}. Let $G$ be a discrete group in  ${\rm PU}(n,1; \Q)$ leaving invariant a totally geodesic submanifold $S$ of ${\rm H^{n}_{\Q}}$ isometric to  ${\rm H^{n}_{\C}}$. Let $F$ be a fundamental polyhedron for the action of $G$ in $S$ bounded by bisectors in $S$ (for instance, $F$ is the Dirichlet polyhedron in $S$ with the center at a point $o$ in $S$).
Then, the polyhedron $\tilde{F}$ in ${\rm H^{n}_{\Q}}$ bounded by the complex hyperbolic packs defined by all the bisectors  which form the boundary of $F$ is a fundamental polyhedron for $G$ in ${\rm H^{n}_{\Q}}$.
The proof of this follows from simple geometric arguments using that the orthogonal projection onto $S$ is equivariant with respect to the action of the stabilizer of $S$ in  
${\rm PU}(n,1; \Q)$. It is easy to see that the combinatorial structure of  $\tilde{F}$ is more simple than the combinatorial structure of  the Dirichlet polyhedron with the center at a point $o$ in $S$ for the action of $G$ in  ${\rm H^{n}_{\Q}}$. 

\textbf{Conflict of interest} The authors have no conflict of interest to declare that are relevant to this article.


\end{document}